\documentclass[12pt]{amsart}
\usepackage{amsmath,amsfonts,amssymb,amscd,euscript}
\usepackage{enumerate,url,verbatim,calc,xypic}
\usepackage{amsmath}

\newcommand{\edit}[1]{}

\oddsidemargin  0.3in
\evensidemargin 0.3in
\textwidth6.2in
\textheight 22cm
 \usepackage{latexsym}
 \usepackage{graphics}
 \usepackage{color}

\def\map#1{{\;\buildrel #1 \over \longrightarrow}\;}
\newcommand{\Spec}{\operatorname{Spec}}
\newcommand{\Pic}{\operatorname{Pic}}

\newcommand{\Z}{{\mathbb Z}}
\newcommand{\R}{{\mathbb R}}
\newcommand{\C}{{\mathbb C}}
\newcommand{\cI}{{\mathcal I}}


\newcommand{\cF}{{\mathcal F}}
\newcommand{\cO}{{\mathcal O}}
\newcommand{\ca}{{\mathfrak a}}
\newcommand{\cc}{{\mathfrak c}}
\newcommand{\et}{{\mathrm{et}}}
\newcommand{\nis}{{\mathrm{nis}}}
\newcommand{\zar}{{\mathrm{zar}}}
\newcommand{\red}{{\mathrm{red}}}

\newcommand{\coker}{\mathop{\mathrm{coker}\,}}
\newcommand{\nil}{{\mathrm{nil}}}
\newcommand{\Unil}{{U_\nil}}   
\newcommand{\scA}{{}^{^+}\!\!\!A}

\numberwithin{equation}{section} 
\newtheorem{theorem}[equation]{Theorem}
\newtheorem{corollary}[equation]{Corollary}

\newtheorem{lemma}[equation]{Lemma}
\newtheorem{proposition}[equation]{Proposition}

\newtheorem{question}[equation]{Question}
\theoremstyle{definition}
\newtheorem{definition}[equation]{Definition}

\newtheorem{example}[equation]{Example}
\newtheorem{examples}[equation]{Examples}
\theoremstyle{remark}

\newtheorem{subremark}{Remark}[equation] 
\newtheorem{notation}[equation]{Notation}

\begin{document}
\title{Relative Cartier Divisors and Laurent Polynomial Extensions}
\date{\today}

\author{Vivek Sadhu}
\address{Institute of Mathematical Sciences, 
IV Cross Road, CIT Campus, Taramani, Chennai 600113, Tamil Nadu, India} 
\email{viveksadhu@imsc.res.in, viveksadhu@gmail.com}
\thanks{Sadhu was supported by IMSC, Chennai Postdoctoral Fellowship.}

\author{Charles Weibel}
\address{Math.\ Dept., Rutgers University, New Brunswick, NJ 08901, USA}
\email{weibel@math.rutgers.edu}\urladdr{http://math.rutgers.edu/~weibel}
\thanks{Weibel was supported by NSF grants} 

\begin{abstract}
If $i:A\subset B$ is a commutative ring extension, we show that the group
$\cI(A,B)$ of invertible $A$-submodules of $B$ is contracted
in the sense of Bass, with $L\cI(A,B)=H^0_\et(A,i_*\Z/\Z)$. This gives 
a canonical decomposition for $\cI(A[t,\frac1t],B[t,\frac1t])$.
\end{abstract}

\keywords{Invertible modules, Contracted functor, \'etale sheaf}
\subjclass{13B02, 13F45, 14C22}

\maketitle

\section{Introduction}
Let $A\subset B$ be a ring extension. The group $\cI(A, B)$ of
invertible $A$-submodules of $B$ is related to the Picard groups and
the units groups of $A$ and $B$ by the exact sequence
\[
 1\to  U(A) \to  U(B) \to  \cI(A,B) \to  \Pic A \to  \Pic B. 
\]
(See \cite[\S2]{rs}.) Replacing $A\subset B$ with $A[t]\subset B[t]$ and
$A[t,1/t]\subset B[t,1/t]$ yields similar exact sequences.
Following Bass \cite{bass}, each functor $F$ on rings defines functors 
$NF$ and $LF$ so that $F(A[t])=F(A)\oplus NF(A)$ and for certain
functors like $F=U$ and $\Pic$, called {\it contracted functors},
we even have a natural decomposition
\[
F(A[t,1/t])\cong F(A)\oplus NF(A)\oplus NF(A)\oplus LF(A). 
\]
%
The decompositions of $U(A[t,1/t])$ and $\Pic A[t,1/t]$ are given
in \cite[XII.7.8]{bass} and \cite{wei}.
We can define $N\cI(A,B)$ and $L\cI(A,B)$ in the same way.
Here is our main result. 

\begin{theorem}
Given a commutative ring extension $f:A\subset B$, 
$\cI$ is a contracted functor with $L\cI(A,B)=H^0_\et(\Spec A,f_*\Z/\Z)$.
In particular, there is a natural decomposition
\[
\cI(A[t,1/t],B[t,1/t]) \cong \cI(A,B)\oplus N\cI(A,B)\oplus N\cI(A,B)
\oplus L\cI(A,B),
\]
In addition, 
$
L\cI(A,B)=L\cI(A[t],B[t])=L\cI(A[t,1/t],B[t,1/t]).
$
\end{theorem}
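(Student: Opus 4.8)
The plan is to deduce the last sentence from the identification $L\cI(A,B)=H^0_\et(\Spec A,f_*\Z/\Z)$ just established. Since $A[t]\subset B[t]$ and $A[t,1/t]\subset B[t,1/t]$ are again commutative ring extensions (flatness of $A[t]$, $A[t,1/t]$ over $A$ keeps the inclusions injective), that identification, applied to the base changes $f[t]$ and $f[t,1/t]$ of $f$, yields
\[
L\cI(A[t],B[t])=H^0_\et(\Spec A[t],f[t]_*\Z/\Z),\qquad
L\cI(A[t,\tfrac1t],B[t,\tfrac1t])=H^0_\et(\Spec A[t,\tfrac1t],f[t,\tfrac1t]_*\Z/\Z).
\]
So it suffices to show that the structure maps $\pi\colon\Spec A[t]\to\Spec A$ and $\rho\colon\Spec A[t,1/t]\to\Spec A$ induce isomorphisms on $H^0_\et(-,(\cdot)_*\Z/\Z)$.

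Next I would compare the short exact sequence of \'etale sheaves $0\to\Z\to f_*\Z\to f_*\Z/\Z\to0$ on $\Spec A$ with its counterparts on $\Spec A[t]$ and $\Spec A[t,1/t]$, via the maps induced by $\pi$ (resp.\ $\rho$) together with the canonical comparisons $\pi^*f_*\Z\to f[t]_*\Z$. This produces a ladder of long exact cohomology sequences. In it, the groups $H^0_\et(\Spec R,\Z)$ and $H^0_\et(\Spec R,g_*\Z)=H^0_\et(\Spec(B\otimes_AR),\Z)$ are the groups of locally constant $\Z$-valued functions on the underlying spaces, hence depend only on the Boolean algebra of idempotents; since the idempotents of $R[t]$ and of $R[t,1/t]$ coincide with those of $R$, the first two columns of the ladder are isomorphisms (for $R=A$ and for $R=B$). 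A short diagram chase with the five lemma then reduces everything to the statement that $\pi$ and $\rho$ induce isomorphisms $H^1_\et(\Spec R,\Z)\xrightarrow{\ \sim\ }H^1_\et(\Spec R[t],\Z)$ and $H^1_\et(\Spec R[t,1/t],\Z)$ for $R\in\{A,B\}$.

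This invariance of $H^1_\et(-,\Z)$ under polynomial and Laurent extensions is the crux, and I expect it to be the main obstacle. It is the assertion that $L\Pic$ is unchanged on passing to $R[t]$ and $R[t,1/t]$, which is already part of the theory of \cite{wei}; absent a direct citation I would prove it by reducing, through continuity of \'etale cohomology, to $R$ of finite type over $\Z$, and then treating $\Spec R[t]\to\Spec R$ by $\mathbb A^1$-invariance of \'etale cohomology and $\Spec R[t,1/t]\to\Spec R$ by the $\mathbb G_m$-localization sequence, the point being that the only new class, that of the tautological $\Z$-torsor on $\mathbb G_{m,R}$, is the one responsible for the two $N\cI$-summands and the $LU$-part of $\cI(A[t,1/t],B[t,1/t])$, so it does not enter the ``$L$'' summand.

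An alternative route bypasses the cohomological bookkeeping: the formation of the sheaf $f_*\Z/\Z$ commutes with the base changes $\pi$ and $\rho$ --- a stalkwise assertion, amounting to the fact that adjoining $t$ or $t^{\pm1}$ does not alter the set of connected components of $\Spec(B\otimes_A\cO)$ for a strictly henselian local $A$-algebra $\cO$ --- and $\pi,\rho$ are faithfully flat with geometrically connected fibres, so $\pi_*\pi^*\mathcal G\cong\mathcal G$ for any \'etale sheaf $\mathcal G$ on $\Spec A$ (the unit is split by a section of $\pi$, and the cokernel has vanishing stalks), whence $H^0_\et(\Spec A[t],\pi^*\mathcal G)=H^0_\et(\Spec A,\mathcal G)$; one then applies this with $\mathcal G=f_*\Z/\Z$. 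The delicate point in this approach is the base-change isomorphism $\pi^*f_*\Z\xrightarrow{\ \sim\ }f[t]_*\Z$: smooth base change in \'etale cohomology is customarily stated for torsion coefficients, so I would verify it directly on stalks, where it comes down to the connectedness of the affine line and of $\mathbb G_m$ over a field.
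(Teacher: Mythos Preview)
Your first route is correct but considerably more roundabout than the paper's. The paper establishes the ``In addition'' clause \emph{before} and independently of the \'etale identification: Proposition~\ref{LU-LPic} applies Bass' formal machinery (Corollary~\ref{LF sequence}) directly to the five-term sequence $1\to U(A)\to U(B)\to\cI(f)\to\Pic A\to\Pic B$, using that $U$ and $\Pic$ are contracted with $NLU=LLU=0$ and $NL\Pic=LL\Pic=0$ (the latter being \cite[7.7]{wei}), and reads off $NL\cI=LL\cI=0$. The equalities $L\cI(f)=L\cI(f[t])=L\cI(f[t,1/t])$ are then immediate from acyclicity of $L\cI$ together with $NL\cI=LL\cI=0$. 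Your ladder-and-five-lemma argument is exactly the $L$-version of this same computation, rewritten through the identifications $LU=H^0_\et(-,\Z)$ and $L\Pic=H^1_\et(-,\Z)$; the ``crux'' you isolate, invariance of $H^1_\et(-,\Z)$ under $[t]$ and $[t,1/t]$, is literally $NL\Pic=LL\Pic=0$, and \cite[7.7]{wei} is the direct citation you were missing. So nothing is gained by passing through the sheaf description first.

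Two cautions on the parts where you try to avoid that citation. First, your sketch via $\mathbb A^1$-invariance and a $\Gm$-localization sequence is shaky: those statements are standard only for torsion coefficients, and the aside about a ``tautological $\Z$-torsor on $\mathbb G_{m,R}$'' being responsible for the $N\cI$-summands conflates the unit $t$ (which contributes to $LU$, not to $H^1_\et(-,\Z)$) with something that does not exist. Second, your alternative base-change route hinges on $\pi^*f_*\Z\xrightarrow{\sim}f[t]_*\Z$; for non-finite $f$ the connected components of $\Spec(B\otimes_A\cO^{sh})$ need not match those of $\Spec(B\otimes_A\cO'^{sh})$ for the two strict henselizations in play, so the stalkwise check does not reduce to ``connectedness of $\mathbb A^1$ and $\Gm$ over a field'' as you suggest. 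The paper sidesteps all of this by never leaving the $U$--$\Pic$ sequence.
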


This is proven in Theorem \ref{thm:main} and 
Proposition \ref{LU-LPic} below.
Here $\Z$ is regarded as the constant \'etale sheaf on both
$\Spec A$ and $\Spec B$, and $f_*\Z$ is the direct image sheaf on
$\Spec A$. The group $L\cI(A,B)$ also equals the Nisnevich
cohomology group $H^0_\nis(\Spec A,f_*\Z/\Z)$, but differs from
the Zariski cohomology group $H^0_\zar(\Spec A,f_*\Z/\Z)$; 
see Example \ref{node}.

For convenience, let us write $A[T]$ for $A[t_1,1/t_1,\dots,t_n,1/t_n]$.
As pointed out by Bass \cite{bass}, we can iterate the operations $N$ and $L$
to get decompositions of $\cI(A[T],B[T])$ using components
$N^iL^j\cI(A,B)$ for $1\le i,j\le n$. 
Since our Main Theorem says that $NL\cI=L^2\cI=0$,
most of these terms are unnecessary.

\begin{corollary}
For every ring extension $A\subset B$, 
$\cI(A[T],B[T])$ is the direct sum of $\cI(A,B)$,
$n$ terms of the form $L\cI(A,B)$ and 
$2^i\binom{n}{i}$ terms of the form $N^i\cI(A,B)$, $1\!\le\! i\!\le\! n$.
\end{corollary}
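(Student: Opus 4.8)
The plan is to prove the Corollary by induction on $n$, iterating the fundamental decomposition of contracted functors at the level of functors of the extension $A\subset B$. Write $T'=(t_1,1/t_1,\dots,t_{n-1},1/t_{n-1})$, so that $A[T]=A[T'][t_n,1/t_n]$, and let $\cI_T$ (resp.\ $\cI_{T'}$) denote the functor $(A\subset B)\mapsto\cI(A[T],B[T])$ (resp.\ $(A\subset B)\mapsto\cI(A[T'],B[T'])$). The inductive hypothesis will be the natural isomorphism
\[
\cI_{T'}\;\cong\;\cI\ \oplus\ (n-1)\,L\cI\ \oplus\ \bigoplus_{i=1}^{n-1}2^i\binom{n-1}{i}N^i\cI ,
\]
whose $n=0$ case is trivial. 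Since $\cI$ is contracted by the Main Theorem, so is $\cI_{T'}$ — the class of contracted functors is closed under $N$ and $L$, see \cite{bass} — and the defining decomposition of a contracted functor recalled in the introduction, applied to $\cI_{T'}$ in the variable $t_n$, yields a natural isomorphism $\cI_T\cong\cI_{T'}\oplus N\cI_{T'}\oplus N\cI_{T'}\oplus L\cI_{T'}$.

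It remains to compute $N\cI_{T'}$ and $L\cI_{T'}$. The operations $N$ and $L$ are cokernels, respectively kernels, of split injections, so they commute with finite direct sums of functors and may be applied termwise to the displayed isomorphism. Since $N(N^i\cI)=N^{i+1}\cI$ and, by the Main Theorem, $N(L\cI)=NL\cI=0$ (this is the assertion $L\cI(A,B)=L\cI(A[t],B[t])$), we get
\[
N\cI_{T'}\;\cong\;N\cI\ \oplus\ \bigoplus_{j=2}^{n}2^{j-1}\binom{n-1}{j-1}N^j\cI .
\]
Applying $L$ termwise gives $L\cI_{T'}\cong L\cI\oplus(n-1)L^2\cI\oplus\bigoplus_{i=1}^{n-1}2^i\binom{n-1}{i}LN^i\cI$; here $L^2\cI=0$ (the assertion $L\cI(A,B)=L\cI(A[t,1/t],B[t,1/t])$), and $LN^i\cI\cong N^iL\cI=N^{i-1}(NL\cI)=0$ for $i\ge1$, the isomorphism $LN^i\cI\cong N^iL\cI$ being the commutativity of $N$ and $L$ on contracted functors, which follows by expanding $\cI$ of a ring such as $A[s,1/s][u]$ in the two possible orders and comparing (cf.\ \cite{bass}, Ch.\ XII). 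Hence $L\cI_{T'}\cong L\cI$.

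Substituting into $\cI_T\cong\cI_{T'}\oplus N\cI_{T'}\oplus N\cI_{T'}\oplus L\cI_{T'}$ and collecting like summands: the multiplicity of $\cI$ is $1$; that of $L\cI$ is $(n-1)+1=n$; and that of $N^i\cI$, for $1\le i\le n$, is
\[
2^i\binom{n-1}{i}+2\cdot 2^{i-1}\binom{n-1}{i-1}=2^i\Bigl(\binom{n-1}{i}+\binom{n-1}{i-1}\Bigr)=2^i\binom{n}{i}
\]
by Pascal's rule, the conventions $\binom{n-1}{n}=0$ and $\binom{n-1}{0}=1$ covering the extreme values $i=n$ and $i=1$. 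This closes the induction, and evaluating $\cI_T$ at any extension $A\subset B$ gives the decomposition of $\cI(A[T],B[T])$ asserted in the Corollary. The only conceptual inputs are the Main Theorem's vanishing identities $NL\cI=L^2\cI=0$ and the commutativity $NL\cong LN$ of the contracted operations; accordingly I expect the main obstacle to be organizational — above all, checking that the inductive isomorphism is sufficiently natural that $N$ and $L$ can be pushed through it termwise — rather than any genuinely new difficulty.
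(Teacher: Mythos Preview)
Your proof is correct and follows essentially the same approach as the paper: the paper does not give a detailed argument, merely observing that Bass's iteration of $N$ and $L$ produces a decomposition into components $N^iL^j\cI(A,B)$, and that the vanishing $NL\cI=L^2\cI=0$ from the Main Theorem kills all terms except $\cI$, the $n$ copies of $L\cI$, and the $2^i\binom{n}{i}$ copies of $N^i\cI$. Your explicit induction with Pascal's rule is just this iteration written out in full, and your invocation of $LN\cong NL$ for contracted functors is exactly the ingredient implicit in the paper's appeal to Bass.
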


Since we know from \cite{ss} that $N\cI(A,B)=0$ is equivalent to
$A$ being seminormal in $B$ (Definition \ref{def:sn}) 
we can further conclude:

\begin{corollary}
For $A\subset B$, the following are equivalent:

1) $\cI(A,B)=\cI(A[t,1/t],B[t,1/t])$; 

2) $H_\et^0(\Spec(A),f_*\Z/\Z)=0$
and $A$ is seminormal in $B$.
\end{corollary}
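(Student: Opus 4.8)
The plan is to derive this corollary formally from the Main Theorem together with the quoted result of \cite{ss}. First I would write down the natural decomposition
\[
\cI(A[t,1/t],B[t,1/t]) \cong \cI(A,B)\oplus N\cI(A,B)\oplus N\cI(A,B)\oplus L\cI(A,B)
\]
supplied by the Main Theorem, observing that the first summand is precisely the image of the canonical homomorphism $\cI(A,B)\to\cI(A[t,1/t],B[t,1/t])$ induced by $A[t,1/t]\subset B[t,1/t]$, and that this homomorphism is split injective (this splitting is part of what it means for $\cI$ to be a contracted functor). Hence condition (1) — that this map be an isomorphism — is equivalent to the simultaneous vanishing of the three remaining summands; since $N\cI(A,B)$ occurs twice, this amounts to the single pair of conditions $N\cI(A,B)=0$ and $L\cI(A,B)=0$.

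Next I would translate each vanishing statement. The Main Theorem identifies $L\cI(A,B)$ with $H^0_\et(\Spec A,f_*\Z/\Z)$, so $L\cI(A,B)=0$ is exactly the first clause of (2). The result of \cite{ss} recalled just above the statement says that $N\cI(A,B)=0$ if and only if $A$ is seminormal in $B$, which is the second clause of (2). Combining these two equivalences gives (1) $\Leftrightarrow$ (2).

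I do not expect any genuine obstacle here: the argument is pure bookkeeping, and is the same mechanism that yields the classical criteria for $U(A)=U(A[t,1/t])$ and $\Pic A=\Pic A[t,1/t]$. The one point that deserves an explicit sentence is the identification of the canonical inclusion $\cI(A,B)\hookrightarrow\cI(A[t,1/t],B[t,1/t])$ with the inclusion of the first summand in the Bass-type decomposition, so that ``equality of these two groups'' is literally equivalent to ``all of the other summands vanish''; this is immediate from the definitions of $N\cI$ and $L\cI$. With that in hand, the two translations above complete the proof.
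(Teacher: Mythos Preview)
Your argument is correct and is exactly the one the paper intends: it deduces the corollary directly from the Main Theorem's decomposition $\cI(A[t,1/t],B[t,1/t])\cong\cI(A,B)\oplus N\cI\oplus N\cI\oplus L\cI$ together with the cited result from \cite{ss} that $N\cI(A,B)=0$ iff $A$ is seminormal in $B$. There is nothing to add.
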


It is immediate from our Main Theorem that $L\cI(A,B)$ is a
torsionfree group (we give a simple proof in Corollary \ref{tfree});
it is free abelian of finite rank when $A$ is pseudo-geometric 
and finite dimensional (Proposition \ref{free}).

A secondary goal of this paper is to give simple techniques
for determining $L\cI(A,B)$. For example, we may assume $A$ and $B$
are reduced as $L\cI(A,B)\cong L\cI(A_\red,B_\red)$ (Theorem \ref{LI-red}).
The following special case of Proposition \ref{connected}
gives an elementary criterion for the vanishing of $L\cI(A,B)$. 
We say that an extension $B/A$ is {\it connected} if for every prime ideal 
$\wp$ of $A$, the ring $B_{\wp}/\wp B_\wp$ is connected.

\begin{proposition}
If $B/A$ is finite and $B$ is connected over $A$ then $L\cI(A,B)=0$.
\end{proposition}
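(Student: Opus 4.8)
The plan is to deduce this from the Main Theorem, which identifies $L\cI(A,B)$ with $H^0_\et(\Spec A, f_*\Z/\Z)$, so that everything reduces to showing this \'etale cohomology group is $0$. Two preliminary remarks organize the argument. First, since $A\subset B$ is an inclusion and $B$ is finite over $A$, the morphism $f:\Spec B\to\Spec A$ is closed with dense image, hence surjective; consequently $\Z\to f_*\Z$ is a monomorphism of \'etale sheaves and $f_*\Z/\Z$ is genuinely its cokernel. Second, because $f$ is finite, the functor $f_*$ on \'etale sheaves is exact and its formation commutes with arbitrary base change; in particular, for a prime $\wp$ of $A$ with residue field $k=\kappa(\wp)$ and structure map $g:\Spec k\to\Spec A$, one has $g^*(f_*\Z/\Z)=(f_\wp)_*\Z/\Z$, where $f_\wp:\Spec(B\otimes_A k)\to\Spec k$ is the base change of $f$, and the germ of $f_*\Z/\Z$ at a geometric point is computed from the points of the corresponding geometric fibre.

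Next I would carry out the local computation over each residue field. The fibre $\Spec(B\otimes_A k)$ is a finite $k$-scheme, so the \'etale sheaf $(f_\wp)_*\Z$ on $\Spec k$ is the permutation module $\Z[S_\wp]$, where $G_k=\operatorname{Gal}(k^{\mathrm{sep}}/k)$, $S_\wp$ is the finite set of points of the geometric fibre, and $G_k$ acts by permuting $S_\wp$; the subsheaf $\Z$ corresponds to the diagonal copy of $\Z$. The hypothesis that $B/A$ is connected says precisely that $B_\wp/\wp B_\wp=B\otimes_A k$ is a connected ring, i.e.\ that its reduction is a single finite field extension of $k$, which forces $G_k$ to act \emph{transitively} on $S_\wp$. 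I then claim $(\Z[S_\wp]/\Z)^{G_k}=0$: lifting a $G_k$-invariant class to $\phi\in\Z[S_\wp]$, each difference $g\phi-\phi$ is a constant function on $S_\wp$, and summing its values over the (finite, nonempty) set $S_\wp$ forces that constant to be $0$; hence $\phi$ itself is $G_k$-invariant, so constant by transitivity, so it lies in $\Z$. (Equivalently, $(\Z[S_\wp]/\Z)^{G_k}$ embeds into $H^1(G_k,\Z)=\operatorname{Hom}_{\mathrm{cont}}(G_k,\Z)=0$.)

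To finish, take any $s\in H^0_\et(\Spec A, f_*\Z/\Z)$. For every $\wp$ its restriction $g^*s$ lies in $H^0_\et(\Spec k, (f_\wp)_*\Z/\Z)=(\Z[S_\wp]/\Z)^{G_k}=0$. Since every geometric point of $\Spec A$ factors through some $\Spec\kappa(\wp)$ and the germ of $s$ there is the image of $g^*s$, all germs of $s$ vanish, and therefore $s=0$. Hence $L\cI(A,B)=H^0_\et(\Spec A,f_*\Z/\Z)=0$.

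The point requiring care --- and the only real subtlety --- is that one cannot shortcut this by claiming the germs of $f_*\Z/\Z$ themselves vanish: when a fibre of $f$ is connected but not geometrically connected (as already happens for $\R\subset\C$), the germ of $f_*\Z/\Z$ at a geometric point over $\wp$ is $\Z^{|S_\wp|}/\Z\neq 0$, and the vanishing of global sections is recovered only after restoring the $G_{\kappa(\wp)}$-action and using transitivity together with $H^1(G_{\kappa(\wp)},\Z)=0$. Finiteness of $f$ enters twice: to make each $S_\wp$ finite and to validate the base-change identity for $f_*$. (If desired, one may first reduce to the case of reduced rings via Theorem \ref{LI-red}, but this is not needed, since the \'etale site and the sheaf $f_*\Z/\Z$ are insensitive to nilpotents.)
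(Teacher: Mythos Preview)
Your proof is correct but proceeds differently from the paper. The paper works in the Nisnevich topology: over the henselization $A^h_\wp$, the finite algebra $B\otimes_A A^h_\wp$ splits as a product of hensel local rings, one for each connected component of the fibre $B\otimes_A\kappa(\wp)$; hence when $f$ is connected the \emph{Nisnevich} sheaf $(f_*\Z)/\Z$ itself vanishes, and $L\cI(f)=H^0_\nis(\Spec A,f_*\Z/\Z)=0$ is immediate. You instead stay in the \'etale topology, where (as you correctly note) the sheaf $f_*\Z/\Z$ can have nonzero stalks, and recover the vanishing of $H^0_\et$ by pulling back to each residue field and using transitivity of the Galois action on geometric fibres together with $H^1(G_k,\Z)=0$. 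The paper's route is shorter and also yields the converse (the Nisnevich sheaf vanishes \emph{iff} $f$ is connected, Proposition~\ref{connected}(a)); your route explains directly why the \'etale $H^0$ vanishes despite nonvanishing stalks, and avoids any appeal to henselization.
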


We also show that $L\cI(A,B)=0$ can only happen if the extension
$A\subset B$ is {\it anodal} in the sense of Asanuma
(Theorem \ref{ano}). The converse is true for integral, birational
extensions of 1-dimensional domains, 
but Example \ref{2 dim} (taken from \cite[3.5]{wei}) shows that being
integral, birational and anodal is not sufficient in higher dimension.

\medskip
This paper is laid out as follows.
In Section \ref{sec:prelim}, we define contracted functors on extensions
and recall some basic theory.
In Section \ref{sec:I}, we define $\cI(A,B)$ and prove 
(in Proposition \ref{LU-LPic}) that $NL\cI=LL\cI=0$.
Section \ref{sec:basic} gives some basic properties of $\cI(A,B)$.
The rest of Theorem 0.1 is proven in Section \ref{sec:main},
and Section \ref{sec:vanish} describes some conditions under which
$L\cI(A,B)$ vanishes.

\medskip
\textbf{Acknowledgements}: The first author would like to express his 
sincere thanks to Balwant Singh for many fruitful discussions 
and for his guidance, and to D.\,S.\ Nagaraj for useful discussion. 

\bigskip
\section{Contracted functors}\label{sec:prelim}
All of the rings we consider are commutative with $1$, and all ring
homomorphisms are unitary. The category of ring extensions has
objects $f:A\hookrightarrow B$; a morphism from $f$ to
$f':A'\hookrightarrow B'$ is a morphism $B\to B'$ sending $A$ to $A'$.

In \cite[XII]{bass}, Bass defined the notion of a contracted
functor from rings to abelian groups. This has a natural
translation into the setting of ring extensions, which we
now lay out.
Given an indeterminate $t$, we write $f[t]$ for the polynomial
ring extension $A[t]\hookrightarrow B[t]$ and write $f[t,1/t]$
for the Laurent polynomial extension
$A[t,1/t]\hookrightarrow B[t,1/t]$.

\begin{definition}\label{def:contracted}
Let $F$ be a functor from ring extensions to abelian groups.
We write $LF(A,B)$ or $LF(f)$ for the cokernel of the
map $F(f[t])\times F(f[1/t])\map{\pm} F(f[t,1/t])$
which is the difference of the maps induced by applying $F$
to the morphisms $f\to f[t]$ and $f\to f[1/t]$.
We write $Seq(F,f)$ for the following sequence (where
$\Delta$ is the diagonal map):
\[
1 \to F(f) \map{\Delta} F(f[t])\times F(f[1/t])\map{\pm} F(f[t,1/t])
\to LF(f)\to 1.
\]
We say $F$ is \emph{acyclic} if $Seq(F,f)$ is exact for every
ring extension $f$. We say that $F$ is \emph{contracted}
if $Seq(F,f)$ is naturally split exact, i.e., if there is a
map $h(f): LF(f)\to F(f[t,1/t])$.
\end{definition}

Following Bass \cite[XII]{bass}, we write $NF(f)$ or $N_tF(f)$
for the kernel of the map $F(f[t])\to F(f)$ induced by $t\mapsto1$.
This map is split by the map $F(f\to f[t])$ induced by 
$B\subset B[t]$, and we have a natural decomposition
$F(f[t])=F(f)\oplus NF(f)$. Thus $Seq(F,f)$ is quasi-isomorphic
to the sequence
\[
1 \to F(f)\oplus N_tF(f)\oplus N_{1/t}F(f) \to F(f[t,1/t])\to LF(f)\to1.
\]

If $F$ is a functor from rings to abelian groups, we can define
functors $s^*F(f)=F(A)$ and $t^*F(f)=F(B)$ by composing with the
source and target functors from ring extensions to rings sending
$f:A\hookrightarrow B$ to $s(f)=A$ and $t(f)=B$. If $F$ is contracted
in Bass' sense then $s^*F$ and $t^*F$ are contracted in the
sense of Definition \ref{def:contracted}.

It should be clear to the reader that the notion of contracted
functor also makes sense for functors from many categories (such as
commutative rings, schemes, ring extensions, ...) to any abelian
category (such as abelian groups, sheaves, modules).  When these
choices are irrelevant, we will not specify them and merely refer
to ``contracted functors.''

\begin{lemma}\label{mor}
Let $\eta:F\to G$ be a morphism of contracted functors.
Then $\ker(\eta)$ and $\coker(\eta)$ are contracted functors,
with $L\ker(\eta)=\ker(L\eta)$ and $L\coker(\eta)=\coker(L\eta)$.

If\ $1\to F\to G\to H\to1$ is a short exact sequence of functors
and $F$, $H$ are acyclic then $G$ is acyclic and there
is a short exact sequence
\[
1 \to LF \to LG\to LH \to1.
\]
\end{lemma}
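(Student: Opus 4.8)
The plan is to reduce both assertions to a single application of the snake lemma. For a functor $X$ on ring extensions and a fixed $f$, write $\phi_X(f)$ for the homomorphism
\[
\phi_X(f)\colon\ X(f[t])\times X(f[1/t])\map{\pm}X(f[t,1/t]),
\]
so that $LX(f)=\coker\phi_X(f)$. Since each of these three groups is $X$ composed with an exact evaluation functor and $\pm$ is natural, the rule $X\mapsto\phi_X(f)$ is an exact functor into the (abelian) category of homomorphisms of abelian groups; hence the snake lemma turns any short exact sequence $1\to F\to G\to H\to1$ of functors into a natural exact sequence
\[
1\to\ker\phi_F(f)\to\ker\phi_G(f)\to\ker\phi_H(f)\to LF(f)\to LG(f)\to LH(f)\to1. \qquad(\star)
\]
I will use two elementary observations: (i) for every functor $X$ the map $X(f)\to X(f[t])$ is split by $t\mapsto1$, hence $\Delta\colon X(f)\to\ker\phi_X(f)$ is injective; and (ii) $X$ is acyclic precisely when this $\Delta$ is an isomorphism for all $f$, the remaining exactness in $Seq(X,f)$ being automatic.

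For the first statement put $I=\im\eta$, $K=\ker\eta$, $C=\coker\eta$, and apply $(\star)$ to $1\to I\to G\to C\to1$ and to $1\to K\to F\to I\to1$. In the first, $\ker\phi_G(f)=G(f)$ since $G$ is contracted, and $G(f)\to\ker\phi_C(f)$ is the surjection $G(f)\to C(f)$ followed by the injection $\Delta\colon C(f)\hookrightarrow\ker\phi_C(f)$; taking kernels gives $\ker\phi_I(f)=\ker(G(f)\to C(f))=I(f)$, so $I$ is acyclic. In the second $\ker\phi_F(f)=F(f)$ and $F(f)\to I(f)$ is onto, so $\ker\phi_K(f)=\ker(F(f)\to I(f))=K(f)$, so $K$ is acyclic; and as $1\to K(f)\to F(f)\to I(f)\to1$ is exact, $(\star)$ yields $1\to LK\to LF\to LI\to1$. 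Two points remain: (a) $LI\to LG$ is injective --- whence, by $(\star)$ for $1\to I\to G\to C\to1$, $C$ is acyclic, and from the factorization $L\eta\colon LF\twoheadrightarrow LI\hookrightarrow LG$ one reads off $LK=\ker(L\eta)$ and $LC=\coker(L\eta)$; and (b) $Seq(K,f)$ and $Seq(C,f)$ admit natural splittings. I expect (a) and (b) to be the real obstacle: here one needs that $\eta$, being a morphism of \emph{contracted} functors, is compatible with the splittings, i.e. $\eta\circ h_F=h_G\circ L\eta$. Granting this: if $y\in I(f[t,1/t])\cap\im\phi_G(f)$, write $y=\eta(w)$; then $[w]\in\ker(L\eta)$, so $\eta(h_F[w])=h_G(L\eta[w])=0$, whence $y=\eta(w-h_F[w])\in\eta(\im\phi_F(f))=\im\phi_I(f)$, which is the injectivity in (a). The same identity shows $h_F$ maps $\ker(L\eta)=LK\subseteq LF$ into $K(f[t,1/t])$, and that $h_G$ descends along $G(f[t,1/t])\to C(f[t,1/t])$ to a natural section $\coker(L\eta)=LC(f)\to C(f[t,1/t])$ --- the splittings of (b). Without this compatibility one is left with only the right-exact fragment $LF\to LG\to LH\to1$.

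For the second statement apply $(\star)$ to $1\to F\to G\to H\to1$ itself. By (ii), $\ker\phi_F(f)=F(f)$ and $\ker\phi_H(f)=H(f)$, so $(\star)$ reads $1\to F(f)\to\ker\phi_G(f)\to H(f)\to LF(f)\to LG(f)\to LH(f)\to1$. The evaluated sequence $1\to F(f)\to G(f)\to H(f)\to1$ maps into this by the identity on the ends and $\Delta\colon G(f)\to\ker\phi_G(f)$ in the middle, and the composite $G(f)\to\ker\phi_G(f)\to H(f)$ is the original surjection; hence $\ker\phi_G(f)\to H(f)$ is onto, the connecting map $H(f)\to LF(f)$ vanishes, and the five lemma makes $\Delta\colon G(f)\to\ker\phi_G(f)$ an isomorphism. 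The first fact gives the exact sequence $1\to LF\to LG\to LH\to1$, and the second says $G$ is acyclic.
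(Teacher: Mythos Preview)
Your argument is correct and amounts to a detailed working-out of the proofs the paper merely cites (Bass \cite[XII.7.2]{bass} and \cite[III.4.2]{WK} for the first assertion, Carter \cite[1.2]{carter} for the second); the snake-lemma framework you set up is exactly the standard one used there. Your flagged concern about the identity $\eta\circ h_F = h_G\circ L\eta$ is well-placed but not a gap: in Bass's formalism a contracted functor carries its section as part of the data, and a \emph{morphism of contracted functors} is by definition a natural transformation compatible with the sections, so this identity is built into the hypothesis rather than something to be verified. With that understood, your arguments for (a) and (b) go through verbatim, and your treatment of the second assertion via the five lemma is clean and matches Carter's.
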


\begin{proof}
The first assertion is proven exactly as the corresponding
assertion for contracted functors on rings in \cite[XII.7.2]{bass} 
or \cite[III.4.2]{WK}. The second assertion is proven exactly
as Carter proved the corresponding assertion in \cite[1.2]{carter}.
\end{proof}

\begin{corollary}\label{LF sequence}
If $F_1\to F_2 \to G\to H_1\to H_2$
is an exact sequence of functors and the $F_i$, $H_i$ are contracted
then $G$ is acyclic and there is an exact sequence for every $f$:
\[
LF_1(f) \to LF_2(f) \to LG(f) \to LH_1(f)\to LH_2(f).
\]
\end{corollary}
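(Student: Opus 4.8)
The plan is to reduce to the short exact sequence case already settled in Lemma \ref{mor} by the usual dévissage, all constructions being understood pointwise in $f$. Write $\eta$ for the map $F_1\to F_2$ and put $Q=\coker(\eta)$; since $F_1$ and $F_2$ are contracted, Lemma \ref{mor} shows that $Q$ is contracted and that $LQ=\coker(L\eta)$, i.e. $LQ$ is the cokernel of $LF_1\to LF_2$. Exactness of $F_1\to F_2\to G\to H_1\to H_2$ says that $Q=F_2/\im(\eta)$ is carried isomorphically onto $\im(F_2\to G)=\ker(G\to H_1)$, so $Q$ is naturally a subfunctor of $G$. Dually, put $P=\ker(H_1\to H_2)$; Lemma \ref{mor} shows $P$ is contracted with $LP=\ker(LH_1\to LH_2)$, and exactness identifies $P$ with $\im(G\to H_1)\cong G/Q$. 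Thus we get a short exact sequence of functors $1\to Q\to G\to P\to1$ in which both $Q$ and $P$ are contracted, hence acyclic.

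Now apply the second part of Lemma \ref{mor} to this sequence: $G$ is acyclic, and for every $f$ there is a short exact sequence $1\to LQ\to LG\to LP\to1$. It remains to splice this with the identifications $LQ=\coker(LF_1\to LF_2)$ and $LP=\ker(LH_1\to LH_2)$. Concretely, the canonical surjection $LF_2\twoheadrightarrow LQ$ followed by the inclusion $LQ\hookrightarrow LG$ is the map $LF_2\to LG$ induced by $F_2\to G$, and it has kernel $\im(LF_1\to LF_2)$; the map $LG\to LH_1$ induced by $G\to H_1$ factors as the surjection $LG\twoheadrightarrow LP$ followed by the inclusion $LP\hookrightarrow LH_1$, so its kernel is $LQ=\im(LF_2\to LG)$ and its image is $LP=\ker(LH_1\to LH_2)$. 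Concatenating gives the asserted exact sequence $LF_1\to LF_2\to LG\to LH_1\to LH_2$.

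I do not anticipate a genuine obstacle here; the only real work is the compatibility bookkeeping in the last step, namely verifying that the maps produced by this splicing are precisely the ones induced by the given maps $F_1\to F_2\to G\to H_1\to H_2$ (and not, say, these maps twisted by a sign or by one of the identifications). This is a routine diagram chase, since $\ker$, $\coker$, $\im$ and the operation $F\mapsto LF$ are all functorial in the functor variable, so every relevant square commutes; naturality of the final sequence in $f$ is automatic for the same reason, each step being natural in $f$.
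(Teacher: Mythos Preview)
Your proof is correct and is exactly the d\'evissage the paper has in mind: the paper states the result as an immediate corollary of Lemma~\ref{mor} and gives no further argument, so your reduction via $Q=\coker(F_1\to F_2)$ and $P=\ker(H_1\to H_2)$ to the short exact sequence case is precisely what is intended.
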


Of course, Corollary \ref{LF sequence} may be iterated to get
exact sequences for $LLG(f)$, etc., because the $LF_i$ and $LH_i$
are contracted functors.

\begin{example}\label{ex:Unil}
Recall from \cite[XII.7.8]{bass} \cite[III.4.1.3]{WK} that the
units $U$ form a contracted functor on rings with $LU(A)=H^0(\Spec A,\Z)$
and $LLU=NLU=0$. Similarly, $F(A)=U(A_\red)$ is a contracted functor
and its contraction $LF(A)=LU(A_\red)$ is isomorphic to $LU(A)$.
Define $\Unil(A)$ to be the kernel of $U(A)\to U(A_\red)$; 
it is the multiplicative group $(1+\textrm{nil}(A))^\times$. 
Since $1\to \Unil(A)\to U(A)\to U(A_\red)\to 1$ is an exact sequence,
Lemma \ref{mor} implies that $\Unil$ is a contracted functor with
$L\Unil(A)=0$.

Given a commutative ring extension $f:A\hookrightarrow B$,
define $\Unil(f)$ to be the cokernel of $\Unil(A)\to\Unil(B)$.
From the exact sequence $1\to\Unil(A)\to\Unil(B)\to\Unil(f)\to1$
and Lemma \ref{mor}, we see that $\Unil(f)$ is a contracted
functor with $L\Unil(f)=0$.
\end{example}

\begin{subremark}\label{rem:NI}
Since $N\Unil(f)=(1+t\,\nil(B)[t])^\times/(1+t\,\nil(A)[t])^\times$,
it follows that $N\Unil(f)=0$ if and only if $\nil(A)=\nil(B)$.
This is trivial if $B$ is reduced.
\end{subremark}

\section{Relative Cartier divisors}\label{sec:I}

Relative Cartier divisors are functors on ring extensions.
Recall that the $A$-submodules of $B$ form a monoid under
multiplication, with identity $A$. An $A$-submodule $L_1$ of $B$
is said to be {\it invertible} if $L_1L_2=A$ for some $L_2$.
In particular, $L_1$ is isomorphic to an invertible ideal of $A$.
An invertible  $A$-submodule is also said to be a
{\it relative Cartier divisor}.

\begin{definition}\label{def:cI}
Given a ring extension $f:A\hookrightarrow B$,
let $\cI(f)$ denote the multiplicative group of 
all $A$-submodules of $B$ which are invertible.  
We shall sometimes write $\cI(A,B)$ for $\cI(f)$.  It is easily 
seen that $\cI$ is a functor from the category of ring extensions 
to abelian groups. 
\end{definition}

The study of $\cI(A,B)$ was initiated by Roberts and Singh in 
\cite{rs}. 

If $\cO^\times_A$ is the Zariski sheaf of units on $\Spec(A)$
and $f_*\cO^\times_B$ is the direct image sheaf on $\Spec(A)$
associated to the units on $\Spec(B)$, it is easy to see that
\begin{equation}\label{I=H^0}
\cI(A,B) \cong H_\zar^0(\Spec A,f_*\cO^\times_B/\cO^\times_A).
\end{equation}
In effect, an invertible $A$-submodule $L$ can be described by
giving an open cover $\{U_i\}$, $U_i=\Spec(A[1/s_i])$ of $\Spec(A)$ and 
elements $f_i$ of $B[1/s_i]^\times$ (defined modulo $A[1/s_i]^\times$)
such that each $f_i/f_j$ is in $A[1/s_is_j]^\times$.

For example, if $A$ is a domain and $K$ is the field of fractions,
then $\cI(A,K)$ is the group of Cartier divisors and the
interpretation of $\cI(A,K)$ as $H^0(\Spec A,f_*\cO^\times_K/\cO^\times_A)$
is standard. For this reason, we shall call $\cI(f)$ the group of
{\it relative Cartier divisors.}

Since $\Pic(A)$ (the Picard group of $A$) is $H^1(\Spec A,\cO^\times_A)$,
and $H^1(\Spec A,f_*\cO^\times_B)$ is a subgroup of $\Pic(B)$,
the (Zariski or \'etale) cohomology sequence associated to the exact
sequence of sheaves on $\Spec A$,
$
1 \to \cO^\times_A \to f_*\cO^\times_B \to f_*\cO^\times_B/\cO^\times_A\to1
$
is the exact sequence mentioned in the Introduction:
\begin{equation}\label{seq:U-Pic}
 1\to  U(A) \to  U(B) \to  \cI(A,B) \to  \Pic A \to  \Pic B. 
\end{equation}
It is clear that this sequence is natural in $f$.
(A more elementary proof of exactness is given in \cite[Theorem 2.4]{rs}.)

\begin{proposition}\label{LU-LPic}
The functor $\cI$ is acyclic, $NL\cI=LL\cI=0$
and there is an exact sequence
\[
1 \to LU(A) \to LU(B) \to L\cI(f) \to L\Pic(A) \to L\Pic(B).
\]
\end{proposition}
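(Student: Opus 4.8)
The plan is to derive everything from the five-term exact sequence \eqref{seq:U-Pic}
together with the formal machinery of Section \ref{sec:prelim}. The key input is that
$U$ and $\Pic$ are contracted functors on rings, hence $s^*U$, $t^*U$, $s^*\Pic$,
$t^*\Pic$ are contracted functors on ring extensions; I will abuse notation and write
these simply as $U(A)$, $U(B)$, $\Pic(A)$, $\Pic(B)$, viewed as functors of $f:A\hookrightarrow B$.
Then \eqref{seq:U-Pic} exhibits $\cI$ as sitting in an exact sequence
\[
U(A) \to U(B) \to \cI(f) \to \Pic(A) \to \Pic(B)
\]
of functors on ring extensions whose outer four terms are contracted. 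By
Corollary \ref{LF sequence} (with $F_1=s^*U$, $F_2=t^*U$, $G=\cI$, $H_1=s^*\Pic$,
$H_2=t^*\Pic$), $\cI$ is acyclic and there is an exact sequence
\[
LU(A) \to LU(B) \to L\cI(f) \to L\Pic(A) \to L\Pic(B),
\]
which is the displayed sequence in the Proposition. Here I should note that $LU$ and
$L\Pic$ on ring extensions are computed termwise, i.e.\ $L(s^*U)(f)=LU(A)$ and
$L(t^*U)(f)=LU(B)$, since the operations $N$ and $L$ only involve adjoining $t$ and
$1/t$, which commutes with pulling back along the source and target functors.

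For the vanishing statements $NL\cI = LL\cI = 0$, the point is that $L\cI$ is again
caught between contracted functors whose iterated $N$'s and $L$'s vanish. Concretely,
from the exact sequence just obtained, Lemma \ref{mor} and Corollary \ref{LF sequence}
(applied now to the five contracted functors $LU(A)$, $LU(B)$, $L\cI$, $L\Pic(A)$,
$L\Pic(B)$) give an exact sequence
\[
LLU(A) \to LLU(B) \to LL\cI(f) \to LL\Pic(A) \to LL\Pic(B).
\]
By Example \ref{ex:Unil} we have $LLU = 0$, and by the cited results of Bass and Weibel
(\cite[XII.7.8]{bass}, \cite[III.4.1.3]{WK}, \cite{wei}) we have $LL\Pic = 0$ as well;
hence $LL\cI(f) = 0$. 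The identical argument, replacing the outermost $L$ by $N$ and
using $NLU = 0$ (Example \ref{ex:Unil}) and $NL\Pic = 0$ (from \cite{wei}, where
$L\Pic(A) = H^1_\et(\Spec A,\Z)$ is shown to be $NL$-acyclic), yields $NL\cI(f) = 0$.
One subtlety to check is that $N$ and $L$ really do commute appropriately with the
source/target pullback so that, e.g., $NL(s^*\Pic)(f) = NL\Pic(A)$; but this is immediate
since adjoining $t,1/t$ to $A\hookrightarrow B$ restricts on the source to adjoining
$t,1/t$ to $A$.

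The main obstacle, such as it is, is purely bookkeeping: one must be careful that
Corollary \ref{LF sequence} is being applied to functors that are genuinely contracted
\emph{on the category of ring extensions}, and that $L$ of $s^*U$ is $LU$ of the source
and not something mixing the two rings. Once that is nailed down, the proof is a formal
consequence of the long exact sequence and the known acyclicity of $LU$ and $L\Pic$ under
$N$ and $L$; there is no geometric content here, and the identification of $L\cI(f)$ with
an \'etale cohomology group is deferred to Section \ref{sec:main}. I would therefore keep
this proof short, citing Corollary \ref{LF sequence}, Example \ref{ex:Unil}, and the
references \cite{bass}, \cite{wei}, \cite{WK} for the vanishing of $LLU$, $NLU$, $LL\Pic$,
$NL\Pic$.
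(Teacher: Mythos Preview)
Your proposal is correct and follows essentially the same route as the paper: apply Corollary~\ref{LF sequence} to the five-term sequence \eqref{seq:U-Pic} (using that $U$ and $\Pic$ are contracted) to get acyclicity of $\cI$ and the displayed $L$-sequence, then iterate and invoke $NLU=LLU=0$ (Example~\ref{ex:Unil}) and $NL\Pic=LL\Pic=0$ (\cite[7.7]{wei}) to kill $NL\cI$ and $LL\cI$. Your extra remarks about $s^*$, $t^*$ and the compatibility of $N$, $L$ with source/target are sound bookkeeping that the paper leaves implicit.
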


\begin{proof}
The units $U$ and Picard group $\Pic$ are contracted functors on rings
(see \cite[5.2]{wei}).
Applying Corollary \ref{LF sequence} to \eqref{seq:U-Pic}, 
we see that $\cI$ is acyclic, and that there are exact sequences
\begin{align*}
1 \to LU(A) \to LU(B) \to& L\cI(f) \to L\Pic(A) \to L\Pic(B), \\
1 \to LLU(A) \to LLU(B) \to& LL\cI(f) \to LL\Pic(A) \to LL\Pic(B).
\end{align*}
Now $NLU=LLU=0$ by Example \ref{ex:Unil}, and
$LL\Pic=NL\Pic=0$ by \cite[7.7]{wei}.
This yields $LL\cI(f)=0$, $LU(A)=LU(A[t])$ and $L\Pic(A[t])=L\Pic(A)$.
It is immediate that $NL\cI(f)=0$.
\end{proof}

Since $L\Pic$ vanishes on normal domains \cite[1.5.2]{wei}, we
see that (i) if $A$ is a normal domain then $L\cI(f)=0$ if and only if 
$B$ is connected, and (ii) If $B$ is a normal domain then
$L\cI(f)=0$ if and only if $L\Pic(A)=0$.
More generally, we have:

\begin{corollary}\label{LPic.injects}
If $A$ is connected, and $f:A\hookrightarrow B$ is an extension, then
$L\cI(f)=0$ if and only if 
(i) $B$ is connected and
(ii) $L\Pic(A)\to L\Pic(B)$ is an injection.
\end{corollary}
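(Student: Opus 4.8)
The plan is to read everything off the five-term exact sequence
\[
1 \to LU(A) \to LU(B) \to L\cI(f) \to L\Pic(A) \to L\Pic(B)
\]
furnished by Proposition \ref{LU-LPic}, once the two left-hand terms have been identified. By Example \ref{ex:Unil}, $LU(R)=H^0(\Spec R,\Z)$ for any ring $R$, i.e. the group of locally constant $\Z$-valued functions on $\Spec R$; this group is torsion-free, and it coincides with the subgroup $\Z\cdot 1$ of constant functions exactly when $\Spec R$ is connected, that is, when $R$ is connected. Since $A$ is connected by hypothesis, $LU(A)=\Z$, generated by the class of the constant function $1$.

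Next I would pin down the first arrow. It is $L$ applied to the inclusion $U(A)\hookrightarrow U(B)$, hence is the pullback map $H^0(\Spec A,\Z)\to H^0(\Spec B,\Z)$ along $\Spec B\to\Spec A$, which carries the constant function $1$ on $\Spec A$ to the constant function $1$ on $\Spec B$. As $A\hookrightarrow B$ forces $B\neq 0$, this map is injective (in agreement with the leading ``$1\to$'' in Proposition \ref{LU-LPic}), and its image is precisely the subgroup $\Z\cdot 1\subseteq H^0(\Spec B,\Z)=LU(B)$. Splicing the five-term sequence at $L\cI(f)$ then yields a short exact sequence
\[
0 \to LU(B)/\Z \to L\cI(f) \to \ker\!\bigl(L\Pic(A)\to L\Pic(B)\bigr) \to 0 .
\]

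Finally, $L\cI(f)=0$ if and only if both outer terms of this sequence vanish. The quotient $LU(B)/\Z = H^0(\Spec B,\Z)/(\Z\cdot 1)$ is zero iff every locally constant $\Z$-valued function on $\Spec B$ is constant, i.e. iff $\Spec B$ is connected --- this is condition (i); and $\ker(L\Pic(A)\to L\Pic(B))=0$ is condition (ii). This settles both implications at once. The argument is a short diagram chase once $LU$ is identified, so there is no serious obstacle; the one point deserving a word of care is the equivalence ``$LU(B)=\Z\cdot 1 \iff B$ connected'', which rests on $H^0(\Spec B,\Z)$ being the torsion-free group of locally constant functions together with the fact that, when $\Spec B$ is connected, the constant function $1$ already generates it.
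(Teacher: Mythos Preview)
Your argument is correct and is exactly the proof the paper has in mind: the corollary is stated without proof immediately after Proposition \ref{LU-LPic}, and your reading of the five-term sequence (using $LU(R)=H^0(\Spec R,\Z)$ from Example \ref{ex:Unil} and the identification $LU(A)=\Z$ when $A$ is connected) is the intended deduction.
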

\goodbreak

\begin{corollary}\label{tfree}
The group $L\cI(A, B)$ is always a torsion-free abelian group.
\end{corollary}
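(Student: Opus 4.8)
The plan is to read everything off the five-term exact sequence of Proposition~\ref{LU-LPic},
\[
1 \to LU(A) \to LU(B) \to L\cI(f) \to L\Pic(A) \to L\Pic(B).
\]
Since the sequence begins with $1\to LU(A)\to LU(B)$, the first map is injective; combining this with exactness at $L\cI(f)$ and at $L\Pic(A)$, I would extract the short exact sequence
\[
0 \longrightarrow LU(B)/LU(A) \longrightarrow L\cI(f) \longrightarrow \ker\bigl(L\Pic(A)\to L\Pic(B)\bigr) \longrightarrow 0 .
\]
An extension of a torsion-free abelian group by a torsion-free abelian group is itself torsion-free, so it suffices to show that the two outer groups here are torsion-free.

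The quotient term is a subgroup of $L\Pic(A)$, so I would simply invoke the torsion-freeness of $L\Pic$, e.g.\ from \cite{wei}. (Only the subgroup $\ker(L\Pic(A)\to L\Pic(B))$ is actually needed, which of course inherits torsion-freeness from $L\Pic(A)$; a self-contained reason is that $L\Pic(A)$ is a first cohomology group with $\Z$-coefficients, and $H^1(-,\Z)$ embeds in $H^1(-,\mathbb{Q})$ on a quasi-compact scheme because locally constant $\mathbb{Q}/\Z$-valued functions lift to $\mathbb{Q}$.)

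For the subgroup $LU(B)/LU(A)$ I would use Example~\ref{ex:Unil}, by which $LU(A)=H^0(\Spec A,\Z)$ and $LU(B)=H^0(\Spec B,\Z)$ are the groups of locally constant $\Z$-valued functions on $\Spec A$ and $\Spec B$, with the map between them given by restriction along $g:\Spec B\to\Spec A$. I would then show that this cokernel injects into $H^0(\Spec B,\mathbb{Q})/H^0(\Spec A,\mathbb{Q})$, which is a $\mathbb{Q}$-vector space and hence torsion-free. For the injectivity: if $x\in H^0(\Spec B,\Z)$ becomes $g^*y$ for some locally constant $y:\Spec A\to\mathbb{Q}$, then $y$ is $\Z$-valued on the image of $g$ (because its pullback $x$ is), so redefining $y$ to be $0$ on the clopen subset of $\Spec A$ where it takes non-integer values produces $y'\in H^0(\Spec A,\Z)$ with $g^*y'=g^*y=x$; thus $x$ already lay in the image of $H^0(\Spec A,\Z)$, as required.

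The step I expect to be the main obstacle — indeed the only one with any real content — is this last one: a quotient of torsion-free groups can a priori acquire torsion, so the argument must exploit the concrete description of $LU(A)\hookrightarrow LU(B)$ as pullback of locally constant functions rather than any abstract principle. Everything else is formal manipulation of the sequence in Proposition~\ref{LU-LPic} together with the (citable) torsion-freeness of $L\Pic$.
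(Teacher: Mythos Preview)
Your proof is correct and follows essentially the same route as the paper's: both extract the information from the exact sequence of Proposition~\ref{LU-LPic} and invoke the torsion-freeness of $L\Pic(A)$ from \cite{wei}. The only difference is that where the paper cites \cite[Prop.\,1.3]{wei} for the (stronger) fact that the image of $LU(B)$ in $L\cI(A,B)$ is free abelian, you supply a direct and self-contained argument for the torsion-freeness of $LU(B)/LU(A)$.
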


\begin{proof}
By \cite[2.3.1]{wei}, $L\Pic(A)$ is a torsion-free abelian group. 
In addition, the image of $LU(B)$ in $L\cI(A, B)$ is
free abelian by \cite[Prop.\,1.3]{wei}. The fact that
$L\cI(A, B)$ is torsionfree now follows from Proposition \ref{LU-LPic}.
\end{proof}


Recall from \cite{wei} that a noetherian ring $A$ is called
\textbf{pseudo-geometric} if every reduced finite $A$-algebra $B$ has
finite normalization. For example, any finitely generated algebra over
a field or over $\Z$ is pseudo-geometric.

\begin{proposition}\label{free}
If $A$ is pseudo-geometric and $\dim A\!<\!\infty$, then
$L\cI(A,B)$ is a free abelian group.
\end{proposition}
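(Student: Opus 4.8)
The plan is to deduce the statement from the five-term exact sequence of Proposition~\ref{LU-LPic}. Writing $f:A\hookrightarrow B$, that sequence
\[
1 \to LU(A) \to LU(B) \to L\cI(f) \to L\Pic(A) \to L\Pic(B)
\]
exhibits $L\cI(f)$ as an extension
\[
0 \to C \to L\cI(f) \to K \to 0,
\]
where $C$ is the image of $LU(B)$ in $L\cI(f)$ (equivalently, the cokernel of $LU(A)\to LU(B)$) and $K=\ker\bigl(L\Pic(A)\to L\Pic(B)\bigr)$ is a subgroup of $L\Pic(A)$. So it suffices to show that $C$ and $K$ are both free abelian and that this extension splits.

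First I would treat $K$. Under the running hypotheses that $A$ is pseudo-geometric and $\dim A<\infty$, the group $L\Pic(A)$ is free abelian by the structure theory of $L\Pic$ in \cite{wei}; since every subgroup of a free abelian group is again free abelian, $K$ is free abelian. Next, for $C$: exactly as in the proof of Corollary~\ref{tfree}, the image of $LU(B)$ in $L\cI(A,B)$, which is precisely $C$, is free abelian by \cite[Prop.\,1.3]{wei}. Finally, because $K$ is free abelian we have $\operatorname{Ext}^1_{\Z}(K,C)=0$, so the short exact sequence $0\to C\to L\cI(f)\to K\to 0$ splits; thus $L\cI(A,B)\cong C\oplus K$ is a direct sum of free abelian groups, hence free abelian.

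The only genuinely external input is the freeness of $L\Pic(A)$ when $A$ is pseudo-geometric and finite-dimensional, so this is the step I expect to lean on most heavily, and it is precisely where the noetherian/pseudo-geometric hypotheses enter: for an arbitrary extension one can conclude only from Corollary~\ref{tfree} that $L\cI(A,B)$ is torsion-free. Everything else (splitting off a free quotient, and passing to a subgroup of a free abelian group) is formal once that fact and \cite[Prop.\,1.3]{wei} are granted.
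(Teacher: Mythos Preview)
Your argument is correct and is essentially the same as the paper's: both use the five-term sequence of Proposition~\ref{LU-LPic} to sandwich $L\cI(f)$ between the image of $LU(B)$ (free by \cite[Prop.\,1.3]{wei}) and a subgroup of $L\Pic(A)$ (free by \cite[Prop.\,2.3]{wei} under the pseudo-geometric, finite-dimensional hypothesis), then conclude by splitting off the free quotient. Your write-up is simply more explicit about the $\operatorname{Ext}^1$ splitting step, which the paper leaves implicit.
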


\begin{proof}
When $A$ is pseudo-geometric with $\dim A<\infty$,
$L\Pic A$ is a free abelian group by Proposition 2.3 of \cite{wei}.
So the image of $L\cI(f)$ in $L\Pic(A)$ is a free abelian group.
Again, Proposition \ref{LU-LPic} implies that $L\cI(A, B)$
is a free abelian group.
\end{proof}

\begin{subremark}
If $A$ is a 1-dimensional domain, then $L\cI(A, B)$ is a free
abelian group. This follows from the sequence of
Proposition \ref{LU-LPic}, and the facts that 
(i) $L\Pic(A)$ is a free abelian group \cite[3.4.1]{wei},
(ii) subgroups of free abelian groups are free and
(iii) the image of $LU(B)$ in $L\cI(A,B)$ is free abelian 
\cite[Prop.\,1.3]{wei}.
\end{subremark}

\begin{question}
Is $L\cI(A, B)$ always a free abelian group?
\end{question}

\medskip
For the rest of this paper, it is convenient to adopt 
scheme-theoretic language. 
Recall that a morphism of schemes $f:X\to S$ is {\it affine}
if the inverse image $f^{-1}U$ of any affine open subset $U$ of $S$ 
is an affine  open subset of $X$. We will say that an affine morphism is
{\it faithful} if $\cO_S\to f_*\cO_X$ is an injection; if the inverse
image of $\Spec(A)$ is $f^{-1}U=\Spec(B)$, this implies that $A\to B$
is an injection.

\begin{notation}
The category of ring extensions embeds contravariantly into the 
category of faithful affine morphisms of schemes, $f:X\to S$;
morphisms $f\to f'$ in this category are compatible pairs of maps
$X\to X'$ and $S\to S'$. If $f$ is a faithful affine morphism,
$\cI(f)$ will denote the multiplicative group of all
$\cO_S$-submodules of $f_*\cO_X$ which are invertible.
\end{notation}

It is clear that the formal yoga of Sections
\ref{sec:prelim} and \ref{sec:I} extend to the category of
faithful affine morphisms $X\to S$.
Given a faithful affine map $f:X\to S$, \eqref{I=H^0}
easily generalizes to
$\cI(f)\cong H_\zar^0(S,f_*\cO^\times_X/\cO^\times_S)$.
Proposition \ref{LU-LPic} implies that $\cI$ is an 
acyclic functor with $NL\cI=LL\cI=0$, 
Corollary \ref{tfree} states that $L\cI(f)$ is torsionfree.
Remark \ref{rem:NI} is replaced by: $N\Unil(f)=0$ if and only if 
$H^0(S,\nil\,\cO_S)=H^0(X,\nil\,\cO_X)$.

\bigskip\goodbreak
\section{Basic properties}\label{sec:basic}

In this short section, we give a few results that allow us to
relate $L\cI(f)$ to $L\cI$ of other ring extensions.
Given a map $f:A\hookrightarrow B$, we write
$f_\red$ for the evident map $A_\red\hookrightarrow B_\red$.

\begin{theorem}\label{LI-red}
The natural map $L\cI(A,B) \map{\cong} L\cI(A_\red,B_\red)$ is an
isomorphism. In addition, there is a 
natural short exact sequence of functors on ring extensions
\[
1\to\Unil(f)\to\cI(f)\to\cI(f_\red)\to 0.
\]
\end{theorem}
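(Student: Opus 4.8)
The plan is to establish the short exact sequence first, by comparing the natural five-term sequence \eqref{seq:U-Pic} for $f$ with the corresponding one for $f_\red$, and then to deduce $L\cI(A,B)\cong L\cI(A_\red,B_\red)$ from it by a snake-lemma argument with the ``$\pm$'' maps. The two maps in the sequence are easy to name: $\cI(f)\to\cI(f_\red)$ is $\cI$ applied to the evident morphism of ring extensions $f\to f_\red$ (concretely, it carries an invertible $A$-submodule $L\subseteq B$ to its image $\overline L\subseteq B_\red$, which is again invertible), while $\Unil(f)\to\cI(f)$ is the composite $\Unil(B)=(1+\nil B)^\times\hookrightarrow U(B)\to\cI(f)$, $u\mapsto uA$, which kills $\Unil(A)$ (since $uA=A$ when $u\in\Unil(A)$) and hence factors through $\Unil(f)=\coker(\Unil(A)\to\Unil(B))$. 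Both maps are visibly natural in $f$.

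For exactness, I would split \eqref{seq:U-Pic} into $1\to\coker(U(A)\to U(B))\to\cI(f)\to\ker(\Pic A\to\Pic B)\to1$, do the same for $f_\red$, and map the first such sequence to the second. On the $\Pic$-kernel the comparison map is an isomorphism because $\Pic$ is unchanged under reduction ($\Pic A\cong\Pic A_\red$, $\Pic B\cong\Pic B_\red$, naturally), so it restricts to an isomorphism of the kernels of the compatible maps to $\Pic B$, resp.\ $\Pic B_\red$. On the $U$-cokernel, compare the sequences $1\to\Unil(R)\to U(R)\to U(R_\red)\to1$ for $R=A$ and $R=B$ (surjectivity of $U(R)\to U(R_\red)$ is elementary, since a lift of a unit is a unit modulo nilpotents); since $A\hookrightarrow B$ and $A_\red\hookrightarrow B_\red$ are injective, a first application of the snake lemma shows $\coker(U(A)\to U(B))\to\coker(U(A_\red)\to U(B_\red))$ is onto with kernel $\coker(\Unil(A)\to\Unil(B))=\Unil(f)$. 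Feeding ``right vertical an isomorphism, left vertical onto with kernel $\Unil(f)$'' into the snake lemma for the pair of short exact sequences then produces the desired $1\to\Unil(f)\to\cI(f)\to\cI(f_\red)\to0$, and a short chase identifies the resulting maps with the ones named above.

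For the statement about $L\cI$: since $\nil(R[t])=\nil(R)[t]$, we have $(A[t])_\red=A_\red[t]$, $(A[t,1/t])_\red=A_\red[t,1/t]$ and likewise for $B$, so the functor $g\mapsto\cI(g_\red)$ commutes with the polynomial and Laurent extensions, and $L$ of that functor evaluated at $f$ is $L\cI(f_\red)$. Evaluating the short exact sequence just proven, now viewed as a sequence of functors on ring extensions, at $f[t]\oplus f[1/t]$ and at $f[t,1/t]$, and linking them by the ``$\pm$'' maps, gives a commuting ladder of short exact sequences whose vertical cokernels are, from left to right, $L\Unil(f)$, $L\cI(f)$ and $L\cI(f_\red)$. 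The snake lemma yields an exact sequence ending $L\Unil(f)\to L\cI(f)\to L\cI(f_\red)\to0$; as $L\Unil(f)=0$ by Example \ref{ex:Unil}, the natural map $L\cI(f)\to L\cI(f_\red)$ is surjective, and it is injective because its kernel is the image of $L\Unil(f)=0$. Note that this step uses only that $\Unil$ is contracted (it does not require $\cI$ itself to be contracted, which is not yet established at this point) together with the short exact sequence above.

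The genuinely non-formal input is the standard fact that $\Pic$ is insensitive to nilpotents, which I would cite rather than reprove. Apart from that, the one step demanding care is the bookkeeping at the end of the second paragraph: one must check that the two snake-lemma identifications really do agree with the explicit maps $u\mapsto uA$ and $L\mapsto\overline L$, since the naturality of everything downstream rests on it. Everything else is formal diagram chasing.
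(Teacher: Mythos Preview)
Your proposal is correct and follows essentially the same route as the paper. The paper assembles the same $3\times5$ diagram (with rows $\Unil$, $U$, $U_\red$ and $\Pic=\Pic_\red$) and obtains exactness of the third column by a diagram chase; you break that chase into two snake-lemma steps, which is merely a more explicit version of the same argument. For the $L\cI$ isomorphism the paper simply invokes Lemma~\ref{mor} (applied to the short exact sequence just established, using $L\Unil(f)=0$), whereas you redo that lemma's snake-lemma content by hand; your observation that one needs only $L\Unil(f)=0$ here, not yet that $\cI$ is contracted, is correct and worth noting.
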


\begin{proof}
Consider the commutative diagram
\[\xymatrix @R=.25in {
1\ar[r]&\Unil(A) \ar[r]\ar@{>->}[d]&\Unil(B) \ar[r]\ar@{>->}[d]& 
\Unil(f) \ar[r]\ar[d]&1\\
1\ar[r]&U(A) \ar[r]\ar@{->>}[d] & U(B) \ar[r]\ar@{->>}[d] &\cI(f)\ar[r]\ar[d] &
\Pic(A) \ar[r]\ar@{=}[d] & \Pic(B)\ar@{=}[d] \\
1\ar[r]&U(A_\red)\ar[r]&U(B_\red)\ar[r]&\cI(f_\red)\ar[r]&
\Pic(A_\red)\ar[r]&\Pic(B_\red).
}\]
The groups $\Unil(A)$ and $\Unil(f)$ were defined in
Example \ref{ex:Unil}, where we
observed that the two left columns and the top row are 
short exact sequences; the bottom two rows are 
the exact sequences \eqref{seq:U-Pic}. Since $\Unil(A)$ is the 
intersection of $\Unil(B)$ and $U(A)$ in $U(B)$, a diagram chase 
shows that the third column is exact.

Since $L\Unil(f)=0$ by Example \ref{ex:Unil}, the isomorphism
$L\cI(f)\cong L\cI(f_\red)$ follows from the second part of 
Lemma \ref{mor}, applied to the third column.
\end{proof}

\begin{subremark}
The first part of Theorem \ref{LI-red} extends to $X\to S$
by our Main Theorem \ref{thm:main} below; see \ref{LI(f)red}.
The second part of Theorem \ref{LI-red} can fail for $X\to S$ as 
$U(S)\to U(S_\red)$ need not be onto.
\end{subremark}

\begin{corollary}\label{NI-NIred}
$N\cI(A,B)\map{\cong} N\cI(A_\red,B_\red)$ if and only if
$\nil(A)=\nil(B)$.

Moreover, if $\nil(A)\ne\nil(B)$ then $N\cI(f)\ne0$.
\end{corollary}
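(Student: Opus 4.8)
The plan is to apply the operation $N$ to the natural short exact sequence of functors
\[
1 \to \Unil(f)\to \cI(f)\to \cI(f_\red)\to 0
\]
of Theorem \ref{LI-red}, and then to read both assertions off the resulting sequence using the computation of $N\Unil(f)$ in Remark \ref{rem:NI}.

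Concretely, I would first evaluate the sequence of Theorem \ref{LI-red} at the extension $f[t]$. Since $A_\red[t]=A[t]_\red$ and $B_\red[t]=B[t]_\red$, we have $f_\red[t]=(f[t])_\red$, so that sequence reads
\[
1 \to \Unil(f[t])\to \cI(f[t])\to \cI(f_\red[t])\to 0 ;
\]
it maps to the corresponding sequence for $f$ via the three ``$t\mapsto 1$'' homomorphisms, each of which is a split surjection (split by the maps induced by $f\to f[t]$). Applying the snake lemma to this map of short exact sequences, the three cokernels vanish and one is left with a short exact sequence
\[
1 \to N\Unil(f)\to N\cI(f)\to N\cI(A_\red,B_\red)\to 0 ,
\]
having identified $\ker(\cI(f_\red[t])\to\cI(f_\red))$ with $N\cI(A_\red,B_\red)$.

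Given this sequence the first assertion is immediate: the map $N\cI(A,B)\to N\cI(A_\red,B_\red)$ is always onto, and it is injective --- hence an isomorphism --- exactly when $N\Unil(f)=0$; by Remark \ref{rem:NI} this holds precisely when $\nil(A)=\nil(B)$. For the ``moreover'', note that $\nil(A)\subseteq\nil(B)$ always (a nilpotent element of $A$ is nilpotent in $B$), so $\nil(A)\ne\nil(B)$ forces the inclusion to be strict; then $N\Unil(f)\ne 0$ by Remark \ref{rem:NI}, and since $N\Unil(f)$ injects into $N\cI(f)$ we conclude $N\cI(f)\ne 0$. The only mildly delicate point is the bookkeeping in the snake-lemma step together with the identification of the last term, but since the $t\mapsto 1$ maps are naturally split this is routine and poses no real obstacle.
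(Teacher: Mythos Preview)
Your proof is correct and follows essentially the same route as the paper: both obtain the short exact sequence $1\to N\Unil(f)\to N\cI(f)\to N\cI(f_\red)\to 0$ by applying $N$ to the sequence of Theorem~\ref{LI-red} (the paper just says ``replacing $f$ by $f[t]$'' where you spell out the snake-lemma details), and then both invoke Remark~\ref{rem:NI}. Your treatment of the ``moreover'' clause via the injection $N\Unil(f)\hookrightarrow N\cI(f)$ is a bit more explicit than the paper's, but the argument is the same.
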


\begin{proof}
Replacing $f$ by $f[t]$ in Theorem \ref{LI-red}, we 
have the exact sequence
\[
1\to N\Unil(f)\to N\cI(f)\to N\cI(f_\red)\to 0.
\]
By Remark \ref{rem:NI}, the first term vanishes if and only if
$\nil(A)=\nil(B)$.
\end{proof}

\begin{lemma}\label{3rings-I}
(\cite[\S3]{bs})
Suppose that $f:A\hookrightarrow B$ and $g:B\hookrightarrow C$
are extensions. Then there is a short exact sequence
\[  1\to \cI(A,B) \to \cI(A,C) \to \cI(B,C). \]
\end{lemma}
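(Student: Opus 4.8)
The plan is to write down the two maps explicitly and check exactness by hand; the only real work is a local computation at the primes of $A$.

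The map $\cI(A,B)\to\cI(A,C)$ is ``enlarging the ambient ring'': an invertible $A$-submodule $L$ of $B$ is also an $A$-submodule of $C$ via $B\subseteq C$, and if $LL'=A$ with $L'$ an $A$-submodule of $B$, the same identity holds inside $C$, so $L\in\cI(A,C)$; this assignment visibly respects products. The map $\cI(A,C)\to\cI(B,C)$ sends an invertible $A$-submodule $M$ of $C$ to the $B$-submodule $BM$ of $C$ that it generates; if $MM'=A$ then $(BM)(BM')=B\cdot MM'=B$, so $BM\in\cI(B,C)$, and this too is multiplicative. The first map is injective, being induced by the inclusion of sets $B\subseteq C$. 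The composite $\cI(A,B)\to\cI(B,C)$ is trivial: for $L\in\cI(A,B)$ we have $1\in LL'\subseteq BL$ (as $L'\subseteq B$), so $BL=B$.

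It remains to prove exactness in the middle, i.e.\ that any $M\in\cI(A,C)$ with $BM=B$ already lies in $\cI(A,B)$ — it will then be the image of itself. First, $M\subseteq BM=B$. To see $M$ is invertible as an $A$-submodule of $B$, localize at an arbitrary prime $\wp$ of $A$. Since $M$ is an invertible $A$-submodule of $C$ it is finitely generated and locally free of rank one: there is a unit $m$ of $C_\wp$ with $M_\wp=A_\wp\,m$, and then $(M^{-1})_\wp=A_\wp\,m^{-1}$. Localizing $BM=B$ gives $B_\wp\,m=B_\wp$, so $m$ is a unit of $B_\wp$; hence $m^{-1}\in B_\wp$ and $(M^{-1})_\wp\subseteq B_\wp$ for every $\wp$. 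Consequently, for any $c\in M^{-1}\subseteq C$ the image of $c$ in each localization $(C/B)_\wp=C_\wp/B_\wp$ vanishes, so $c\in B$; thus $M^{-1}\subseteq B$. Since $MM^{-1}=A$ with both factors $A$-submodules of $B$, we conclude $M\in\cI(A,B)$, as required.

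I expect the verification that $M^{-1}\subseteq B$ to be the one delicate point: it uses that invertibility is a local condition (so $M_\wp$ is free of rank one on a generator that is a unit of $C_\wp$) and that membership of an element of $C$ in the $A$-submodule $B$ can be tested on stalks. Alternatively, the lemma follows from the left-exactness of $H^0(\Spec A,-)$ applied to the short exact sequence of Zariski sheaves
\[
1\to f_*\cO^\times_B/\cO^\times_A\to (gf)_*\cO^\times_C/\cO^\times_A\to (gf)_*\cO^\times_C/f_*\cO^\times_B\to 1
\]
on $\Spec A$, together with the monomorphism $(gf)_*\cO^\times_C/f_*\cO^\times_B\hookrightarrow f_*(g_*\cO^\times_C/\cO^\times_B)$ coming from the left-exactness of $f_*$, whose global sections form $\cI(B,C)$ by \eqref{I=H^0}.
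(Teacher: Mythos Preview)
Your argument is correct. The paper's proof is exactly your ``alternative'' at the end: it writes down the exact sequence of Zariski sheaves
\[
1\to f_*\cO_B^\times/\cO_A^\times \to (fg)_*\cO_C^\times/\cO_A^\times
\to f_*(g_*\cO_C^\times/\cO_B^\times)
\]
on $\Spec(A)$ and applies the left-exact global sections functor together with \eqref{I=H^0}. (A minor notational point: with $f:A\hookrightarrow B$ and $g:B\hookrightarrow C$, the induced scheme maps go the other way, so the composite $\Spec(C)\to\Spec(A)$ is $fg$, not $gf$; the paper uses $(fg)_*$.)

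Your main argument, however, is genuinely different: you define the maps $L\mapsto L$ and $M\mapsto BM$ explicitly and verify exactness in the middle by a bare-hands localization, reducing to the observation that if $M_\wp=A_\wp m$ and $B_\wp m=B_\wp$ then $m\in B_\wp^\times$, whence $M^{-1}\subseteq B$ after testing membership stalkwise in the $A$-module $C/B$. This is more elementary and self-contained --- it needs no sheaf language and makes the group-homomorphism structure of the two maps transparent --- while the paper's one-line sheaf proof is shorter and fits better with the cohomological framework of \eqref{I=H^0} used throughout. Both are clean; yours has the advantage that a reader unfamiliar with the sheaf identification \eqref{I=H^0} can still follow it.
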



\begin{proof}
We have an exact sequence of sheaves on $\Spec(A)$:
\[
1\to f_*\cO_B^\times/\cO_A^\times \to (fg)_*\cO_C^\times/\cO_A^\times
\to f_*(g_*\cO_C^\times/\cO_B^\times) 
\]
Now apply the left exact global sections functor and use \eqref{I=H^0}.
\end{proof}

\begin{lemma}\label{MV} 
Let $\ca$ be an ideal of $B$ contained in $A$. Then 
$L\cI(A, B)\cong L\cI(A/\ca, B/\ca)$.
\end{lemma}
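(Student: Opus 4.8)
The plan is to reduce the statement to a splitting of the ideal $\ca$ off from both $U$ and $\Pic$, feeding the pieces through the exact sequence of Proposition \ref{LU-LPic}. First I would observe that since $\ca$ is an ideal of $B$ contained in $A$, we have a commutative ladder comparing $1\to U(A)\to U(B)\to\cI(A,B)\to\Pic(A)\to\Pic(B)$ with the corresponding sequence for $A/\ca\hookrightarrow B/\ca$, and the relevant relative terms are controlled by the conductor square
\[
\xymatrix{
A \ar[r]\ar[d] & B \ar[d]\\
A/\ca \ar[r] & B/\ca.
}
\]
Because this is a Milnor square (indeed $\ca$ is a common ideal), the unit and Picard functors fit into Mayer--Vietoris sequences; the ``error terms'' for $U$ and for $\Pic$ between the top and bottom rows are built from $U$, $\Pic$, and $K_0$ of the nilpotent-free pieces and, crucially, are the \emph{same} for the $A$-row and the $B$-row. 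So I would compare the two Mayer--Vietoris sequences and conclude that the relative functor $f\mapsto(\text{cokernel of }\cI(A,B)\to\cI(A/\ca,B/\ca))$ sits in an exact sequence whose outer terms are differences of unit and Picard groups along the conductor.

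The cleaner route, which I would actually carry out, is to apply $L$ directly. By Proposition \ref{LU-LPic} (in its scheme-theoretic form, or just for rings), $\cI$ is acyclic and we have the five-term exact sequence
\[
1\to LU(A)\to LU(B)\to L\cI(A,B)\to L\Pic(A)\to L\Pic(B),
\]
and likewise with $A,B$ replaced by $A/\ca, B/\ca$. Now I invoke the known behavior of $LU$ and $L\Pic$ under the conductor square: $LU(A)\cong H^0(\Spec A,\Z)$ and $LU(A/\ca)\cong H^0(\Spec A/\ca,\Z)$, and since $\ca\subset\nil$ is false in general one must instead use that $\Spec(A)$ and $\Spec(A/\ca)$ need not be homeomorphic --- so this is not quite immediate. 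The fix is to use Lemma \ref{mor} together with Example \ref{ex:Unil}: the functor $\Unil$ is contracted with $L\Unil=0$, and more to the point the kernel of $U(A)\to U(A/\ca)$ is $(1+\ca)^\times$, which is a subgroup of $\Unil(B)=(1+\nil B)^\times$ only when $\ca$ is nil. So I abandon that and instead argue via Mayer--Vietoris for the conductor square at the level of $L$: $LU$ and $L\Pic$ both satisfy Mayer--Vietoris for the square above (this follows from \cite[7.7]{wei} and \cite{bass}, since $L$ of a Milnor-patching sequence is again exact by Corollary \ref{LF sequence} applied to the Mayer--Vietoris sequence viewed as an exact sequence of contracted functors), and the patching term for the $A$-row equals that for the $B$-row because both are $L$ of invariants of $A/\ca$ and $B/\ca$ joined along their common quotient. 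A diagram chase on the resulting commutative diagram of five-term sequences then shows $L\cI(A,B)\cong L\cI(A/\ca,B/\ca)$.

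The main obstacle I anticipate is the bookkeeping in that diagram chase: one has two overlapping five-term exact sequences and the Mayer--Vietoris sequences for $LU$ and $L\Pic$, and one must check that the comparison map $L\cI(A,B)\to L\cI(A/\ca,B/\ca)$ is sandwiched between isomorphisms on the $LU$ and $L\Pic$ sides (or between maps with isomorphic kernels and cokernels), so that the five lemma applies. The subtlety is that $LU(A)\to LU(A/\ca)$ need \emph{not} be an isomorphism --- $\Spec(A/\ca)$ can have fewer connected components than $\Spec A$ --- so one really does need the full Mayer--Vietoris input rather than a naive comparison, and tracking which connected-component contributions cancel between the $U$-level and the $\Pic$-level is where the care is needed. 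An alternative, possibly shorter, path is to note that $\ca$ being a common ideal means $f$ and $f_{\mathrm{mod}\,\ca}$ have the same conductor square, hence the relative sheaf $f_*\cO_X^\times/\cO_S^\times$ and its analogue mod $\ca$ differ by a sheaf supported on $V(\ca)$ whose $L$ vanishes; if that sheaf-theoretic comparison can be made precise it would bypass the diagram chase entirely.
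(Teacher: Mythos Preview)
Your proposal never settles on a complete argument: you sketch several approaches (comparing five-term sequences, Mayer--Vietoris for the conductor square, a sheaf-theoretic comparison), identify obstacles in each, and leave the choice unresolved. As written this is not a proof but a list of strategies, and the one you call ``cleaner'' runs into exactly the difficulty you flag---$LU(A)\to LU(A/\ca)$ is generally not an isomorphism, so a naive five-lemma argument fails, and the Mayer--Vietoris patching you invoke to repair it is never actually carried out.

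The missing idea is much simpler than anything you tried: $\cI$ itself, not just $L\cI$, is invariant under reduction modulo a common ideal. By \cite[Prop.~2.6]{rs} one already has a natural isomorphism $\cI(A,B)\cong\cI(A/\ca,B/\ca)$. Since $\ca[t]$ is an ideal of $B[t]$ contained in $A[t]$, and likewise $\ca[t,1/t]$ is a common ideal for the Laurent extension, the same isomorphism holds for $\cI(A[t],B[t])$ and $\cI(A[t,1/t],B[t,1/t])$. Comparing $Seq(\cI,f)$ with $Seq(\cI,\bar f)$ then gives $L\cI(A,B)\cong L\cI(A/\ca,B/\ca)$ immediately. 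All of your machinery---the five-term sequence, Mayer--Vietoris, the conductor-square bookkeeping---is unnecessary once you know that $\cI$ already descends.
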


\begin{proof}
Write $\bar{f}$ for $A/\ca\hookrightarrow B/\ca$.
By Proposition 2.6 of \cite{rs}, $\cI(A,B)\cong\cI(\bar{f})$.
Since $\ca[t]$ is an ideal of $B[t]$ contained in $A[t]$,
and $\ca[t,1/t]$ is an ideal of $B[t,1/t]$ in $A[t,1/t]$,
the same is true for $\cI(A[t], B[t])$ and $\cI(A[t,1/t], B[t,1/t])$.
The result follows from a comparison of $Seq(\cI,f)$ and $Seq(\cI,\bar{f})$.
\end{proof}

\smallskip
Here is another elementary result about $\cI$, which allows us
to assume for example that $A$ is noetherian and
$B$ is of finite type over $A$.

\begin{lemma}\label{colim}
$\cI$ commutes with filtered colimits. That is,
if $A\subset B$ is the filtered union of extensions 
$A_\lambda\subset B_\lambda$ then 
$\cI(A,B)=\varinjlim   \cI(A_\lambda,B_\lambda)$ and
$L\cI(A,B)=\varinjlim L\cI(A_\lambda,B_\lambda)$.
\end{lemma}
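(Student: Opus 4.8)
The plan is to reduce everything to the description \eqref{I=H^0} of $\cI$ as a group of global sections of a Zariski sheaf, together with the fact that global sections and cohomology of sheaves behave well under filtered colimits of schemes (or, equivalently here, filtered colimits of rings). First I would treat the statement for $\cI$ itself. Writing $A=\varinjlim A_\lambda$ and $B=\varinjlim B_\lambda$, I would observe that an invertible $A$-submodule $L\subset B$ is given, via the cocycle description following \eqref{I=H^0}, by finitely many elements $s_i\in A$, finitely many units $f_i\in B[1/s_i]^\times$, and the relations $f_i/f_j\in A[1/s_is_j]^\times$ and $\sum A[1/s_i]=A$; all of this data involves only finitely many elements of $A$ and $B$ and finitely many relations among them, so it is already defined over some $A_\lambda\subset B_\lambda$. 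This shows the natural map $\varinjlim\cI(A_\lambda,B_\lambda)\to\cI(A,B)$ is surjective. Injectivity is the same kind of argument applied to the statement that $L$ is trivial, i.e. that it is a principal submodule $uA$ with $u\in U(B)$: such a $u$ and the identities witnessing $L_\lambda\cdot u^{-1}=A_\lambda$ descend to some index. Alternatively, and more cleanly, one can invoke the standard fact that for a filtered inverse system of affine schemes $S_\lambda$ with limit $S$ and compatible quasi-coherent sheaves, $H^q(S,\mathcal F)=\varinjlim H^q(S_\lambda,\mathcal F_\lambda)$, applied to $\mathcal F=f_*\cO^\times_X/\cO^\times_S$ in degree $0$ (cf. \cite[\S8]{EGAIV} or \cite[Tag 01YX]{stacks}); here one uses that $f_*\cO^\times_X/\cO^\times_S$ is the colimit of the sheaves $(f_\lambda)_*\cO^\times_{X_\lambda}/\cO^\times_{S_\lambda}$ pulled back to $S$.

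Once $\cI(A,B)=\varinjlim\cI(A_\lambda,B_\lambda)$ is established, the statement for $L\cI$ follows formally. The extensions $A_\lambda[t]\subset B_\lambda[t]$ form a filtered system with union $A[t]\subset B[t]$, and likewise for $A[1/t]\subset B[1/t]$ and $A[t,1/t]\subset B[t,1/t]$ (polynomial and Laurent polynomial rings commute with filtered colimits). Applying the already-proven case of the lemma to each of these three extensions, the sequence $Seq(\cI,f)$ is the filtered colimit of the sequences $Seq(\cI,f_\lambda)$. Since filtered colimits are exact, $L\cI(f)=\coker\bigl(\cI(f[t])\times\cI(f[1/t])\to\cI(f[t,1/t])\bigr)$ is the colimit of the corresponding cokernels $L\cI(f_\lambda)$, which is exactly the claim.

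The only genuine obstacle is the surjectivity half of the first step: making sure that an arbitrary invertible $A$-submodule of $B$ — not merely one given in a prescribed form — descends to a finite stage. This is handled by noting that invertibility is a finite condition: $L$ is invertible iff it is finitely generated and locally principal, so one may choose finitely many generators $\ell_1,\dots,\ell_m\in B$ of $L$, finitely many $s_i\in A$ generating the unit ideal, and for each $i$ an expression of each $\ell_k/1$ in $B[1/s_i]$ as (unit of $B[1/s_i]$)$\,\cdot\,$(element of $A[1/s_i]$) exhibiting $L[1/s_i]$ as free of rank one over $A[1/s_i]$; all of this is finite data and descends. Everything else in the argument is the routine exactness-of-filtered-colimits bookkeeping indicated above, so I would not belabor it in the written proof.
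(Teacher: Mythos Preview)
Your argument is correct, but the paper takes a different and much shorter route for the first half. Rather than descending the cocycle data for an invertible submodule by hand, the paper simply invokes the exact sequence \eqref{seq:U-Pic}
\[
1\to U(A)\to U(B)\to \cI(A,B)\to \Pic(A)\to \Pic(B),
\]
together with the (standard) facts that $U$ and $\Pic$ commute with filtered colimits of rings. Since filtered colimits are exact, comparing the colimit of the sequences for the $A_\lambda\subset B_\lambda$ with the sequence for $A\subset B$ gives $\varinjlim\cI(A_\lambda,B_\lambda)\cong\cI(A,B)$ by a five-lemma argument. Your direct finite-presentation/cocycle argument works too, and has the virtue of being self-contained (it does not presuppose that $\Pic$ commutes with colimits, which is itself proved by exactly the kind of descent-of-finite-data reasoning you wrote out); the paper's approach is quicker here because it recycles machinery already in place. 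For the $L\cI$ statement the two proofs coincide: both apply the established $\cI$ case to the polynomial and Laurent extensions and then use exactness of filtered colimits on $Seq(\cI,f)$.
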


\begin{proof}
Since $U(B)=\cup U(B_\lambda)$ and $\Pic(B)=\varinjlim\Pic(B_\lambda)$,
this lemma follows from \eqref{seq:U-Pic}, $Seq(\cI,f)$
 and the fact that filtered direct limits are exact.
\end{proof}

\begin{proposition}
Suppose that $A=\prod_1^n A_i$ and $B=\prod_1^n B_i$,
where $A_i\subset B_i$.  
Then 
$\cI(A,B)=\prod\cI(A_i,B_i)$,
$N\cI(A,B)=\prod N\cI(A_i,B_i)$ and
$L\cI(A,B)=\prod L\cI(A_i,B_i)$.
\end{proposition}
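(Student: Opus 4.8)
The plan is to reduce the claim to the three functorial ingredients that build $\cI$ and to use the fact that all of $U$, $\Pic$, and $\cI$ commute with finite products. First I would observe that for a finite product ring $A=\prod_1^n A_i$ one has $\Spec(A)=\coprod_1^n \Spec(A_i)$, so a module over $A$ is the same as an $n$-tuple of modules over the $A_i$, and this identification is multiplicative and preserves invertibility; hence $\cI(A,B)=\prod_1^n\cI(A_i,B_i)$ is immediate from Definition \ref{def:cI} (alternatively from the sheaf description \eqref{I=H^0}, since $f_*\cO^\times_B/\cO^\times_A$ decomposes as a product of sheaves over the disjoint pieces). It is equally well known, and recalled in Example \ref{ex:Unil} and the discussion around \eqref{seq:U-Pic}, that $U$ and $\Pic$ are additive for finite products: $U(\prod A_i)=\prod U(A_i)$ and $\Pic(\prod A_i)=\prod\Pic(A_i)$.

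Next I would handle the $N$- and $L$-parts by naturality. Since $A[t]=\prod_1^n A_i[t]$ and $A[t,1/t]=\prod_1^n A_i[t,1/t]$, the isomorphism $\cI(A[t],B[t])\cong\prod\cI(A_i[t],B_i[t])$ is just the product formula applied to the extension $f[t]$, and likewise for $f[1/t]$ and $f[t,1/t]$. Moreover these isomorphisms are compatible with the structure maps defining $N$ and $L$: the endomorphism $t\mapsto1$ and the two maps $F(f)\to F(f[t,1/t])$, $F(f)\to F(f[1/t])\to F(f[t,1/t])$ are all induced by ring homomorphisms that respect the product decomposition. Therefore the split exact sequence $Seq(\cI,f)$ is the direct sum over $i$ of the sequences $Seq(\cI,f_i)$, and taking kernels and cokernels componentwise gives $N\cI(A,B)=\prod N\cI(A_i,B_i)$ and $L\cI(A,B)=\prod L\cI(A_i,B_i)$. (One may phrase this uniformly as: any functor $F$ on ring extensions which is additive for finite products has $NF$ and $LF$ additive for finite products, since $N$ and $L$ are defined from $F$ by finite limits and colimits of diagrams that split over a finite product.)

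I do not expect a genuine obstacle here; the only point requiring a little care is checking that the product identification of $\cI$ is compatible with \emph{all} the maps in $Seq(\cI,f)$ simultaneously, which is a matter of unwinding that every such map comes from a morphism of ring extensions which is a product of morphisms of the factor extensions. Once that compatibility is recorded, the three displayed equalities fall out by applying the exact functor ``product of abelian groups'' to the $n$-fold product of the sequences $Seq(\cI,f_i)$, together with the already-known product formulas for $U$ and $\Pic$ feeding into the quasi-isomorphic form of $Seq(\cI,f)$. Accordingly I would keep the proof to a few lines, citing Definition \ref{def:cI} (or \eqref{I=H^0}) for the $\cI$ part, Example \ref{ex:Unil} for the $U$ part, and naturality of the $N/L$ construction for the rest.
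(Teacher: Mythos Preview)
Your proposal is correct and follows essentially the same route as the paper: establish $\cI(A,B)=\prod\cI(A_i,B_i)$ directly from the fact that modules over a finite product decompose componentwise and invertibility is preserved, then deduce the $N\cI$ and $L\cI$ statements from $(\prod A_i)[t]=\prod(A_i[t])$ and naturality. The paper's argument is slightly more hands-on (writing out the elements $m_{ij},n_{ij}$ explicitly), whereas you invoke the equivalence of module categories and the sheaf description \eqref{I=H^0}; your side remarks about $U$ and $\Pic$ being additive are true but not actually needed, and the paper omits them.
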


\begin{proof}
Every $A$-submodule of $B$ has the form $M=\prod M_i$, where each
$M_i$ is an $A_i$-submodule of $B_i$. If $M$ is invertible with inverse
$N=\prod N_i$, then there are $m_j=(m_{ij})\in M$, $n_j=(n_{ij})\in N$
so that $\sum_j m_{ij}n_{ij}=1$ for all $i$. This shows that each
$M_i$ is an invertible $A_i$-submodule of $B_i$, and hence that
the natural map from $\cI(A,B)$ to $\prod\cI(A_i,B_i)$ is an 
injection. To see that it is a surjection, suppose that $M_i$ are 
invertible $A_i$-submodules of $B_i$ with inverses $N_i$. 
Then for each $i$ there are $m_{ij}$ and $n_{ij}$ so that
$\sum_j m_{ij}n_{ij}=1$. Thus
$\prod M_i$ is an invertible $A$-submodule of $B$.

Since $(\prod_1^n A_i)[t]=\prod_1^n(A_i[t])$,
the assertions about $N\cI$ and $L\cI$ follow by replacing
$A_i$ with $A_i[t]$ and $A_i[t,1/t]$, and similarly for $B_i$.
\end{proof}


\bigskip\goodbreak
\section{Main Theorem}\label{sec:main}

The goal of this section is to show that $\cI$ is a contracted functor,
whose contraction is an \'etale cohomology group.
We refer the reader to \cite{Milne} for basic properties of
\'etale sheaves and \'etale cohomology.

\begin{theorem}\label{thm:main}
$\cI$ is a contracted functor on ring extensions, and its contraction is
\[
L\cI(A,B)= H_\et^0(\Spec A,(f_*\Z)/\Z)=H_\nis^0(\Spec A,(f_*\Z)/\Z).
\]
\end{theorem}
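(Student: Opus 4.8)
The plan is to establish the theorem in two independent halves: first, identify the contraction group $L\cI(f)$ with an étale $H^0$, and second, exhibit an explicit splitting $h(f)$ of the sequence $Seq(\cI,f)$ so that $\cI$ is genuinely contracted, not merely acyclic (acyclicity already being known from Proposition \ref{LU-LPic}). By Lemma \ref{colim} and Theorem \ref{LI-red} I may assume throughout that $A$ is noetherian, $B$ is finite type over $A$, and both are reduced; by Lemma \ref{MV} I can also work with the conductor. These reductions will make the sheaf-theoretic bookkeeping tractable.

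\textbf{Computing the contraction.}
The starting point is the presentation \eqref{I=H^0}, $\cI(f)\cong H^0_\zar(\Spec A, f_*\cO_X^\times/\cO_S^\times)$, together with its analogues over $A[t]$, $A[1/t]$ and $A[t,1/t]$. The key input is the classical computation of the contracted functors $U$ and $\Pic$: for a ring $R$, $U(R[t,1/t])/U(R)^2\cong H^0(\Spec R,\Z)$ (generated by the unit $t$ on each connected component) and $LU(R)=H^0(\Spec R,\Z)$, while $L\Pic(R)$ is described in \cite{wei}. Feeding the exact sequence of Proposition \ref{LU-LPic},
\[
1 \to LU(A) \to LU(B) \to L\cI(f) \to L\Pic(A) \to L\Pic(B),
\]
I want to identify $\coker(LU(A)\to LU(B))$ with $H^0_\et(\Spec A, (f_*\Z)/\Z)$ and then show this already surjects onto $L\cI(f)$, i.e. that $L\cI(f)\to L\Pic(A)$ is zero. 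For the first point: $LU(A)=H^0(\Spec A,\Z)=H^0_\et(\Spec A,\Z)$ and $LU(B)=H^0(\Spec B,\Z)=H^0_\et(\Spec A, f_*\Z)$, compatibly with the map induced by $f$, so the cokernel is $H^0_\et(\Spec A,(f_*\Z)/\Z)$ provided $H^1_\et(\Spec A,\Z)=0$ — which holds because $\Z$ is a constant sheaf and $H^1_\et(\cdot,\Z)=\mathrm{Hom}_{\mathrm{cont}}(\pi_1^{\et},\Z)=0$ for a connected base, hence for any base. The same computation with the Nisnevich topology (where $H^1_\nis(\cdot,\Z)=0$ for the same reason, or because $\Z$ is flasque-like for the Nisnevich topology on a normal scheme) gives the identification with $H^0_\nis$. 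For the second point — that $L\cI(f)\to L\Pic(A)$ vanishes — I expect to use that the map factors through the contraction of the map $\cI(f)\to\Pic(A)$, whose image consists of classes becoming trivial in $\Pic(B)$; one shows that the "Laurent polynomial part" of such classes is already captured by units of $B$. Concretely, an element of $\cI(A[t,1/t],B[t,1/t])$ mapping into $L\cI$ can, after the reductions above, be represented by a cocycle $(f_i)$ with $f_i\in B[t,1/t]^\times/A[t,1/t]^\times$; writing each $f_i = (\text{unit of }B[1/s_i])\cdot t^{n_i}$ modulo the appropriate groups, the integers $n_i$ are locally constant on $\Spec A$ and patch to a section of $(f_*\Z)/\Z$. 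This gives the map $H^0_\et(\Spec A,(f_*\Z)/\Z)\to L\cI(f)$ directly and simultaneously shows it is surjective; injectivity follows from the exact sequence since $LU(A)\to LU(B)$ is exactly $H^0(\Spec A,\Z)\to H^0(\Spec B,\Z)$.

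\textbf{The splitting.}
To upgrade acyclicity to contractedness I must produce a natural section $h(f)\colon L\cI(f)\to\cI(A[t,1/t],B[t,1/t])$. Using the description just obtained, a section of $(f_*\Z)/\Z$ over $\Spec A$ is, Nisnevich-locally, an integer $n$; I send it to the class of the invertible $A[t,1/t]$-submodule $t^{n}A[t,1/t]\subset B[t,1/t]$ on that piece. Because the transition functions differ by elements $t^{n_i-n_j}$ with $n_i\equiv n_j$ giving well-defined $A[1/s_is_j]$-submodule data — wait, more carefully: the ambiguity lies in $(f_*\Z)/\Z$, i.e. $n_i - n_j$ need only be a section of $f_*\Z$, which lifts to an actual integer over a Nisnevich neighborhood, so the local modules $t^{n_i}A[t,1/t][1/s_i]$ glue to a genuine invertible $A[t,1/t]$-submodule of $B[t,1/t]$. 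Naturality in $f$ is clear from the construction, and one checks the composite $L\cI(f)\to\cI(f[t,1/t])\to L\cI(f)$ is the identity by unwinding the definitions. I expect the bulk of the work here to be verifying that this recipe is well-defined independently of the choices of Nisnevich cover and of the lifts $n_i$, and that it respects the equivalence relation defining $L\cI$ — essentially the observation that $NU$ and $N\Pic$ contributions don't interfere because $LLU=LL\Pic=0$ (Proposition \ref{LU-LPic}).

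\textbf{Main obstacle.}
The genuinely delicate step is the vanishing of $L\cI(f)\to L\Pic(A)$, equivalently showing that every relative Cartier divisor over $A[t,1/t]$ that lands in the contraction already comes from a unit of $B[t,1/t]$ modulo $A[t,1/t]$-units of the form $t^n$. This is where the étale (rather than Zariski) topology is forced on us — over the Zariski site the analogous statement fails, which is precisely the content of the promised Example \ref{node}. So the heart of the argument is a local-at-a-point computation: passing to the henselization (or strict henselization) of $A$ at a prime, one must show $L\cI$ of the resulting extension is either zero or a free module on the connected components of the fiber, using that $L\Pic$ vanishes on henselian local rings and normal rings. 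I would handle this by combining Lemma \ref{MV} (reduce to the conductor square), the normalization (finite since $A$ is pseudo-geometric after the colimit reduction), and the known vanishing of $L\Pic$ for normal rings from \cite{wei}, tracing the resulting diagram through Proposition \ref{LU-LPic} once more at the henselian-local level.
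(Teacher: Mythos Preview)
Your identification argument rests on two claims that are both false, and they are not minor slips---they are precisely the crux of the matter. First, $H^1_\et(\Spec A,\Z)$ is \emph{not} zero in general: by \cite[5.5]{wei} one has $L\Pic(A)\cong H^1_\et(\Spec A,\Z)$, and for the node this is $\Z$ (cf.\ Example~\ref{node}). Your reasoning ``$H^1_\et(\cdot,\Z)=\mathrm{Hom}_{\mathrm{cont}}(\pi_1^\et,\Z)=0$'' fails because $\Z$ is not a finite (or profinite) coefficient group, so $\Z$-torsors in the \'etale topology are not governed by the profinite $\pi_1^\et$; the infinite chain of lines over the node is an \'etale $\Z$-torsor that is not Zariski-locally trivial. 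Second, the map $L\cI(f)\to L\Pic(A)$ does \emph{not} vanish: in the node example with $B=k[x]$ it is an isomorphism onto $\Z$. So the entire strategy of showing $L\cI(f)=\coker\bigl(LU(A)\to LU(B)\bigr)$ collapses, and your ``main obstacle'' is aimed at the wrong target.

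The paper's route is exactly the one your two errors tried to shortcut around: it compares the exact sequence of Proposition~\ref{LU-LPic} with the long exact \'etale cohomology sequence of $0\to\Z\to f_*\Z\to f_*\Z/\Z\to 0$, matching $LU\cong H^0_\et(-,\Z)$ and $L\Pic\cong H^1_\et(-,\Z)$ (the latter via \cite[5.5]{wei}) on the four outer terms, and then invokes the 5-lemma (Theorem~\ref{LI=H^0}). The genuine work is checking that the square involving $L\cI(f)\to L\Pic(S)$ and $H^0_\et(S,f_*\Z/\Z)\to H^1_\et(S,\Z)$ commutes, which the paper does via a derived-category triangle argument. Your hensel-local computation (``$L\cI$ is free on components of the fiber'') is correct and is the paper's Lemma~\ref{LI-stalk}; it identifies the sheafification of $L\cI$ with $f_*\Z/\Z$ and produces the canonical map $a_f$, but it does not by itself give the global isomorphism---that still needs the 5-lemma. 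For the splitting, your instinct is right but the formula ``$t^n A[t,1/t]$'' is trivial when $n\in\Z$; you need $n$ to be a locally constant function on $\Spec B$ (a local section of $f_*\Z$), giving $t^n\in B[t,1/t]^\times$ and hence the submodule $t^n\cdot A[t,1/t]\subset B[t,1/t]$. The paper packages this as a map of sheaves $t_f\colon f_*\Z/\Z\to p_*(f^T_*\cO^\times/\cO^\times)$ induced from Bass' splittings $t_S$, $f_*t_X$ (Proposition~\ref{sheaf-split}).
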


Here $(f_*\Z)/\Z$ denotes the quotient sheaf in the \'etale topology.
Theorem \ref{thm:main} is just the special case 
$S=\Spec(A)$ and $X=\Spec(B)$ of the following result.

\begin{theorem}\label{thmSX}
$\cI$ is a contracted functor on faithful affine maps, 
with contraction 
\[
L\cI(f)= H_\et^0(S,(f_*\Z)/\Z) = H_\nis^0(S,(f_*\Z)/\Z).
\]
\end{theorem}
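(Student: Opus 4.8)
The plan is to reduce to a statement about Zariski sheaves that can be analyzed locally, then identify the relevant contraction with an étale cohomology group. First I would use the already-established identification $\cI(f)\cong H^0_{\zar}(S, f_*\cO_X^\times/\cO_S^\times)$ together with the fact, from \cite{wei}, that $U=\cO^\times$ is a contracted functor (on rings, hence on the sheaf level) with $LU=H^0(-,\Z)$, i.e. the constant sheaf $\Z$. The key structural input is the short exact sequence of Zariski sheaves on $S$,
\[
1\to \cO_S^\times \to f_*\cO_X^\times \to f_*\cO_X^\times/\cO_S^\times \to 1,
\]
and its analogues over $S[t]$, $S[1/t]$, $S[t,1/t]$. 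Applying the ``$L$'' construction sheaf-wise and using that $L$ is exact on the relevant pieces (Lemma \ref{mor}, Corollary \ref{LF sequence}) should give a short exact sequence of sheaves
\[
1\to \underline{\Z}_S \to f_*\underline{\Z}_X \to L\big(f_*\cO_X^\times/\cO_S^\times\big)\to 1,
\]
since $L(\cO_S^\times)=\underline\Z_S$ and $L(f_*\cO_X^\times)=f_*\underline\Z_X$ (the latter because $L$ commutes with the pushforward of units along the affine map $f$, which one checks on the level of rings using that $U(B[t,1/t])$ decomposes compatibly). The quotient sheaf here is exactly $(f_*\Z)/\Z$.

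The second step is to take global sections. Taking $H^0_{\zar}(S,-)$ of the last sequence and comparing with the $L$ of the sequence \eqref{seq:U-Pic} (via Proposition \ref{LU-LPic}) should identify $L\cI(f)$ with $H^0_{\zar}(S,(f_*\Z)/\Z)$ up to a possible correction coming from $H^1_{\zar}$ of the constant sheaf. Here is the subtlety: $H^1_{\zar}(S,\underline\Z)$ need not vanish, so the naive global-sections argument only gives $L\cI(f)$ as a subquotient. To pin it down I would instead argue that the relevant sheaf sequence is already the sheafification of the presheaf sequence $Seq(\cI,-)$, so that $L\cI(f)$, being defined as a cokernel of a map of presheaves, maps to $H^0$ of the quotient sheaf; surjectivity and injectivity then follow by checking on stalks, where $f$ restricts to an extension of local rings and $L\Pic$ of a local ring vanishes, killing the obstruction. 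This is where the hypothesis that $f$ is affine and faithful is used: it guarantees the stalk of $f_*\cO_X^\times$ is the units of a ring extension of the local ring $\cO_{S,s}$, so the local computation reduces to the case $\Pic=0$.

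The third step is the comparison of topologies: $H^0_{\et}(S,(f_*\Z)/\Z)=H^0_{\nis}(S,(f_*\Z)/\Z)=H^0_{\zar}(S,\mathcal{G})$ where $\mathcal{G}$ is the Zariski sheafification of $(f_*\Z)/\Z$. The point is that $f_*\Z$ and $\Z$ are already étale sheaves whose restriction to the small Nisnevich/Zariski sites agrees on Henselian (resp. local) stalks with the naive quotient, because a connected component count is insensitive to Henselization; the discrepancy with $H^0_{\zar}$ of the \emph{naive} quotient presheaf is precisely the failure of sheafification, which is what makes Example \ref{node} possible. I would make this precise by showing that the étale quotient sheaf $(f_*\Z)/\Z$ restricted to the Zariski site is the sheafification of the Zariski-presheaf quotient, and that $H^0$ of a sheaf equals $H^0$ of its sheafification.

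The main obstacle I anticipate is the second step — controlling the $H^1_{\zar}(S,\underline\Z)$ obstruction and proving that $L\cI(f)$ is genuinely $H^0$ of the correct (sheafified, étale) quotient rather than of the presheaf quotient. Handling this cleanly will require either the stalk-wise reduction to the $\Pic=0$ case sketched above, or a direct identification of the contraction functor $L$ with sheafification in the étale topology; the constant-sheaf computation $LU=\Z$ and the known behavior $L\Pic(\text{local})=0$ are the two facts that make it work, and organizing them into the comparison of all three topologies is the delicate bookkeeping.
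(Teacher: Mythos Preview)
Your proposal has a genuine gap at the heart of the second step. You write that on Zariski stalks ``$L\Pic$ of a local ring vanishes, killing the obstruction.'' This is false: for the local ring $A_\wp$ of the node, $L\Pic(A_\wp)\cong\Z$ (as the paper itself notes in Remark~\ref{node}, $L\cI(A_\wp,B_\wp)=\Z$, which forces $L\Pic(A_\wp)\ne0$ via the sequence of Proposition~\ref{LU-LPic}). The vanishing result you need is \cite[2.5]{wei}: $L\Pic(A)=0$ for $A$ \emph{hensel} local, not merely local. Consequently your stalkwise argument must be carried out in the Nisnevich or \'etale topology from the start, not patched in afterwards. Relatedly, the claim in your third step that ``a connected component count is insensitive to Henselization'' is also wrong for $f_*\Z$: if $B$ is the normalization of the nodal local ring $A_\wp$, then $B$ is a domain but $B\otimes_{A_\wp}A_\wp^h$ has two components. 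This is precisely why the Zariski sheaf $(f_*\Z/\Z)_\zar$ and the \'etale sheaf $(f_*\Z/\Z)_\et$ differ, and why $H^0_\zar$ gives the wrong answer in Example~\ref{node}.

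The paper avoids this trap by never trying to identify $L\cI(f)$ with a Zariski $H^0$. Instead it builds a canonical map $a_f:L\cI(f)\to H^0_\et(S,f_*\Z/\Z)$ by sheafifying in the \'etale topology (Lemma~\ref{LI-stalk} computes the \'etale stalks), and then proves $a_f$ is an isomorphism by a direct 5-lemma comparison between the $L$ of sequence~\eqref{seq:U-Pic} and the \'etale cohomology sequence of $0\to\Z\to f_*\Z\to f_*\Z/\Z\to0$, using the identifications $LU(S)=H^0_\et(S,\Z)$ and $L\Pic(S)=H^1_\et(S,\Z)$ from \cite{wei}. The $H^1$ obstruction you worry about is exactly $L\Pic(S)$, and it is handled not by making it vanish but by matching it with the corresponding term in the other row. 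You also omit the construction of the natural splitting $L\cI(f)\to\cI(f[t,1/t])$ required for $\cI$ to be \emph{contracted} rather than merely acyclic; the paper obtains this (Proposition~\ref{sheaf-split}) by sheafifying Bass' splitting $t^n\!:\Z\to U(-[t,1/t])$ and passing to the quotient.
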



\begin{corollary}\label{LI(f)red}
$L\cI(f)\cong L\cI(f_\red)$.
\end{corollary}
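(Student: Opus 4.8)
The plan is to deduce Corollary \ref{LI(f)red} directly from Theorem \ref{thmSX}, using the fact that the contraction $L\cI(f)$ is now identified with an \'etale (equivalently Nisnevich) cohomology group that is manifestly insensitive to nilpotents. First I would invoke the topological invariance of the small \'etale site: the closed immersion $S_\red \hookrightarrow S$ induces an equivalence of \'etale topoi $S_\et \simeq (S_\red)_\et$ (see \cite{Milne}), and likewise $X_\et \simeq (X_\red)_\et$. Under this equivalence the morphism $f\colon X\to S$ corresponds to $f_\red\colon X_\red\to S_\red$, and the constant sheaf $\Z$ on $X$ pulls back to the constant sheaf $\Z$ on $X_\red$, so $f_*\Z$ on $S$ corresponds to $(f_\red)_*\Z$ on $S_\red$, compatibly with the inclusion of the constant sheaf $\Z$. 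Hence the quotient sheaf $(f_*\Z)/\Z$ corresponds to $((f_\red)_*\Z)/\Z$, and taking $H^0_\et$ on either side gives
\[
H^0_\et(S,(f_*\Z)/\Z) \;\cong\; H^0_\et(S_\red,((f_\red)_*\Z)/\Z).
\]
By Theorem \ref{thmSX} applied to both $f$ and $f_\red$, the left side is $L\cI(f)$ and the right side is $L\cI(f_\red)$, giving the desired isomorphism. The same argument works verbatim for the Nisnevich site, so the Nisnevich description is consistent.

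A second, more self-contained route — worth mentioning as an alternative and as the natural way to pin down naturality of the isomorphism — is to mimic the proof of Theorem \ref{LI-red} directly at the level of $\cI$. One reduces $X$ to $X_\red$ and $S$ to $S_\red$ in two steps, using Lemma \ref{MV} with the nilradical ideal at each stage when it happens to be contained in the subring, and otherwise using the exact sequence obtained by sheafifying $1\to\Unil\to\cI\to\cI_\red\to 0$ over $S$. In the scheme-theoretic setting one no longer has surjectivity of $U(S)\to U(S_\red)$, as flagged in the remark after Theorem \ref{LI-red}, so this sequence need not be short exact on global sections; but the point is only that $\Unil$ still has vanishing contraction (the argument of Example \ref{ex:Unil} is purely formal and uses only that $1+\nil$ is a contracted functor with $L=0$), so applying the second part of Lemma \ref{mor} to the sheafified sequence still yields $L\cI(f)\cong L\cI(f_\red)$.

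The main obstacle, such as it is, is bookkeeping rather than depth: one must be careful that the identification $f_*\Z$ (on $S$) $\leftrightarrow (f_\red)_*\Z$ (on $S_\red$) is the one compatible with the canonical maps from the constant sheaf $\Z$, so that the quotient sheaves match up and the resulting isomorphism on $H^0$ is the natural one induced by $\cI(f)\to\cI(f_\red)$. This is exactly the content of the square in the proof of Theorem \ref{LI-red} transported through the topological-invariance equivalence, so I would state it briefly and cite \cite{Milne} for the equivalence of \'etale topoi under nilpotent thickening. Everything else is immediate from Theorem \ref{thmSX}.
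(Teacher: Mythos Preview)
Your proposal is correct and matches the paper's intended argument: the corollary is stated without proof immediately after Theorem \ref{thmSX}, and the deduction you spell out---apply Theorem \ref{thmSX} to both $f$ and $f_\red$, then use the topological invariance of the small \'etale site under nilpotent thickenings to identify $H^0_\et(S,(f_*\Z)/\Z)$ with $H^0_\et(S_\red,((f_\red)_*\Z)/\Z)$---is exactly what the paper has in mind (cf.\ the remark after Theorem \ref{LI-red}, which points forward to this corollary as the scheme-theoretic extension). Your alternative route via $\Unil$ is also fine but unnecessary here.
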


We begin the proof of Theorem \ref{thmSX} by generalizing 
\eqref{I=H^0} to the \'etale and Nisnevich topologies on $S$.  
Recall that if $\cF$ is an \'etale sheaf on $S$ (a sheaf on $S_\et$)
then it is also a Nisnevich and a Zariski sheaf, and
$H_\et^0(S,\cF)=H_\nis^0(S,\cF)=H_\zar^0(S,\cF)=\cF(S)$.
This remark applies for example to the sheaves of units.
To avoid confusion,
it will be convenient to write $\cO^\times_S$ and $f_*\cO^\times_X$ 
for the \'etale sheaves $U\mapsto\Gamma(U,\cO_U)^\times$ and
$U\mapsto\Gamma(f^{-1}U,\cO_{f^{-1}U})^\times$,
instead of the traditional $\mathbb{G}_m$ and $f_*(\mathbb{G}_m|_{X_\et})$.
Of course, they are also sheaves for the Nisnevich topology on $S$.

\begin{lemma}\label{I=etaleH^0}
The Zariski quotient sheaf $f_*\cO^\times_X/\cO^\times_S$
is an \'etale sheaf. Consequently,
$\cI(f)\cong H_\et^0(S,f_*\cO^\times_X/\cO^\times_S)
       \cong H_\nis^0(S,f_*\cO^\times_X/\cO^\times_S)$.
\end{lemma}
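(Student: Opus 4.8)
The plan is to compare the quotient sheaf $\cF:=f_*\cO^\times_X/\cO^\times_S$ formed in the Zariski topology with the quotient $\cF_\et$ of the same two sheaves formed in the \'etale topology. The point is that $\cO^\times_S$ and $f_*\cO^\times_X$ are \emph{already} \'etale (indeed Nisnevich) sheaves on $S$, and that the first higher direct image of $\cO^\times_S$ under the change-of-topology map vanishes because a line bundle is locally trivial; this will force the two quotients to agree on Zariski opens.

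In detail, let $\epsilon\colon S_\et\to S_\zar$ be the canonical morphism of sites, so that $\epsilon_*$ is just restriction of an \'etale sheaf to the Zariski opens and is left exact. First I would note that $\cO^\times_S\hookrightarrow f_*\cO^\times_X$ is a monomorphism of \'etale sheaves, since faithfulness of the affine map $f$ is preserved under \'etale (hence flat) base change; let $\cF_\et$ be its cokernel in $\mathrm{Sh}(S_\et)$. Applying $\epsilon_*$ to $1\to\cO^\times_S\to f_*\cO^\times_X\to\cF_\et\to1$ gives an exact sequence
\[
1\to\cO^\times_S\to f_*\cO^\times_X\to\epsilon_*\cF_\et\to R^1\epsilon_*\cO^\times_S,
\]
where I have used that $\epsilon_*$ carries the \'etale sheaves of units $\cO^\times_S$ and $f_*\cO^\times_X$ to the Zariski sheaves of the same name. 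Now $R^1\epsilon_*\cO^\times_S$ is the Zariski sheafification of $U\mapsto H^1_\et(U,\cO^\times_U)=\Pic(U)$, and this sheafification vanishes because every line bundle is Zariski-locally trivial; hence $\epsilon_*\cF_\et\cong\coker(\cO^\times_S\to f_*\cO^\times_X)=\cF$ as Zariski sheaves. This is precisely the assertion that $\cF$ is (the restriction of) an \'etale sheaf, and evaluating at $S$ and invoking the faithful-affine form of \eqref{I=H^0} gives $H^0_\et(S,\cF_\et)=(\epsilon_*\cF_\et)(S)=\cF(S)=H^0_\zar(S,\cF)\cong\cI(f)$.

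For the Nisnevich statement I would run the identical argument with $\pi\colon S_\et\to S_\nis$ in place of $\epsilon$: here $R^1\pi_*\cO^\times_S$ is the Nisnevich sheafification of $U\mapsto\Pic(U)$, which again vanishes since the Nisnevich local rings are henselian and henselian local rings have trivial Picard group. So $\pi_*\cF_\et$ is the Nisnevich quotient sheaf $\cF_\nis$, whence $H^0_\nis(S,\cF_\nis)=H^0_\et(S,\cF_\et)\cong\cI(f)$, and composing $\epsilon=\rho\circ\pi$ with $\rho\colon S_\nis\to S_\zar$ likewise gives $\rho_*\cF_\nis=\cF$. The only ingredient here that is not formal manipulation of change-of-topology functors is the vanishing of $R^1\epsilon_*\cO^\times_S$ and $R^1\pi_*\cO^\times_S$, i.e.\ the triviality of $\Pic$ of a local, respectively henselian local, ring; I therefore expect no genuine obstacle, and the one point to watch is the identification of $\epsilon_*$ (and $\pi_*$) of the \'etale sheaves of units with the Zariski (and Nisnevich) sheaves of units, which is exactly what makes the two cokernels coincide.
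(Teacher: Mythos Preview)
Your argument is correct and rests on the same key fact as the paper's proof, namely Hilbert's Theorem~90 in the form $H^1_\zar(U,\Gm)=H^1_\et(U,\Gm)=\Pic(U)$. The paper packages this as a 5-lemma comparison of the Zariski and \'etale long exact cohomology sequences attached to $1\to\cO^\times_S\to f_*\cO^\times_X\to\cF\to1$, while you apply the change-of-topology pushforward $\epsilon_*$ and invoke $R^1\epsilon_*\cO^\times_S=0$ directly; your route is marginally more economical, since the 5-lemma version also needs the injectivity of $H^1_\zar(S,f_*\cO^\times_X)\to H^1_\et(S,f_*\cO^\times_X)$, which you bypass.
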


\begin{proof}
Since $H_\zar^1(S,\cO^\times_S)=H_\et^1(S,\cO^\times_S)$
and $H_\et^1(S,f_*\cO^\times_X)$ is a subgroup of 
$H_\et^1(X,\cO^\times_X)$, we have a commutative diagram:
\[\xymatrix @C=.15in{
H_\zar^0(S,\cO^\times_S)    \ar[r]\ar@{=}[d] & 
H_\zar^0(S,f_*\cO^\times_X) \ar[r]\ar@{=}[d] & 
H_\zar^0(S,f_*\cO^\times_X/\cO^\times_S) \ar[r]\ar[d] &
H_\zar^1(S,\cO^\times_S)    \ar[r]\ar@{=}[d] & 
H_\zar^1(S,f_*\cO^\times_X)       \ar[d]_{\textrm{into}} \\
H_\et^0(S,\cO^\times_S)    \ar[r] & 
H_\et^0(S,f_*\cO^\times_X) \ar[r] & 
H_\et^0(S,(f_*\cO^\times_X/\cO^\times_S)_\et)\ar[r] &
H_\et^1(S,\cO^\times_S)    \ar[r] & H_\et^1(S,f_*\cO^\times_X).
}\]
From the 5-lemma, we see that the middle vertical map is an
isomorphism, i.e., that $f_*\cO^\times_X/\cO^\times_S$
is an \'etale sheaf, and hence a Nisnevich sheaf.
The final assertion follows from \eqref{I=H^0}.
\end{proof}

\begin{notation}\label{notation}
Given a scheme $S$, we write $S[t]$ for $S\times\Spec(\Z[t])$;
there is a natural map $p^{S,t}:S[t]\to S$. When the base $S$ is clear
we simply write $p^t$, so that $p^t_*\cO^\times_{S[t]}$
denotes the direct image sheaf on $S$; it is both a Zariski and an \'etale
sheaf on $S$. Similarly, we write $S[t,1/t]$ for 
$S\times\Spec(\Z[t,1/t])$, with projection $p:S[t,1/t]\to S$,
and also write $p_*\cO^\times_{S[t,1/t]}$ for the direct image sheaf on $S$.
Given $f:X\to S$ then, by abuse of notation, we will also write 
$f_*p^t_*\cO^\times_{X[t]}$ for the direct image sheaf on $S$ 
associated to the composition $X[t]\to X\to S$, etc.

For notational simplicity, we shall write $\cO^\times$ and 
$f^T_*\cO^\times$ for the \'etale sheaves $\cO^\times_{S[t,1/t]}$ and 
$f[t,1/t]_*\cO^\times_{X[t,1/t]}$ on $S[t,1/t]$. 
Thus Lemma \ref{I=etaleH^0} yields the formula 
\[
\cI(f[t,1/t])\cong H_\et^0(S[t,1/t],f^T_*\cO^\times/\cO^\times)
\cong H_\et^0(S,p_*(f^T_*\cO^\times/\cO^\times)).
\]
Replacing $H_\et^0$ with $H_\nis^0$ yields an analogous formula.
\end{notation}

\begin{subremark}\label{node}
The analogue of the formulas 
$\cI(f[t,1/t]) \cong H_\et^0(S,p_*(f^T_*\cO^\times/\cO^\times))$
and $L\cI(A,B)=H_\et^0(\Spec A,(f_*\Z/\Z))$
fail for the Zariski cohomology. To see this,
consider the subring $A$ of $B=k[x]$ defining the node.
It is not hard to see that 
\[
\cI(f[t,1/t])\cong\Pic(A[t,1/t]) = \Pic(A)\oplus\Z
\quad \textrm{and} \quad 
L\cI(f)\cong L\Pic(A)\cong\Z
\]
(see \cite[2.2]{wei}), yet 
$H_\zar^0(S,p_*(f^T_*\cO^\times/\cO^\times)_\zar)=\Pic(A)$
and $H_\zar^0(S,(f_*\Z/\Z)_\zar)=0$.

A similar calculation for the local ring $A_\wp$ of the node
and $B_\wp$ the (semilocal) normalization of $A_\wp$ shows that
$L\cI(A_\wp,B_\wp)=\Z$.
\end{subremark}

Recall that a local ring $A$ is {\it hensel} if every finite
$A$-algebra $B$ is a direct product of local rings.
A Nisnevich sheaf on $\Spec(A)$ is zero if and only if it is zero on 
$\Spec(A^h_\wp)$ for every prime ideal $\wp$, where $A^h_\wp$ is
the henselization of the local ring $A_\wp$.

\begin{lemma}\label{LI-stalk}
If $A$ is a hensel local ring then $L\cI(A,B)=H^0(\Spec B,\Z)/\Z$.
\end{lemma}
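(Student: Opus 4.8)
The plan is to read $L\cI(A,B)$ off the exact sequence of Proposition~\ref{LU-LPic},
\[
1\to LU(A)\to LU(B)\to L\cI(f)\to L\Pic(A)\to L\Pic(B),
\]
after pinning down the outer terms. Since $A$ is local, $\Spec A$ is connected, so $LU(A)=H^0(\Spec A,\Z)=\Z$ by Example~\ref{ex:Unil}, and likewise $LU(B)=H^0(\Spec B,\Z)$; the map between them is the pullback on locally constant $\Z$-valued functions along $\Spec B\to\Spec A$, hence the diagonal embedding $\Z\hookrightarrow H^0(\Spec B,\Z)$ (injective as $B\neq0$). The sequence then produces
\[
0\to H^0(\Spec B,\Z)/\Z\to L\cI(f)\to\ker\bigl(L\Pic(A)\to L\Pic(B)\bigr)\to0,
\]
so the whole statement reduces to showing $L\Pic(A)=0$ for every henselian local ring $A$. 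This is the one place the henselian hypothesis is used, and I expect it to be the crux of the argument.

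To prove $L\Pic(A)=0$ for $A$ henselian local, I would induct on $\dim A$ over all henselian local rings, the case $\dim A=0$ being clear since $L\Pic(A)=L\Pic(A_\red)$ and a reduced Artinian local ring is a field. For the inductive step: as $L\Pic(A)=L\Pic(A_\red)$ we may assume $A$ reduced, and (reducing by a filtered-colimit argument, $L\Pic$ commuting with filtered colimits) that $A$ is noetherian with finite normalization $\bar A$; then $\bar A$ is a finite product of normal \emph{henselian local} domains. Let $\cc$ be the conductor of $A\subset\bar A$; it contains a nonzerodivisor, so $\dim A/\cc<\dim A$, and $A/\cc$ and $\bar A/\cc$ are (products of) henselian local rings. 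The conductor (Milnor) square for $A$ is compatible with those for $A[t]$ and $A[t,1/t]$, so applying $L$ to its Mayer--Vietoris sequence for units and $\Pic$ (Corollary~\ref{LF sequence}) gives
\[
LU(\bar A)\oplus LU(A/\cc)\to LU(\bar A/\cc)\to L\Pic(A)\to L\Pic(\bar A)\oplus L\Pic(A/\cc)\to L\Pic(\bar A/\cc).
\]
Here $L\Pic(\bar A)=0$ by \cite[1.5.2]{wei}, and $L\Pic(A/\cc)=L\Pic(\bar A/\cc)=0$ by the inductive hypothesis applied to the henselian local rings $A/\cc$ and the factors of $\bar A/\cc$, all of smaller dimension. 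Hence $L\Pic(A)\cong\coker\bigl(LU(\bar A)\oplus LU(A/\cc)\to LU(\bar A/\cc)\bigr)$.

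Finally, this cokernel vanishes because $LU(\bar A)=H^0(\Spec\bar A,\Z)\to H^0(\Spec\bar A/\cc,\Z)=LU(\bar A/\cc)$ is already surjective: each factor $\bar A_i$ of $\bar A$ is \emph{local}, so $\Spec\bar A_i$ is connected and meets the closed subscheme $\Spec\bar A/\cc$ in $\Spec(\bar A_i/\cc_i)$, which is empty or connected (a quotient of a henselian local ring is henselian local). Thus each component of $\Spec\bar A$ contains at most one component of $\Spec\bar A/\cc$, and restriction of locally constant functions is onto, forcing the above cokernel to be $0$. (This is exactly what fails for a general ring $A$: for the affine coordinate ring $A$ of the nodal cubic inside $B=k[x]$, $\bar A=k[x]$ is connected while $\bar A/\cc\cong k\times k$ is not, so the restriction map is not surjective and $L\Pic(A)\cong\Z$; see Remark~\ref{node}.) Granting $L\Pic(A)=0$, the short exact sequence of the first paragraph collapses to $L\cI(A,B)\cong H^0(\Spec B,\Z)/\Z$. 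To reiterate, the sole obstacle is the vanishing $L\Pic(A)=0$ for henselian local $A$; everything else is formal bookkeeping with Proposition~\ref{LU-LPic} and the connectedness of $\Spec A$.
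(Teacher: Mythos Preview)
Your overall strategy is exactly the paper's: plug into the exact sequence of Proposition~\ref{LU-LPic}, identify $LU(A)=\Z$ and $LU(B)=H^0(\Spec B,\Z)$, and conclude once you know $L\Pic(A)=0$. The paper's proof is a single line: it simply cites \cite[2.5]{wei} for the vanishing $L\Pic(A)=0$ when $A$ is hensel local, and reads off the answer.

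Where you diverge is in attempting an independent proof of $L\Pic(A)=0$ by induction on $\dim A$ through a conductor square. The induction itself is sound once you are in the noetherian, finite-normalization setting (your key observation that each factor $\bar A_i$ of $\bar A$ is local, so $\bar A_i/\cc_i$ is local and hence connected, is exactly what makes the $LU$ cokernel vanish). The weak point is the reduction step: ``reducing by a filtered-colimit argument \ldots\ that $A$ is noetherian with finite normalization.'' You need to exhibit an arbitrary hensel local ring as a filtered colimit of \emph{noetherian hensel local} rings with finite normalization, and this is not automatic. It can be done (write $A$ as a colimit of finitely generated $\Z$-subalgebras $A_0$, localize at $\mathfrak m\cap A_0$, and henselize; the resulting rings are excellent, hence Nagata), but it requires a nontrivial argument you have not supplied, plus some care that the induction on dimension interacts correctly with this colimit. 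A cleaner route, closer in spirit to \cite{wei}, is to use the identification $L\Pic(A)\cong H^1_\et(\Spec A,\Z)$ from \cite[5.5]{wei}: for $A$ hensel local this is $H^1(\mathrm{Gal}(\bar k/k),\Z)=\mathrm{Hom}_{\mathrm{cont}}(\mathrm{Gal}(\bar k/k),\Z)=0$, since a profinite group has no nontrivial continuous homomorphism to the discrete group $\Z$.
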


\begin{proof}
By \cite[2.5]{wei}, $L\Pic(A)=0$. Since $LU(A)=\Z$ and $LU(B)=H^0(\Spec B,\Z)$,
the sequence of Proposition \ref{LU-LPic} yields the result.
\end{proof}

\begin{subremark}\label{rem:hensel}
As noted in \cite[1.2.1]{wei},
$H^0(\Spec B,\Z)/\Z$ is a free abelian group for every $B$.
We saw in Corollary \ref{tfree} that $L\cI(f)$ is always torsionfree.
\end{subremark}

If we fix $f:X\to S$ and view $L\cI$ as the presheaf 
$U\mapsto L\cI(U,f^{-1}U)$ on the \'etale site of $S$, 
Lemma \ref{LI-stalk} says that
the associated \'etale sheaf is $f_*\Z/\Z$. Therefore we have
a canonical map $a_f:L\cI(f)\to H_\et^0(S,f_*\Z/\Z)$.

\begin{theorem}\label{LI=H^0}
The canonical map $a_f:L\cI(f)\to H_\et^0(S,f_*\Z/\Z)$
is an isomorphism.
\end{theorem}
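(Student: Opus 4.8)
The plan is to show that the canonical map $a_f\colon L\cI(f)\to H^0_\et(S,f_*\Z/\Z)$ is both injective and surjective, working Nisnevich-locally on $S$ and reducing the ``local'' computation to what has already been established. First I would record the sheaf-theoretic input: by Notation \ref{notation} and Lemma \ref{I=etaleH^0}, the functor $U\mapsto L\cI(U,f^{-1}U)$ on $S_\et$ (equivalently $S_\nis$) has associated sheaf $f_*\Z/\Z$, precisely because Lemma \ref{LI-stalk} identifies its stalk at a point $\wp$ — after passing to the henselization $A^h_\wp$ — with $H^0(\Spec B^h,\Z)/\Z$, which is exactly the stalk of $f_*\Z/\Z$. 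Since $f_*\Z/\Z$ is the \'etale (hence Nisnevich) sheafification of the presheaf $L\cI$, there is a canonical presheaf-to-sheaf map $a_f$, and it suffices to check it is an isomorphism. The statement is local for the Nisnevich topology on $S$, so I may assume $S=\Spec(A)$ with $A$ hensel local; in that case $H^0_\et(S,f_*\Z/\Z)=H^0(\Spec B,\Z)/\Z$, and Lemma \ref{LI-stalk} says $L\cI(A,B)=H^0(\Spec B,\Z)/\Z$ as well, via the sequence of Proposition \ref{LU-LPic} (using $L\Pic(A)=0$, $LU(A)=\Z$, $LU(B)=H^0(\Spec B,\Z)$). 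So in the hensel local case $a_f$ is visibly the identity.

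The real work is passing from the hensel local case to general $S$; this is where a Mayer--Vietoris / Nisnevich-descent argument is needed, since $L\cI$ is only a presheaf. The approach I favor is to show directly that $L\cI$ already satisfies enough descent. For injectivity: an element $\xi\in L\cI(f)$ mapping to zero in $H^0_\et(S,f_*\Z/\Z)$ dies Nisnevich-locally, so there is a Nisnevich cover $\{U_i\to S\}$ with $\xi|_{U_i}=0$ in $L\cI(U_i,f^{-1}U_i)$; I then need to conclude $\xi=0$ in $L\cI(f)$, i.e.\ that $L\cI$ is a \emph{separated} presheaf. For surjectivity: given a section $s$ of $f_*\Z/\Z$ over $S$, I pull back the locally-defined preimages in $L\cI$ on a Nisnevich cover and must glue them, i.e.\ I need the presheaf $U\mapsto \cI(f[t,1/t]|_U)/(\text{split part})$ to satisfy Nisnevich descent well enough to produce a global class. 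The cleanest route is to prove that $\cI(f[t,1/t])\cong H^0_\et(S,p_*(f^T_*\cO^\times/\cO^\times))$ (already in Notation \ref{notation}) is compatible with the splitting of Definition \ref{def:contracted}, and that the sheaf $p_*(f^T_*\cO^\times/\cO^\times)$ decomposes as $f_*\cO^\times_X/\cO^\times_S$ twice, $f_*\Z/\Z$, plus the $N$-parts — in other words, prove the Main Theorem's decomposition directly at the level of \'etale sheaves on $S$ and then take $H^0_\et$. Then $a_f$ is the projection onto the $f_*\Z/\Z$ summand, and the theorem follows from the stalkwise computation plus left-exactness of $H^0_\et$.

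The main obstacle I anticipate is exactly this gluing step: showing that the contracted-functor splitting $h(f)$ of Definition \ref{def:contracted} can be chosen compatibly over a Nisnevich cover, equivalently that $L\cI$ — or better, the relevant quotient sheaf on $S[t,1/t]$ pushed down to $S$ — is genuinely a sheaf and not merely has the right stalks. The tempting shortcut, ``a presheaf with the right stalks whose sheafification is $f_*\Z/\Z$ equals $f_*\Z/\Z$,'' is false in general, so I expect to need an honest descent argument. The way I would close this gap is to use the identification of Notation \ref{notation} together with Lemma \ref{I=etaleH^0} applied to $S[t,1/t]$: there $\cI(f[t,1/t])$ is \emph{exactly} a global-sections group of an \'etale sheaf, so it satisfies descent on the nose; the subgroup $\cI(f)\oplus N\cI(f)\oplus N\cI(f)$ arising from $F(f)\oplus N_tF(f)\oplus N_{1/t}F(f)$ is likewise sheaf-theoretic (it is the image of the sheaf map $p^t_*\oplus p^{1/t}_*$), hence so is the quotient $L\cI(f)=H^0_\et(S,\,\coker)$. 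Identifying that cokernel sheaf with $f_*\Z/\Z$ — which is a purely local statement, handled by Lemma \ref{LI-stalk} on henselizations — then completes the proof, and simultaneously yields the Nisnevich formula $L\cI(f)=H^0_\nis(S,f_*\Z/\Z)$ since $f_*\Z/\Z$ is already a Nisnevich sheaf.
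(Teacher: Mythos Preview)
Your approach is genuinely different from the paper's, and it has a gap exactly where you flagged the main obstacle. The paper argues by the 5-lemma: it places the exact sequence of Proposition~\ref{LU-LPic} over the \'etale cohomology sequence of $0\to\Z\to f_*\Z\to f_*\Z/\Z\to0$, using the identifications $LU\cong H^0_\et(-,\Z)$ and, crucially, $L\Pic\cong H^1_\et(-,\Z)$ from \cite[5.5]{wei} on the outer four verticals. The only real work is checking that the square linking $L\cI(f)\to L\Pic(S)$ with the connecting map $H^0_\et(S,f_*\Z/\Z)\to H^1_\et(S,\Z)$ commutes; this is done via a morphism of triangles $Rp_*(\cO^\times)\to\Z$, $Rp_*(f^T_*\cO^\times)\to f_*\Z$, etc., in the derived category of \'etale sheaves on $S$. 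The input $L\Pic\cong H^1_\et(-,\Z)$ is precisely what absorbs the $H^1$-obstruction that blocks a na\"{\i}ve descent argument.

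Your proposed resolution does not close the gap. You assert that since $\cI(f[t,1/t])=H^0_\et(S,\mathcal{A})$ for a suitable sheaf $\mathcal{A}$ and the image of $\cI(f[t])\times\cI(f[1/t])$ is ``likewise sheaf-theoretic'', it follows that ``the quotient $L\cI(f)=H^0_\et(S,\coker)$''. But $H^0_\et$ is only \emph{left} exact: the cokernel of a map on $H^0$'s injects into $H^0$ of the cokernel sheaf, with defect measured by an $H^1_\et$ of the image sheaf. (Your final sentence even appeals to ``left-exactness of $H^0_\et$'', which points the wrong way --- you need right-exactness here.) To make your route work you would have to either construct an explicit sheaf-level splitting of $p_*(f^T_*\cO^\times/\cO^\times)\twoheadrightarrow f_*\Z/\Z$ --- this is exactly Proposition~\ref{sheaf-split}, built from Bass' section $t_S$, $f_*t_X$, and placed \emph{after} the present theorem in the paper --- or control the relevant $H^1_\et$ term directly, which amounts to invoking $L\Pic\cong H^1_\et(-,\Z)$ as the paper does.
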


\begin{proof}
We claim there is a commutative diagram whose rows are
the exact sequence of Proposition \ref{LU-LPic} and the cohomology sequence
associated to $\Z\to f_*\Z\to f_*\Z/\Z$:
\[\xymatrix{
LU(S) \ar[r]\ar@{=}[d] & LU(X) \ar[r]\ar@{=}[d] &
L\cI(f) \ar[r]\ar[d]^{a_f} & 
L\Pic(S) \ar[r]\ar@{=}[d] & L\Pic(X) \ar@{=}[d]
\\
H_\et^0(S,\Z) \ar[r] & H_\et^0(X,\Z) \ar[r]&
H_\et^0(S,f_*\Z/\Z) \ar[r] & H_\et^1(S,\Z) \ar[r] & H_\et^1(X,\Z).
}\]
Given this claim, the theorem follows from the 5-lemma.

The left three vertical maps are the canonical maps from the
evident presheaves to the global sections of the associated sheaves,
so the left two squares commute. Since the right two vertical maps
are the natural isomorphisms of \cite[5.5]{wei}, the right square
also commutes.  Thus we only need to show that the remaining square commutes.

Recall from \ref{notation} that $\cO^\times$ and $f^T_*\cO^\times$ 
are the \'etale sheaves $\cO^\times_{S[t,1/t]}$ and 
$f[t,1/t]_*\cO^\times_{X[t,1/t]}$ on $S[t,1/t]$.
The sheafification of $A[t,1/t]^\times\to H^0(S,\Z)$ on $S$ is a
map $\partial_S:p_*\cO^\times\to\Z$; it induces a map
$Rp_*\cO^\times\to\Z$ in the derived category of \'etale sheaves.
Similarly, the sheafification of $B[t,1/t]^\times\to H^0(S,f_*\Z)$ 
on $S$ induces a  map $f_*\partial_X:Rp_*(f^T_*\cO^\times)\to f_*\Z$.
Thus we have a morphism of triangles in the derived category.
\addtocounter{equation}{-1}
\begin{subequations}
\renewcommand{\theequation}{\theparentequation.\arabic{equation}}
\begin{equation}\label{eq:Rp_*}
\xymatrix{
Rp_*(\cO^\times) \ar[r]\ar[d]^{\partial_S}&
Rp_*(f^T_*\cO^\times) \ar[r]\ar[d]^{f_*\partial_X} &
Rp_*(f^T_*\cO^\times/\cO^\times) \ar[r]\ar[d] &
Rp_*(\cO^\times)[1] \ar[d]^{\partial_S} \\
\Z \ar[r] & f_*\Z \ar[r] & (f_*\Z)/\Z \ar[r] & \Z[1].
}\end{equation}
Note that $H_\et^0(S,Rp_*(\cO^\times)[1])=H_\et^1(S[t,1/t],\cO^\times)
=\Pic(S[t,1/t])$
and, by Lemma \ref{I=etaleH^0},  
\begin{equation}\label{eq:I(f[T])}
H_\et^0(S,Rp_*(f^T_*\cO^\times/\cO^\times)) =
H_\et^0(S[t,1/t],f^T_*\cO^\times/\cO^\times)=\cI(f[t,1/t]).
\end{equation}
\end{subequations}
Thus applying $H_\et^0$ to the right-hand square in \eqref{eq:Rp_*}
yields the commutative square
\[\xymatrix @R=.25in @C=.25in{
\cI(f[t,1/t]) \ar[r]\ar[d] & \Pic(S[t,1/t])\ar[d] \\
H_\et^0(S,f_*\Z/\Z) \ar[r] & H_\et^1(S,\Z).
}\]
The left map factors as 
$\cI(f[t,1/t])\to L\cI(f)\map{a_f} H_\et^0(S,f_*\Z/\Z)$,
and the right map factors as 
$\Pic(S[t,1/t])\to L\Pic(S)\cong H_\et^1(S,\Z)$.
The top map is the map in \eqref{seq:U-Pic}, fitting into the
commutative square
with surjective vertical maps
\[\xymatrix @R=.25in @C=.25in{
\cI(f[t,1/t]) \ar[r]\ar[d] & \Pic(S[t,1/t])\ar[d] \\
L\cI(f)\ar[r] &  L\Pic(S),
}\]
which is implicit in Proposition \ref{LU-LPic}. The claim follows.
\end{proof}

\begin{corollary}
The Nisnevich quotient sheaf $f_*\Z/\Z$ is an \'etale sheaf.
\end{corollary}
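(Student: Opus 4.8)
The plan is to argue exactly as in Lemma~\ref{I=etaleH^0}, comparing the Nisnevich and \'etale cohomology sequences attached to the short exact sequence $0\to\Z\to f_*\Z\to (f_*\Z)/\Z\to0$. Let $\epsilon\colon S_\et\to S_\nis$ be the canonical morphism of sites; thus $\epsilon_*$ is restriction of \'etale sheaves to the Nisnevich site, and $\epsilon^*$ is \'etale sheafification of Nisnevich sheaves. Because the constant sheaf $\Z$ and the sheaf $f_*\Z$ (which assigns $H^0(f^{-1}U,\Z)$ to $U$) have the same sections in the Zariski, Nisnevich and \'etale topologies, we have $\epsilon_*\Z=\Z$ and $\epsilon_*(f_*\Z)=f_*\Z$. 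Applying the exact functor $\epsilon^*$ to the Nisnevich sequence $0\to\Z\to f_*\Z\to(f_*\Z/\Z)_\nis\to0$ then identifies $\epsilon^*\bigl((f_*\Z/\Z)_\nis\bigr)$ with the \'etale quotient sheaf $(f_*\Z/\Z)_\et$, so for every $U$ in the site we obtain a map from the Nisnevich cohomology sequence of $0\to\Z\to f_*\Z\to(f_*\Z/\Z)_\nis\to0$ to the \'etale cohomology sequence of $0\to\Z\to f_*\Z\to(f_*\Z/\Z)_\et\to0$.

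The substitute for the input ``$\Pic_\zar=\Pic_\et$'' used in Lemma~\ref{I=etaleH^0} is the vanishing $H^1_\et(Y,\Z)=0$ for every scheme $Y$. This is standard: from $0\to\Z\to\mathbb Q\to\mathbb Q/\Z\to0$ one gets $H^1_\et(Y,\mathbb Q)=0$ (a profinite group admits no nonzero continuous homomorphism to the discrete torsionfree group $\mathbb Q$), while $\Gamma_\et(Y,\mathbb Q)\twoheadrightarrow\Gamma_\et(Y,\mathbb Q/\Z)$ since locally constant $\mathbb Q/\Z$-valued functions lift to $\mathbb Q$-valued ones. Consequently $H^1_\et(S,f_*\Z)=0$ too, because the Leray sequence for $f$ embeds it into $H^1_\et(X,\Z)=0$; and the Leray sequence for $\epsilon$ embeds $H^1_\nis(S,\epsilon_*\cF)$ into $H^1_\et(S,\cF)$ for every \'etale sheaf $\cF$, so taking $\cF=\Z$ and $\cF=f_*\Z$ yields $H^1_\nis(S,\Z)=0=H^1_\nis(S,f_*\Z)$. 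All of this remains true with $S$ replaced by any $U$ in the site.

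Now the five lemma finishes things: in the comparison over $U$ of the two five-term exact sequences
\[
H^0(U,\Z)\to H^0(U,f_*\Z)\to H^0\bigl(U,f_*\Z/\Z\bigr)\to H^1(U,\Z)\to H^1(U,f_*\Z)
\]
(Nisnevich on one side, \'etale on the other), the two left vertical maps are isomorphisms (same $H^0$ of $\Z$ and of $f_*\Z$) and the two right vertical maps are isomorphisms between groups that all vanish, so the middle map $H^0_\nis\bigl(U,(f_*\Z/\Z)_\nis\bigr)\to H^0_\et\bigl(U,(f_*\Z/\Z)_\et\bigr)$ is an isomorphism for every $U$. Hence the Nisnevich sheaf $(f_*\Z/\Z)_\nis$ agrees with the restriction of the \'etale sheaf $(f_*\Z/\Z)_\et$, i.e.\ the Nisnevich quotient sheaf $f_*\Z/\Z$ is an \'etale sheaf. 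I expect the only real obstacle to be the bookkeeping --- checking that $\epsilon^*$ carries the Nisnevich quotient to the \'etale quotient and that the four outer columns really are isomorphisms --- since the cohomological content, $H^1_\et(-,\Z)=0$, is entirely standard and is what makes the argument collapse, just as $\Pic_\zar=\Pic_\et$ did before. (Alternatively, one could check the assertion on Nisnevich stalks: reducing to the Noetherian, finite-type case via Lemma~\ref{colim}, both sheaves have stalk $H^0\bigl(\Spec(B\otimes_A\cO^h_{S,\wp}),\Z\bigr)/\Z$ at $\wp$ --- for the Nisnevich quotient by Lemma~\ref{LI-stalk} applied to the hensel local ring $\cO^h_{S,\wp}$, and for the restriction of the \'etale quotient by a diagram chase using $H^1_\et(\cO^h_{S,\wp},\Z)=0$.)
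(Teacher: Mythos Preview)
Your main argument fails at its key cohomological input: it is \emph{not} true that $H^1_\et(Y,\Z)=0$ for every scheme $Y$. The paper itself (in the proof of Theorem~\ref{LI=H^0}, citing \cite[5.5]{wei}) identifies $H^1_\et(S,\Z)$ with $L\Pic(S)$, and Example~\ref{node} computes $L\Pic$ of the nodal curve to be $\Z$; so $H^1_\et$ of the node with $\Z$-coefficients is $\Z\ne0$, and from your own short exact sequence $H^1_\et$ with $\mathbb{Q}$-coefficients is then nonzero as well. The flaw in the ``profinite group'' step is that $H^1_\et(Y,M)$ agrees with continuous $\operatorname{Hom}$ out of the \'etale fundamental group only when $Y$ is normal (or more generally geometrically unibranch); the node is precisely the kind of scheme where this breaks down. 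With that vanishing gone, the fourth and fifth columns of your five-lemma diagram are between groups that need not be zero, and the argument does not close as written.

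Your parenthetical alternative --- check on Nisnevich stalks --- is essentially the paper's proof, which is a single sentence: by Theorem~\ref{LI=H^0} one has $H^0_\et(S,(f_*\Z)/\Z)=L\cI(f)$ for every $S$, and by Lemma~\ref{LI-stalk} this equals $H^0(\Spec B,\Z)/\Z$ when $S=\Spec(A)$ is hensel local, which is also the Nisnevich stalk of the Nisnevich quotient at that point. Your five-lemma route can in fact be repaired, but the correct input is not that the $H^1$ groups vanish; one needs instead that $H^1_\nis(U,\Z)\to H^1_\et(U,\Z)$ is an isomorphism and $H^1_\nis(U,f_*\Z)\to H^1_\et(U,f_*\Z)$ is injective. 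The Leray spectral sequence for $\epsilon$ supplies both once one knows $H^1_\et(\Spec A^h,\Z)=0$ for hensel local $A^h$ (equivalently $L\Pic(A^h)=0$, \cite[2.5]{wei}) --- and that is exactly the content behind Lemma~\ref{LI-stalk}, so the repaired argument collapses to the paper's.
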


\begin{proof}
By Lemma \ref{LI-stalk}, it suffices to observe that if 
$S$ is hensel local we have $L\cI(f)=H^0_\et(S,f_*\Z/\Z)$.
\end{proof}

It remains to show that $\cI$ is a contracted functor.
Sheafifying the sequence $Seq(U,1_S)$ yields the sequence of sheaves on $S$:
\begin{equation}\label{seq:p_*}
1\to \cO^\times_S \map{\Delta} 
p^t_*(\cO^\times_{S[t]})\times p^{1/t}_*(\cO^\times_{S[1/t]}) 
\map{\pm} p_*(\cO^\times_{S[t,1/t]}) \map{\partial_S} \Z\to 1.
\end{equation}
In addition,
Bass' contraction $t_A:H^0(\Spec A,\Z)\to A[t,1/t]^\times$
is natural in $A$, so we may sheafify it to obtain a morphism 
$t_S:\Z\to p_*(\cO^\times_{S[t,1/t]})$ of (Zariski or \'etale) sheaves on $S$. 

\begin{lemma}\label{sheaf-Seq}
The sequence \eqref{seq:p_*} of sheaves on $S$ is split exact,
with splitting $t_S$.
\end{lemma}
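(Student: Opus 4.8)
The goal is to show that the sequence \eqref{seq:p_*} of sheaves on $S$,
\[
1\to \cO^\times_S \map{\Delta}
p^t_*(\cO^\times_{S[t]})\times p^{1/t}_*(\cO^\times_{S[1/t]})
\map{\pm} p_*(\cO^\times_{S[t,1/t]}) \map{\partial_S} \Z\to 1,
\]
is exact and split by $t_S$. The key point is that this is a \emph{local} question: a sequence of sheaves is exact precisely when it is exact on stalks (in the \'etale topology, on strict henselizations; or, since all these are \'etale sheaves, it suffices to check on the henselizations $A_\wp^h$ by Lemma \ref{LI-stalk}'s remark). So the whole statement reduces to the corresponding statement for rings $A$ that are (strictly) hensel local. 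The splitting $t_S$ is already a map of sheaves, being the sheafification of Bass' natural contraction $t_A$, so the only thing to verify is that on stalks the sequence $Seq(U,1_A)$ is split exact with that splitting.

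\textbf{Key steps.} First I would invoke that \eqref{seq:p_*} is by construction the sheafification of the presheaf sequence $U\mapsto Seq(U,1_U)$: this is literally the four-term sequence
\[
1\to U(A)\to U(A[t])\times U(A[1/t])\to U(A[t,1/t])\to LU(A)\to 1
\]
together with the identification $LU(A)=H^0(\Spec A,\Z)$ from Example \ref{ex:Unil}. Second, since sheafification is exact, it suffices to check exactness of this presheaf sequence after passing to stalks, i.e.\ for $A$ hensel local — indeed a local ring, since for a (connected) local ring $H^0(\Spec A,\Z)=\Z$ and $\partial_A$ is Bass' degree map. Third, for $A$ local this is exactly Bass' theorem \cite[XII.7.8]{bass} (see also \cite[III.4.1.3]{WK}): $U$ is a contracted functor, $Seq(U,1_A)$ is split exact, and the splitting is Bass' natural map $t_A\colon\Z\to U(A[t,1/t])$ sending $n\mapsto t^n$. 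Fourth, since both $t_S$ and the maps in \eqref{seq:p_*} are sheafifications of natural transformations, and sheafification is functorial, the identity $\partial_S\circ t_S=\mathrm{id}$ and the exactness assertions, being true on all stalks, hold as statements about the sheaves on $S$. Hence \eqref{seq:p_*} is split exact with splitting $t_S$.

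\textbf{Main obstacle.} There is no serious obstacle: the content is entirely Bass' classical computation, and everything else is the formal observation that exactness and the splitting relation can be checked stalkwise. The one point requiring a little care is the bookkeeping at the right-hand end — making sure that the sheafification of the presheaf $U\mapsto LU(U)=H^0(U,\Z)$ really is the constant sheaf $\Z$ (equivalently, that the stalk of $LU$ at $\wp$ is $LU(A_\wp^h)=\Z$, which holds because $\Spec A_\wp^h$ is connected), and that under this identification the sheafified boundary map is $\partial_S$ and the sheafified Bass contraction is $t_S$. All of this is immediate from Example \ref{ex:Unil} and the naturality of Bass' constructions, so the proof is short.
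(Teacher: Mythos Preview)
Your proposal is correct and takes essentially the same approach as the paper: both reduce to Bass's classical result \cite[XII.7.8]{bass} that $Seq(U,R)$ is naturally split exact for every commutative ring $R$. The only difference is that the paper checks this directly on affine opens $\Spec(R)$ rather than on stalks, which is slightly more economical since Bass's theorem already holds for arbitrary $R$ and no reduction to (hensel) local rings is needed.
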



\begin{proof}
On an affine open $\Spec(R)$ of $S$, this is just the sequence
\[
1 \to R^\times \map{\Delta} R[t]^\times\times R[1/t]^\times  
\map{\pm} R[t,1/t]^\times \map{\partial_R}\Z\to1.
\]
The fact that it is exact, and naturally split by $t_S$
is proven in \cite[XII.7.8]{bass}; see \cite[7.2]{wei}.
\end{proof}


\begin{corollary}\label{cor:f_*}
Given a faithful affine map $f:X\to S$,
the direct image of the sequence \eqref{seq:p_*} on $X$ 
is a split exact sequence of (Zariski or \'etale) sheaves 
on $S$, with splitting $f_*t_X$:
\[
1\to f_*\cO^\times_X \map{\Delta} 
f_*p^t_*(\cO^\times_{X[t]})\times f_*p^{1/t}_*(\cO^\times_{X[1/t]}) 
\map{\pm} f_*p_*(\cO^\times_{X[t,1/t]}) \map{f_*\partial_X} f_*\Z\to 1.
\]
\end{corollary}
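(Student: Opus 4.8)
The plan is to obtain Corollary~\ref{cor:f_*} as a direct consequence of Lemma~\ref{sheaf-Seq} together with the fact that $f_*$ is exact on the relevant sheaves. First I would observe that the sequence \eqref{seq:p_*} is a sequence of \'etale (equivalently Zariski or Nisnevich) sheaves on $S$, and that it is split exact by Lemma~\ref{sheaf-Seq} with explicit splitting $t_S$. Now replace the base $S$ by $X$: the very same lemma, applied to $X$ in place of $S$, gives that the analogous sequence of sheaves on $X$
\[
1\to \cO^\times_X \map{\Delta}
p^t_*(\cO^\times_{X[t]})\times p^{1/t}_*(\cO^\times_{X[1/t]})
\map{\pm} p_*(\cO^\times_{X[t,1/t]}) \map{\partial_X} \Z\to 1
\]
is split exact, with splitting $t_X:\Z\to p_*(\cO^\times_{X[t,1/t]})$.

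The second step is to push this sequence forward along $f:X\to S$. Because $f$ is an affine morphism, $f_*$ is exact on quasi-coherent sheaves; for the \'etale (or Zariski, or Nisnevich) sheaves of units and the constant sheaf $\Z$ appearing here, $f_*$ is in any case left exact, and the existence of the section $t_X$ makes exactness on the right automatic: applying $f_*$ to a split short exact sequence of sheaves yields a split short exact sequence, since a splitting is a purely diagrammatic datum preserved by any additive functor. Concretely, $f_*$ applied to the splitting $t_X$ produces $f_*t_X:f_*\Z\to f_*p_*(\cO^\times_{X[t,1/t]})$, which splits $f_*\partial_X$. It remains only to identify $f_*$ of each term with the sheaf named in the statement: $f_*(p^t_*\cO^\times_{X[t]})$ is by definition (see Notation~\ref{notation}) the direct image sheaf $f_*p^t_*\cO^\times_{X[t]}$ on $S$ associated to the composite $X[t]\to X\to S$, and similarly for the other terms and for $f_*\Z$; here one uses that $p^{S,t}\circ(f\times 1)= f\circ p^{X,t}$, so the two ways of reading $f_*p^t_*\cO^\times_{X[t]}$ coincide.

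There is essentially no obstacle: the only point requiring a word of care is the exactness of $f_*$ on the right-hand end of the sequence. I would handle this by simply invoking the splitting, rather than any cohomological vanishing for $f_*$ — an additive functor carries a split short exact sequence to a split short exact sequence, and that is all that is used. (If one prefers, the same conclusion follows because $f$ is affine and the sequence \eqref{seq:p_*} on $X$, being split, has no higher $R^if_*$ obstruction; but the splitting argument is cleanest and avoids any discussion of $R^if_*$ in the \'etale topology.) The proof is therefore: apply Lemma~\ref{sheaf-Seq} with base $X$, then apply the exact (in fact splitting-preserving) functor $f_*$, and finally match the terms against Notation~\ref{notation}.
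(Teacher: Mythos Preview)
Your proof is correct and is exactly the intended argument: the paper states Corollary~\ref{cor:f_*} without proof precisely because it follows from Lemma~\ref{sheaf-Seq} applied with base $X$ and the observation that any additive functor (here $f_*$) preserves split exact sequences. Your care about right-exactness via the splitting, rather than via properties of $f_*$ on non-quasi-coherent sheaves, is the right way to phrase it.
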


The global sections of the sequences in \ref{sheaf-Seq} and \ref{cor:f_*}
are of course Bass' sequences $Seq(U,S)$ and $Seq(U,X)$.
By \ref{notation}, the sheafification of $\cI(f[t,1/t])$ on $S$
is $p_*(f^T_*\cO^\times/\cO^\times)$. Theorem \ref{LI=H^0} says that
the sheafification of $\cI(f[t,1/t])\to L\cI(f)$ on $S$
is a canonical map $\partial_f: p_*(f^T_*\cO^\times/\cO^\times) \to f_*\Z/\Z$.

\begin{proposition}\label{sheaf-split}
The map $\partial_f:p_*(f^T_*\cO^\times/\cO^\times) \to f_*\Z/\Z $ 
is split by a natural map of sheaves on $S$:
\[
t_f:f_*\Z/\Z\to p_*(f^T_*\cO^\times/\cO^\times).
\]
\end{proposition}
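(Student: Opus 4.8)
The plan is to obtain $t_f$ by pushing the splitting $f_*t_X$ of Corollary~\ref{cor:f_*} through the quotient map $f^T_*\cO^\times\to f^T_*\cO^\times/\cO^\times$ on $S[t,1/t]$, and then to verify that the result splits $\partial_f$ by applying the $0$-th cohomology sheaf functor to the morphism of triangles \eqref{eq:Rp_*}. Write $q\colon p_*(f^T_*\cO^\times)\to p_*(f^T_*\cO^\times/\cO^\times)$ for the map obtained by applying $p_*$ to that quotient, and $\pi\colon f_*\Z\to f_*\Z/\Z$ for the quotient map on $S$. Recall from Corollary~\ref{cor:f_*} that $f_*\partial_X\circ f_*t_X=\mathrm{id}$, where $f_*t_X\colon f_*\Z\to f_*p_*\cO^\times_{X[t,1/t]}=p_*(f^T_*\cO^\times)$. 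First I would set $s:=q\circ f_*t_X\colon f_*\Z\to p_*(f^T_*\cO^\times/\cO^\times)$, prove that $s$ kills the canonical map $c\colon\Z\to f_*\Z$, and then define $t_f\colon f_*\Z/\Z\to p_*(f^T_*\cO^\times/\cO^\times)$ to be the unique factorization $s=t_f\circ\pi$; since $q$, $f_*t_X$, $c$, $\pi$ are natural in $f$ and a factorization through the epimorphism $\pi$ is unique, $t_f$ is then natural in $f$.

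The vanishing $s\circ c=0$ is where the extension $A\hookrightarrow B$ enters, and I would deduce it from the naturality in $A$ of Bass' contraction $t_A\colon H^0(\Spec A,\Z)\to A[t,1/t]^\times$, recalled just before Lemma~\ref{sheaf-Seq}. Applied to $A\hookrightarrow B$ and sheafified on $S$, this naturality gives $f_*t_X\circ c=\iota\circ t_S$, where $t_S\colon\Z\to p_*\cO^\times_{S[t,1/t]}$ is the splitting of Lemma~\ref{sheaf-Seq} and $\iota\colon p_*\cO^\times_{S[t,1/t]}\to p_*(f^T_*\cO^\times)$ is $p_*$ applied to $\cO^\times_{S[t,1/t]}\to f^T_*\cO^\times$. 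Hence $s\circ c=q\circ\iota\circ t_S$, and $q\circ\iota$ is $p_*$ applied to the composite $\cO^\times_{S[t,1/t]}\to f^T_*\cO^\times\to f^T_*\cO^\times/\cO^\times$, which is zero; therefore $s\circ c=0$ and $t_f$ is well defined.

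To check $\partial_f\circ t_f=\mathrm{id}$, I would apply $\mathcal H^0$ to \eqref{eq:Rp_*}. Using $\mathcal H^0(Rp_*\mathcal G)=p_*\mathcal G$ for a sheaf $\mathcal G$, that $\mathcal H^0$ of the middle arrow of the triangle is $q$, that $\mathcal H^0$ of the middle vertical map is the map $f_*\partial_X$ of Corollary~\ref{cor:f_*}, and the identification — furnished by Theorem~\ref{LI=H^0} together with \eqref{eq:I(f[T])} — of $\mathcal H^0$ of the right-hand vertical map with $\partial_f$, one extracts a commutative square of sheaves on $S$ with horizontal arrows $q$, $\pi$ and vertical arrows $f_*\partial_X$, $\partial_f$. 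Then
\[
\partial_f\circ s=\partial_f\circ q\circ f_*t_X=\pi\circ f_*\partial_X\circ f_*t_X=\pi=\partial_f\circ t_f\circ\pi,
\]
and since $\pi$ is an epimorphism of sheaves this forces $\partial_f\circ t_f=\mathrm{id}$.

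I expect the main obstacle to be the identification in the previous paragraph of $\mathcal H^0$ of the derived map $Rp_*(f^T_*\cO^\times/\cO^\times)\to f_*\Z/\Z$ with the map $\partial_f$ defined before the proposition; this is implicit in the proof of Theorem~\ref{LI=H^0}, but it may be cleaner to bypass \eqref{eq:Rp_*} and instead verify $\partial_f\circ t_f=\mathrm{id}$ on Nisnevich stalks. Over a henselization $A^h_\wp$ the corresponding stalk of $p_*(f^T_*\cO^\times/\cO^\times)$ is $\cI(f'[t,1/t])$ by Lemma~\ref{colim}, one has $L\cI(f')=H^0(\Spec B',\Z)/\Z$ by Lemma~\ref{LI-stalk}, and through Proposition~\ref{LU-LPic} both the stalk of $t_f$ and that of $\partial_f$ are induced by Bass' splitting of $Seq(U,B')$; so the identity $\partial_f\circ t_f=\mathrm{id}$ reduces there to the fact that $t_{B'}$ splits $U(B'[t,1/t])\twoheadrightarrow LU(B')$.
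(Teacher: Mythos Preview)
Your construction of $t_f$ is exactly the paper's: both observe that $q\circ f_*t_X$ kills the image of $\Z$ (by naturality of Bass' contraction along $A\hookrightarrow B$, i.e., $f_*t_X\circ c=\iota\circ t_S$) and hence factors through $f_*\Z/\Z$. So the proposal is correct.

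The only difference is in how you verify $\partial_f\circ t_f=\mathrm{id}$. You pass through $\mathcal H^0$ of the triangle \eqref{eq:Rp_*} (or, alternatively, through Nisnevich stalks), and you correctly flag the identification of $\mathcal H^0$ of the right-hand vertical arrow with the previously defined $\partial_f$ as the delicate point. The paper avoids this entirely: it writes down a single three-row diagram whose middle row is
\[
0\to p_*\cO^\times\to p_*f^T_*\cO^\times\to p_*(f^T_*\cO^\times/\cO^\times)\to R^1p_*\cO^\times
\]
and whose top and bottom rows are both $0\to\Z\to f_*\Z\to f_*\Z/\Z\to0$. The downward maps $t_S,\,f_*t_X$ induce $t_f$, and the further downward maps $\partial_S,\,f_*\partial_X$ induce $\partial_f$ on the same third column; since the composites in the first two columns are the identity, so is $\partial_f\circ t_f$. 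This sidesteps both the derived-category identification and the stalk computation, at the cost of implicitly asserting that the induced third-column map really is the $\partial_f$ defined just before the proposition --- which is immediate once one remembers that $\partial_f$ was obtained by sheafifying $\cI(f[t,1/t])\to L\cI(f)$, i.e., precisely by passing to quotients from $f_*\partial_X$.
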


\begin{proof}
Applying the left exact functor $p_*$ to 
$1\to\cO^\times\to f^T_*\cO^\times \to f^T_*\cO^\times/\cO^\times\to1$,
we get exactness of the middle row in the following
commutative diagram of sheaves on $S$.
\[ \xymatrix@1{
0\ar[r]&\Z  \ar[r] \ar[d]^{t_S} & f_*\Z \ar[r] \ar[d]^{f_*t_X} & 
(f_*\Z)/\Z \ar[r] \ar@{.>}[d]^{t_f}& 0 \\
0\ar[r]&p_*\cO^\times \ar[r]\ar[d]^{\partial_S}& 
p_*f^T_*\cO^\times \ar[r]\ar[d]^{f_*\partial_X} 
& p_*(f^T_*\cO^\times/\cO^\times) \ar[r]\ar[d]^{\partial_f} 
& R^1p_*\cO^\times \\
0\ar[r]&\Z  \ar[r]  & f_*\Z \ar[r]  & (f_*\Z)/\Z \ar[r]& 0
}
\]
(The top and bottom rows are tautologically exact.)
The maps $t_S$, $f_*t_X$ induce the map $t_f$; since $\partial_S\, t_S$
and $\partial_X\, t_X$ are the identity, so are
$(f_*\partial_X)(f_*t_X)$ and $\partial_f\, t_f$.
\end{proof}

\begin{proof}[Proof of Theorem \ref{thmSX}]
By Theorem \ref{LI=H^0}, we have $L\cI(f)\cong H_\et^0(S,f_*\Z/\Z)$.
By Proposition \ref{sheaf-split}, we have a natural section $t_f$
of the sheaf map $\partial_f$. By \ref{notation}, 
the global sections of the map $\partial_f$ is the map
$\cI(f[t,1/t])\to L\cI(f)$ in $Seq(\cI,f)$. Hence the global sections
of $t_f$ provide the required natural splitting.
\end{proof}

Here is an easy consequence of Theorem \ref{thmSX},
which is related to Lemma \ref{3rings-I}.

\begin{corollary}\label{3rings}
Suppose that $f:A\hookrightarrow B$ and $g:B\hookrightarrow C$
are extensions. Then there is a short exact sequence
\[ 1 \to L\cI(A,B) \to L\cI(A,C) \to L\cI(B,C). \]
More generally, given faithful affine maps $X\map{g}T\map{f}S$,
there is an exact sequence
\[ 1 \to L\cI(f) \to L\cI(fg) \to L\cI(g). \]
\end{corollary}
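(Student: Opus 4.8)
The plan is to deduce this corollary from Theorem \ref{thmSX} combined with the six-term exact sequence of Lemma \ref{3rings-I}, using the machinery of contracted functors developed in Section \ref{sec:prelim}. First I would observe that Lemma \ref{3rings-I} is itself natural in the pair of extensions, and that it arises by applying the left-exact global sections functor to the short exact sequence of Zariski (hence, by Lemma \ref{I=etaleH^0}, \'etale) sheaves on $\Spec(A)$
\[
1\to f_*\cO^\times_B/\cO^\times_A \to (fg)_*\cO^\times_C/\cO^\times_A
\to f_*\bigl(g_*\cO^\times_C/\cO^\times_B\bigr).
\]
Replacing the extensions $f$, $g$, $fg$ throughout by $f[t]$, $f[1/t]$, $f[t,1/t]$ (and similarly for $g$ and $fg$) produces the analogous sequences for the polynomial and Laurent polynomial extensions, all compatibly, so that $\cI$ applied to the three extensions sits in a commutative ladder of copies of $Seq(\cI,-)$.

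Next I would invoke the fact, already recorded in Proposition \ref{LU-LPic} and Theorem \ref{thmSX}, that $\cI(f)$, $\cI(fg)$ and $\cI(g)$ are all acyclic (indeed contracted) functors. Since Lemma \ref{3rings-I} exhibits $\cI(f)$ as the kernel of $\cI(fg)\to\cI(g)$, the first part of Lemma \ref{mor} applies: the left-exact sequence
\[
1\to\cI(f)\to\cI(fg)\to\cI(g)
\]
of contracted functors yields, upon applying the contraction operator $L$, the left-exact sequence
\[
1\to L\cI(f)\to L\cI(fg)\to L\cI(g).
\]
More precisely, writing $G$ for the image functor $\im(\cI(fg)\to\cI(g))$, one has short exact sequences $1\to\cI(f)\to\cI(fg)\to G\to1$ and $1\to G\to\cI(g)$; by Lemma \ref{mor}, $G$ is acyclic with $LG=\ker(L\cI(fg)\to L\cI(g))$ and simultaneously $LG\hookrightarrow L\cI(g)$, while $L\cI(f)=\ker(L\cI(fg)\to LG)=\ker(L\cI(fg)\to L\cI(g))$. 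Stringing these identifications together gives the asserted exactness.

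The more conceptual second route, which I would present as the actual argument, uses Theorem \ref{thmSX} directly: applying $L\cI$ to the three maps identifies $L\cI(f)=H^0_\et(S,f_*\Z/\Z)$, $L\cI(fg)=H^0_\et(S,(fg)_*\Z/\Z)$ and $L\cI(g)=H^0_\et(T,g_*\Z/\Z)=H^0_\et(S,f_*(g_*\Z/\Z))$. The short exact sequence of \'etale sheaves $1\to f_*\Z/\Z\to (fg)_*\Z/\Z\to f_*(g_*\Z/\Z)$ on $S$ — which comes from applying $f_*$ to $1\to\Z\to g_*\Z\to g_*\Z/\Z$ and splicing with $1\to\Z\to f_*\Z\to f_*\Z/\Z$, using that $f_*$ is left exact — then gives the desired exact sequence on taking $H^0_\et(S,-)$. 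The main obstacle, and the only point requiring care, is checking that the composite isomorphisms $L\cI\cong H^0_\et(\cdot,\cdot_*\Z/\Z)$ furnished by Theorem \ref{LI=H^0} identify the two maps $L\cI(f)\to L\cI(fg)\to L\cI(g)$ with the sheaf-level maps; this is a diagram chase comparing the canonical maps $a_f$, $a_{fg}$, $a_g$ of Theorem \ref{LI=H^0} against the functoriality of Lemma \ref{3rings-I}, which one settles by noting that both are induced by the same map of presheaves $U\mapsto L\cI(\cdot)$ before sheafification. The scheme-theoretic case $X\map{g}T\map{f}S$ is handled identically, replacing Lemma \ref{3rings-I} by its evident generalization to faithful affine maps (which holds by the same sheaf-theoretic proof, valid in the category of faithful affine morphisms as remarked after Notation).
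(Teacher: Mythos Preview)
Your second route is exactly the paper's proof: apply $f_*$ to the exact sequence $0\to\Z\to g_*\Z\to g_*\Z/\Z\to 0$ on $T$, observe that the resulting sequence $1\to (f_*\Z)/\Z\to (fg)_*\Z/\Z\to f_*(g_*\Z/\Z)$ of Nisnevich sheaves on $S$ is left exact, take global sections, and invoke Theorem \ref{thm:main}. The paper does not even pause over the compatibility check you flag, treating it as routine.

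Your first route, via Lemma \ref{3rings-I} and the contracted-functor machinery of Lemma \ref{mor}, is a legitimate alternative the paper does not take. It has the mild advantage of needing only the acyclicity of $\cI$ (Proposition \ref{LU-LPic}) rather than the full identification of $L\cI$ with an \'etale $H^0$, so it would already have been available at the end of Section \ref{sec:I}. One wrinkle in your write-up: the clause ``$LG=\ker(L\cI(fg)\to L\cI(g))$'' is misstated. From the short exact sequence $1\to\cI(f)\to\cI(fg)\to G\to 1$ one gets $LG=\coker\bigl(L\cI(f)\to L\cI(fg)\bigr)$, and from $G\hookrightarrow\cI(g)$ one gets $LG\hookrightarrow L\cI(g)$; combining these two facts yields $L\cI(f)=\ker\bigl(L\cI(fg)\to L\cI(g)\bigr)$, which is what you then correctly conclude.
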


\begin{proof}
Applying $f_*$ to the exact sequence
$0\to\Z\to g_*\Z\to (g_*\Z)/\Z\to 0$ on $T$ (or $\Spec(B)$) yields
the exact sequence of Nisnevich sheaves on $S$ (or $\Spec(A)$):
\[
1\to (f_*\Z)/\Z \to (fg)_*\Z/\Z \to f_*(g_*\Z/\Z).
\]
Now apply the left exact global sections functor and use 
Theorem \ref{thm:main}.
\end{proof}

\bigskip\goodbreak

\section{The Vanishing of $L\cI(A,B)$}\label{sec:vanish}

In this section, we discuss some conditions on $A\subset B$ under
which $L\cI(A,B)=0$. We begin by noting two elementary consequences
of the sheaf property of $f_*\Z/\Z$: (i) if $s,t\in A$ are
comaximal then $L\cI(A,B)\subset L\cI(A[\frac1s],B[\frac1s])\oplus 
L\cI(A[\frac1t],B[\frac1t])$, and 
(ii) if $L\cI(A_\wp,B_\wp)=0$ for every prime $\wp$ of $A$ then
$L\cI(A,B)=0$. The converse does not hold:

\begin{example}
If $A=\C[x]$ and $B=\C[x,y]/(y^2-x^2)$ then $L\cI(A,B)=0$,
but if $\wp\ne xA$ we have $L\cI(A_\wp,B_\wp)=\Z$
(use Proposition \ref{LU-LPic}).

If $A=k[s,s^{-1}]$ and $B=k[x,x^{-1}]$ with $s=x^2$ then
$L\cI(f)=0$ but $L\cI(A_\wp,B_\wp)=\Z$ for every nonzero
prime $\wp$ of $A$. Thus $(f_*\Z)/\Z$ is nonzero; its stalk is 
$\Z$ at any closed point, but is~0 at the generic point.
\end{example}

A simple necessary condition for $L\cI(f)$ to vanish is for the
Nisnevich sheaf $f_*\Z/\Z$ to vanish. It is not enough for
the Zariski sheaf $f_*\Z/\Z$ to vanish; Example \ref{node} shows 
that even if $A$ is a local ring we can have $(f_*\Z/\Z)_\zar=0$
and $H^0_\zar(A,f_*\Z/\Z)=0$ but
$L\cI(f)=H^0_\nis(A,f_*\Z/\Z)\ne0$. 

For finite morphisms, we have a simple criterion.
We say that a map $f:X\to S$ is {\it connected} if it for every point
$s$ of $S$, the fiber $X_s=f^{-1}(s)$ is connected or empty.
For a map $\Spec(B)\to\Spec(A)$, this means that for each prime ideal
$\wp$ of $A$ either there is no prime of $B$ over $\wp$ or else
the fiber ring $B\otimes_A k(\wp)$ is connected.


\begin{proposition}\label{connected}
Suppose that $f:X\to S$ is finite. Then

(a) the Nisnevich sheaf $(f_*\Z)/\Z$
is zero if and only if $f$ is connected.

(b) If $f$ is connected then $L\cI(f)=0$.  
\end{proposition}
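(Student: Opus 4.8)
The plan is to prove part (a) by a local, stalkwise analysis of the Nisnevich sheaf $(f_*\Z)/\Z$, and then to deduce part (b) immediately from part (a) together with Theorem \ref{thm:main}. Since the Nisnevich sheaf $(f_*\Z)/\Z$ is zero if and only if its stalks at the henselizations $A^h_\wp$ vanish for every prime $\wp$ of $A=\Gamma(S,\cO_S)$ (reduced to the affine case using Lemma \ref{colim} and the fact that finiteness is affine-local), and the stalk of $f_*\Z$ at $A^h_\wp$ is $H^0(\Spec B^h_\wp, \Z)$ where $B^h_\wp = B\otimes_A A^h_\wp$, the statement reduces to: for a hensel local ring $R$ and a finite $R$-algebra $C$, one has $H^0(\Spec C,\Z)/\Z = 0$ if and only if the closed fiber $C/\mathfrak m C$ is connected (or $C=0$). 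This is where the hensel hypothesis does the work: a finite algebra over a hensel local ring is a finite product of hensel local rings $C = \prod_{j} C_j$, and this decomposition is in bijection (via $C_j \mapsto C_j/\mathfrak m C_j$) with the connected components of $\Spec(C/\mathfrak m C)$, because idempotents lift along the surjection $C\to C/\mathfrak m C$ and $\Spec C$, $\Spec(C/\mathfrak m C)$ have the same underlying topological space up to the nilpotents. Hence $H^0(\Spec C,\Z)$ is free of rank equal to the number of connected components of the closed fiber, so $H^0(\Spec C,\Z)/\Z=0$ exactly when that number is $\le 1$, i.e.\ the closed fiber is connected or empty.

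Concretely, I would first record that for a finite morphism $f:X\to S$ and a point $s\in S$ with henselization $\cO_{S,s}^h$, the fiber $X\times_S \operatorname{Spec}\cO_{S,s}^h$ is a finite disjoint union of spectra of hensel local rings, one summand for each connected component of the geometric-topological fiber $X_s=f^{-1}(s)$; this is a standard consequence of the structure theory of finite algebras over hensel local rings (e.g.\ \cite[Milne, I.4]{Milne}). Then the stalk of $f_*\Z$ at $s$ in the Nisnevich topology is $H^0$ of this fiber, namely $\Z^{c(s)}$ where $c(s)$ is the number of connected components of $X_s$ (with $c(s)=0$ if $X_s$ is empty), and the stalk of $(f_*\Z)/\Z$ is $\Z^{c(s)}/\Z$, which vanishes precisely when $c(s)\le 1$. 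Running over all $s\in S$ gives part (a).

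For part (b): if $f$ is connected then by (a) the Nisnevich sheaf $(f_*\Z)/\Z$ is zero, so in particular its global sections vanish; by Theorem \ref{thm:main} (equivalently Theorem \ref{thmSX}) we have $L\cI(f)=H^0_{\nis}(S,(f_*\Z)/\Z)=0$.

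The main obstacle I expect is making the stalk computation of $f_*\Z$ at the Nisnevich point clean: one must be careful that ``connected fiber'' in the statement refers to the topological fiber $X_s$ (equivalently $\operatorname{Spec}$ of the fiber ring modulo nilpotents), and that passing to the henselization does not create or destroy connected components of the fiber — this is exactly the content of the hensel structure theorem, and the reduction to the affine, finitely-generated case via Lemma \ref{colim} needs to be stated so that finiteness is preserved under the filtered colimit. Everything else is bookkeeping with idempotents and $H^0(-,\Z)$ counting connected components.
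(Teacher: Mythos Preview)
Your proposal is correct and follows essentially the same route as the paper: reduce to Nisnevich stalks, use that a finite algebra over a hensel local ring splits as a product of hensel local rings (citing \cite[I.4]{Milne}), identify the number of factors with the number of connected components of the closed fiber via $B/\wp B = B'/\wp B'$, and deduce (b) from (a) using Theorem~\ref{thmSX}. One small cleanup: your invocation of Lemma~\ref{colim} is misplaced---that lemma is about filtered colimits, not Zariski localization, and no reduction to the finitely-generated case is needed here; the stalkwise check is already local on $S$, and finiteness of $f$ is preserved under base change to $A^h_\wp$ automatically.
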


\begin{proof}
Since the problem is local in $S$, we may suppose that $S=\Spec(A)$ and
$X=\Spec(B)$, with $A$ a local ring. Let $A^h$ be the henselization 
of $A_\wp$, and set $B'=B\otimes_A A^h$. 
Since $f$ is finite, $B'$ is a product of $n\ge1$ hensel local rings
$B_i$, each finite over $A^h$; see  \cite[1.4.2]{Milne}.
Since $B/\wp B = B'/\wp B' = \prod B_i/\wp B_i$, the fiber of $f$
at $\wp$ has $n$ components, and is connected iff $n=1$, i.e., iff
$B'$ is hensel local. Since the stalk of $f_*\Z/\Z$ at $\wp$
is zero iff $B'$ is hensel local, the result follows.
\end{proof}

\begin{examples}\label{X-node}
The hypothesis in \ref{connected} that $f$ be finite is necessary. 

(a) If $A=\Z$ and $B=\Z[\frac1p]\times\Z/p$ then $f:\Spec(B)\to\Spec(A)$
is quasi-finite and connected, but $L\cI(f)=\Z$ by 
Corollary \ref{LPic.injects}. Here $f_*\Z/\Z$ is a skyscraper
sheaf (at $p$).

(b) If $A$ is the coordinate ring $k[x,y]/(y^2-x^3-x^2)$ of the node,
and $B=A[1/x]$ then $A\subset B$ is \'etale and connected, yet
$L\cI(A,B)\cong L\Pic(A)\ne0$ by Corollary \ref{LPic.injects}. In this case,
$f_*\Z/\Z$ is the skyscraper sheaf $\Z$ at the nodal point.

(c) If $A=k[x]$ and $B=A[b,e]/(e^2-e-bx)$ then $f$ is not connected,
as $B/xB\cong k[b]\times k[b]$.
On the other hand, $f_*\Z/\Z=0$ and hence $L\cI(f)=0$. 
In fact, if $\wp\ne xA$ then $B\otimes A^h\cong A^h[e]$.
In this case, $f$ has relative dimension~1.
\end{examples}


\begin{examples}\label{not-conn}
Even if $f$ is finite but not connected we may still have $L\cI(f)=0$.

a) $L\cI(\R[x],\C[x])=0$, but $\R[x]\subset\C[x]$ is not connected.
In fact, $f_*\Z/\Z$ is nonzero exactly at those primes $\wp$ with
$\R[x]/\wp\cong\C$. This example shows that the rank of
$f_*\Z/\Z$ is not semicontinuous.

b)
If $A=k[x]$ and $B=k[x,y]/(y^2=x^3+x^2)$ is the coordinate ring of the node
then $L\cI(f)=H^0(\Spec(A),f_*\Z/\Z)=0$ but $(f_*\Z)/\Z$ is nonzero because
the stalk is $\Z$ at every point except at $x=0,-1$ and 
at the generic point (where the stalks are~0).
\end{examples}

We now turn to the connection between $L\cI$ and seminormalization.

\begin{definition}\label{def:sn} (Swan \cite[\S2]{swan}) 
An extension $A\subset B$ is {\it subintegral} if $B$ is integral over $A$, 
and $\Spec(B)\to\Spec(A)$ is a bijection inducing isomorphisms on all 
residue fields. 

We say that $A$ is {\it seminormal} in $B$ if whenever 
$b\in B$ and $b^{2}, b^{3}\in A$ then $b\in A$. 
The {\it seminormalization} of $A$ in $B$
is the largest subring $\scA_B$ of $B$ which is subintegral over $A$.
By \cite[2.5]{swan}, $\scA_B$ is seminormal in $B$.
\end{definition}

These notions extend to faithful affine maps of schemes in the evident way; 
the seminormalization of $S$ in $X$ may be constructed by gluing
together the seminormalizations on each affine open.
We omit the details.

\begin{subremark}\label{NI=0}
The condition that $N\cI(A,B)=0$ is equivalent to
$A$ being seminormal in $B$, and implies that $N^i\cI(A,B)=0$ 
for all $i>0$; this was proven in \cite[1.5, 1.7]{ss}.
More generally, a faithful affine map $f:X\to S$ is seminormal
if and only if $N\cI(f)=0$. This follows
from the affine case, since both $N\cI$ and 
seminormality are Zariski-local on $S$.
\end{subremark}

\begin{proposition}\label{LI-sn}
$L\cI(A,\scA_B)=0$ and 
$L\cI(A,B)\cong L\cI(\scA_B,B)$.  
\end{proposition}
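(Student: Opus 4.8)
The plan is to deduce both statements from the combination of Corollary \ref{3rings} and a vanishing result for the subintegral extension $A\subset\scA_B$. First I would apply Corollary \ref{3rings} to the tower $A\subset\scA_B\subset B$ (i.e.\ to $f:\Spec\scA_B\hookrightarrow\Spec A$ and $g:\Spec B\hookrightarrow\Spec\scA_B$), obtaining the exact sequence
\[
1\to L\cI(A,\scA_B)\to L\cI(A,B)\to L\cI(\scA_B,B).
\]
Granting $L\cI(A,\scA_B)=0$, the map $L\cI(A,B)\to L\cI(\scA_B,B)$ is injective; so the content of the second assertion is that this map is also \emph{surjective}. For surjectivity I would exhibit a splitting in the other direction coming from the functoriality of $L\cI$ applied to the inclusion of pairs $(\scA_B,B)\hookrightarrow(A,B)$—or, more cleanly, note that $\scA_B$ and $A$ have the same spectrum with the same residue fields (subintegrality), hence $(f_*\Z)/\Z$ on $\Spec A$ and $(g')_*\Z/\Z$ on $\Spec\scA_B$ (where $g':\Spec B\to\Spec\scA_B$) have literally the same stalks at corresponding points, so Theorem \ref{thm:main} identifies $L\cI(A,B)=H^0_\et(\Spec A,(\,\text{composite})_*\Z/\Z)$ with $H^0_\et(\Spec\scA_B,(g')_*\Z/\Z)=L\cI(\scA_B,B)$.

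The main work, then, is the first statement $L\cI(A,\scA_B)=0$. By Theorem \ref{thm:main} this is equivalent to $H^0_\et(\Spec A,(h_*\Z)/\Z)=0$ where $h:\Spec\scA_B\to\Spec A$, and since $f_*\Z/\Z$ is a Nisnevich sheaf it suffices (as in the proof of Lemma \ref{LI-stalk}) to check vanishing on $\Spec A^h_\wp$ for every prime $\wp$, i.e.\ to show $H^0(\Spec(\scA_B\otimes_A A^h_\wp),\Z)/\Z=0$. Because $A\subset\scA_B$ is subintegral—in particular integral and inducing a bijection on spectra with trivial residue extensions—the base change $\scA_B\otimes_A A^h_\wp$ is again subintegral over the hensel local ring $A^h_\wp$; an integral extension of a hensel local ring that is a bijection on Spec is itself local, so its spectrum is connected and $H^0(\Spec(-),\Z)=\Z$. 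This forces the stalk to vanish, hence $L\cI(A,\scA_B)=0$.

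I expect the hensel-local base-change step to be the only real obstacle: one must be a bit careful that subintegrality is preserved under $-\otimes_A A^h_\wp$ (integrality is clear, and the bijection-on-Spec and residue-field statements follow because $A^h_\wp$ is a filtered colimit of étale $A$-algebras and $\cI$, hence this whole picture, commutes with filtered colimits by Lemma \ref{colim}). An alternative, perhaps slicker, route to $L\cI(A,\scA_B)=0$ avoids henselization entirely: Remark \ref{NI=0} gives $N\cI(A,\scA_B)=0$ since $A$ is seminormal in $\scA_B$ (it equals its own seminormalization there), and one could instead try to show directly that a subintegral extension has trivial $L\cI$ by using the sequence of Proposition \ref{LU-LPic} together with the known vanishing of $L\Pic$ and the identification of $LU$; but this seems to require knowing $U(\scA_B)\to$ something behaves well, so I would fall back on the étale-sheaf argument above as the primary proof. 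Finally, the displayed $L\cI(A,B)\cong L\cI(\scA_B,B)$ is then immediate from the two facts just assembled.
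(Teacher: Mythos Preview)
Your argument for $L\cI(A,\scA_B)=0$ via the vanishing of the Nisnevich stalks of $(h_*\Z)/\Z$ is correct and is a genuinely different route from the paper. The paper instead reads this off from the exact sequence of Proposition~\ref{LU-LPic}: since $\Spec(\scA_B)\to\Spec(A)$ is a bijection one has $LU(A)\cong LU(\scA_B)$, and $L\Pic(A)\cong L\Pic(\scA_B)$ by \cite[5.4]{wei}; the five-term sequence then forces $L\cI(A,\scA_B)=0$. Your approach has the virtue of being internal to the paper (it uses only Theorem~\ref{thm:main} and Lemma~\ref{LI-stalk}), whereas the paper's version imports the $L\Pic$ computation from \cite{wei}. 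You even sketch the paper's route as your ``alternative'' at the end.

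The surjectivity half of $L\cI(A,B)\cong L\cI(\scA_B,B)$ is where your proposal has gaps. Your first suggestion, a splitting ``coming from the functoriality of $L\cI$ applied to the inclusion of pairs $(\scA_B,B)\hookrightarrow(A,B)$,'' does not work: there is no morphism of ring extensions in that direction, since it would require $\scA_B\subset A$. Your second suggestion, that the two quotient sheaves ``have literally the same stalks at corresponding points'' and hence the same $H^0$, is morally right but incomplete as stated: equal stalks for sheaves living on two different sites do not by themselves give an isomorphism of global sections. What you need is that the subintegral map $\Spec(\scA_B)\to\Spec(A)$ is a universal homeomorphism, hence induces an equivalence of \'etale (and Nisnevich) topoi; then $f_*$ is exact, $f_*\Z=\Z$ (which you did prove at the stalk level), and $f_*(g'_*\Z/\Z)=\pi_*\Z/\Z$, so $H^0_\et$ agrees. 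If you make that step explicit, your argument goes through. The paper sidesteps this entirely: it cites \cite[4.1]{vs} to get that $\cI(A[t,1/t],B[t,1/t])\to\cI(\scA_B[t,1/t],B[t,1/t])$ is surjective, hence so is the map on $L\cI$, and then combines this with injectivity from Corollary~\ref{3rings} and the first assertion.
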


\begin{proof}
The first assertion follows from Proposition \ref{LU-LPic},
since $LU(A)=LU(\scA_B)$ (because $\Spec(A)\to\Spec(\scA_B)$ is a bijection)
and $L\Pic(A)=L\Pic(\scA_B)$, by \cite[5.4]{wei}. (The hypothesis
in \cite[5.4]{wei} that $A$ be reduced is not needed in its proof.)
\edit{not needed!}

Now $\cI(A[t,1/t],B[t,1/t])\to\cI(\scA_B[t,1/t],B[t,1/t])$
is onto by \cite[4.1]{vs}. Hence the map $L\cI(A,B)\to L\cI(\scA_B,B)$
is onto. By Corollary \ref{3rings}, the kernel is $L\cI(A,\scA_B)=0$.
\end{proof}

\begin{definition} (Asanuma)
A ring extension $A\subset B$ is called {\it anodal} 
if every $b\in B$ such that $(b^{2}- b)\in A$ and $(b^{3}- b^{2})\in A$ 
belongs to $A$. 
\end{definition}

If $A\subset B$ is anodal then every idempotent of $B$ belongs to $A$,
so $H^0(A,\Z)=H^0(B,\Z)$. In particular, every anodal extension of
a domain is connected. If $A$ is a field then
$A\subset B$ is anodal if and only if $B$ is connected.
The first author proved that the composition of anodal extensions 
is anodal; see \cite[3.1]{vs}. 

The following result generalizes a result of Asanuma 
(see \cite[3.4]{wei}), who considered the case $B=\textrm{frac}(A)$,
as well as several results of the first author in \cite{vs}.

\begin{theorem}\label{ano}
Let $A\subset B$ be an extension. 
 \begin{enumerate}[(1)]
 \item If $L\cI(A, B)=0$ then $A\subset B$ is anodal.
 \item If $A$ is a 1-dimensional domain, and $A\subset B$ is
an integral, birational and anodal extension, 
then $L\cI(A,B)=0$.
\end{enumerate}
\end{theorem}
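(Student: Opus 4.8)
The plan is to prove part~(1) by contraposition, reducing to a single idempotent, and part~(2) by killing the conductor so as to land in dimension~$0$.

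For~(1), suppose $A\subset B$ is not anodal and choose $b\in B\setminus A$ with $a:=b^2-b\in A$ and $b^3-b^2\in A$. First I would replace $B$ by $C:=A[b]$; since $b^2=b+a\in A+Ab$ one has $C=A+Ab$, and by Corollary~\ref{3rings} it suffices to prove $L\cI(A,C)\neq 0$. The point is that $(b^2-b)C=(b^2-b)A+(b^3-b^2)A\subseteq A$, so $b^2-b$ lies in the conductor $\mathfrak d=\{x\in C:xC\subseteq A\}$, which is an ideal of $C$ contained in $A$. By Lemma~\ref{MV}, $L\cI(A,C)=L\cI(A/\mathfrak d,\,C/\mathfrak d)$, and modulo $\mathfrak d$ the image $e$ of $b$ is an idempotent which is \emph{not} in $A/\mathfrak d$ (because $b\notin A$ and $\mathfrak d\subseteq A$), while $C/\mathfrak d=(A/\mathfrak d)[e]$. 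So the whole question reduces to: if $\bar A\subset\bar A[e]$ with $e^2=e$ and $e\notin\bar A$, then $L\cI(\bar A,\bar A[e])\neq 0$. For this I would write $\bar A[e]=\bar A/\mathfrak a_1\times\bar A/\mathfrak a_2$ with $\mathfrak a_1=\operatorname{Ann}_{\bar A}(e)$ and $\mathfrak a_2=\operatorname{Ann}_{\bar A}(1-e)$, observe that $\mathfrak a_1\mathfrak a_2=0$ but $\mathfrak a_1+\mathfrak a_2\neq\bar A$ (otherwise the Chinese Remainder Theorem gives $\bar A\cong\bar A/\mathfrak a_1\times\bar A/\mathfrak a_2=\bar A[e]$, forcing $e\in\bar A$), and deduce that $LU(\bar A)=H^0(\Spec\bar A,\Z)\to LU(\bar A[e])=H^0(\Spec\bar A[e],\Z)$ is not surjective: the locally constant function which is $1$ on the first factor and $0$ on the second cannot be pulled back from $\Spec\bar A$, since $V(\mathfrak a_1)$ and $V(\mathfrak a_2)$ cover $\Spec\bar A$ but are not disjoint. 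The exact sequence of Proposition~\ref{LU-LPic} then yields $L\cI(\bar A,\bar A[e])\neq 0$.

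For~(2), I would first use Lemma~\ref{colim}, together with the trivial fact that any intermediate ring of an integral (resp.\ birational, resp.\ anodal) extension is again integral (resp.\ birational, resp.\ anodal), to reduce to the case that $B$ is finite over $A$. Then $B/A$ is a finitely generated torsion $A$-module, so the conductor $\mathfrak d=\{x\in B:xB\subseteq A\}$ is a nonzero ideal of $B$ contained in $A$, and Lemma~\ref{MV} gives $L\cI(A,B)=L\cI(A/\mathfrak d,\,B/\mathfrak d)$. Since $A$ is a $1$-dimensional domain and $\mathfrak d\neq 0$, the rings $A/\mathfrak d$ and $B/\mathfrak d$ are $0$-dimensional, so $L\Pic$ of each vanishes, and Proposition~\ref{LU-LPic} reduces the claim to showing that $H^0(\Spec A/\mathfrak d,\Z)\to H^0(\Spec B/\mathfrak d,\Z)$ is an isomorphism. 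The morphism $\Spec(B/\mathfrak d)\to\Spec(A/\mathfrak d)$ is surjective ($B$ is integral over $A$), and I claim the anodal hypothesis forces it to be injective. Indeed, if two distinct primes $\mathfrak n_1,\mathfrak n_2$ of $B$ lie over a single prime $\mathfrak m\supseteq\mathfrak d$ of $A$, then in the $0$-dimensional ring $B/\mathfrak d$ they are separated by an idempotent $\bar e$; lifting $\bar e$ to $b\in B$ gives $b^2-b\in\mathfrak d\subseteq A$ and $b^3-b^2=b(b^2-b)\in\mathfrak d\subseteq A$, while $b\notin A$ because any idempotent of $A/\mathfrak d$ reduces to the \emph{same} element modulo $\mathfrak n_1$ and modulo $\mathfrak n_2$ (both contract to $\mathfrak m$), contradicting that $\bar e$ separates them. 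This contradicts anodality, so $\Spec(B/\mathfrak d)\to\Spec(A/\mathfrak d)$ is a bijection, hence a homeomorphism of these spectral spaces, giving the desired isomorphism on $H^0(-,\Z)$; therefore $L\cI(A,B)=0$.

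Most of the above is routine diagram- and ideal-chasing. The two substantive points are the reduction of~(1) modulo the conductor to a single idempotent, which isolates exactly the obstruction measured by $LU$, and the idempotent-lifting argument of~(2) showing that an anodal finite extension has connected fibres over the conductor. I expect the latter to be the main obstacle: one has to verify that the lifted idempotent genuinely fails to lie in $A$, and this is precisely where the shape of Asanuma's conditions $b^2-b,\ b^3-b^2\in A$ matters — both hold automatically once $b^2-b$ lies in an ideal of $B$ contained in $A$, which is why the conductor is the right thing to quotient by. The hypothesis $\dim A=1$ enters only to ensure that killing a nonzero conductor produces $0$-dimensional rings, on which $L\Pic$ vanishes; Example~\ref{2 dim} shows this mechanism fails in higher dimension.
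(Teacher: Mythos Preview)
Your proof is correct and follows essentially the same route as the paper's: for~(1) both arguments pass to $C=A[b]$, kill the ideal $(b^2-b)C$ (or the full conductor, which works equally well), and use that the image of $b$ is a nontrivial idempotent to contradict the isomorphism $LU(A/\ca)\cong LU(C/\ca)$ forced by $L\cI=0$; for~(2) both reduce to finite $B$ via Lemma~\ref{colim}, pass to the conductor quotient via Lemma~\ref{MV}, and use $L\Pic=0$ in dimension~$0$ together with Proposition~\ref{LU-LPic}. The only real difference is that where the paper cites \cite[3.6]{wei} to say that $A/\cc\subset B/\cc$ is still anodal (hence has connected fibres), you supply the idempotent-lifting argument yourself: a lift $b$ of a separating idempotent satisfies $b^2-b,\,b^3-b^2\in\cc\subset A$, so anodality forces $b\in A$, contradicting separation. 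This is exactly the content of the cited result specialised to the case at hand, so the two proofs are really the same.
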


Example \ref{X-node}(b) 
shows that the integral hypothesis is necessary in
Theorem \ref{ano}(2).  Example \ref{2 dim} shows 
that not all integral, birational anodal extensions have $L\cI(f)\!=\!0$.

\begin{proof} (cf.\ Onoda-Yoshida \cite[1.10]{OY})
Let $b\in B$ be such that $b^2-b, b^3-b^2$
are in $A$; we need to show that $b\in A$. Consider the
finite subring $C=A[b]$ of $B$.  
If $L\cI(A,B)=0$, then $L\cI(A,C)=0$ by Corollary \ref{3rings}.
Let $\ca$ denote the ideal $(b^2-b)C$ of $C$; 
it is also an ideal of $A$, so $L\cI(A/\ca,C/\ca)=0$
by Lemma \ref{MV}. By Proposition \ref{LU-LPic}, 
this implies that $H^0(A/\ca,\Z)\cong H^0(C/\ca,\Z)$.
Since the image $\bar{b}$ of $b$ is idempotent in $C/\ca$,
this forces $\bar{b}\in A/\ca$ and hence $b\in A$,
proving (1).

(2) Now suppose that $A$ is a 1-dimensional domain, and that
$A\subset B$ is an integral, birational and anodal extension.
Since $B$ is the union of finite $A$-algebras
$B_\lambda$, all of which are anodal over $A$,
we may assume that $B$ is a finite $A$-algebra 
by Lemma \ref{colim}. Since $A\subset B$ is finite and birational, 
the conductor ideal $\cc$ is nonzero, so $\dim A/\cc=0$. 
By \cite[3.6]{wei}, the extension $\bar{f}:A/\cc \subset B/\cc$
is anodal because $A\subset B$ is. In particular,
$\bar{f}$ is connected. Since $\dim(A/\cc)=0$, 
$L\Pic(A/\cc)=0$ (by \cite[1.6.1]{wei}) and hence $L\cI(\bar{f})=0$ by
Proposition \ref{LU-LPic}. By Lemma \ref{MV},
$L\cI(A,B)=L\cI(A/\cc,B/\cc)=0$.
\end{proof}

\goodbreak

Here is an example of a 2-dimensional integral, birational and
anodal extension with $L\cI(A,B)\ne0$, 
Thus Theorem \ref{ano}(2) does not extend to $\dim(A)>1$. 

\begin{example}\label{2 dim}
Let $X$ be the coordinate axes in the plane, and $f:X\to S$ the
quotient identifying each axis with the normalization of the node $S$.
Consider the pushout $S\to S'=\Spec(A)$ of the tautological inclusion
of $X$ in $\mathbb{A}^2=\Spec(B)$ along $f$.

The map $\mathbb{A}^2\to S'$ is the map $\Spec(B)\to\Spec(A)$ of
Example 3.5 in \cite{wei}. By construction, $A$ is a 2-dimensional 
domain whose integral closure is $B=k[x,y]$, so 
$A\subset B$ is an integral, birational extension. 
It is shown in  \cite[3.5.2]{wei} that $A\subset B$ is anodal
and $L\Pic A=\Z$. Since $B$ is normal, $L\Pic B=0$.
Since $A$, $B$ are domains, $LU(A)=LU(B)=\Z$ 
(by Example \ref{ex:Unil}).
By Proposition \ref{LU-LPic}, $L\cI(A,B)\cong L\Pic(A)\cong\Z$.
\end{example}

\end{document}